\renewcommand{\emptyset}{\varnothing}
\newcommand{\E}{\mathbf{E}}
\renewcommand{\P}{\mathbf{P}}
\newcommand{\R}{\mathbb{R}}
\newcommand{\RR}{\mathbb{R}}
\newcommand{\1}{\mathbbm{1}}
\newcommand{\indc}{\mathds{1}}
\newcommand{\Z}{\mathbb{Z}}
\newcommand{\norm}[1]{\left|\left|#1\right|\right|}
\newcommand{\N}{\mathbb{N}}
\newcommand{\cons}{\sigma}
\DeclareMathOperator{\Bet}{Beta}
\numberwithin{equation}{section}
\newtheorem{thm}{Theorem}[section]
\newtheorem{lemma}[thm]{Lemma}
\newtheorem{prop}[thm]{Proposition}
\newtheorem{cor}[thm]{Corollary}
\theoremstyle{remark}
\newtheorem{remark}[thm]{Remark}
\theoremstyle{definition}
\newtheorem{define}[thm]{Definition}
\newcommand{\ve}{\varepsilon}
\newcommand{\vp}{\varphi}
\newcommand{\al}{\alpha}
\newcommand{\cT}{\mathcal{T}}
\newcommand{\cL}{\mathcal{L}}
\newcommand{\rC}{\mathrm{C}}
\newcommand{\rD}{\mathrm{D}}
\newcommand{\acm}{\ensuremath{\mathrm{ACM}(m,q,r,\mu)}}
\newcommand{\tmesh}{\Delta_t}
\newcommand{\xmesh}{\Delta_x}
\newcommand{\eps}{\varepsilon}
\newcommand{\constwo}{R}
\newcommand{\gfunc}{g}
\newcommand{\hfunc}{h}
\title[]{Asymmetric cooperative motion in one dimension}
\author[]{Louigi Addario-Berry}
\address{Department of Mathematics and Statistics, McGill University}
\email{louigi.addario@mcgill.ca}
\author[]{Erin Beckman}
\address{Department of Mathematics and Statistics, Concordia University}
\email{erin.beckman@concordia.ca}
\author[]{Jessica Lin}
\address{Department of Mathematics and Statistics, McGill University}
\email{jessica.lin@mcgill.ca}
\begin{document}

\subjclass[2010]{Primary: 60F05, 60K35; Secondary: 65M12, 35F21, 35F25} 
\keywords{recursive distributional equations, monotone finite difference schemes, monotone couplings}

\begin{abstract}
We prove distributional convergence for a family of random processes on $\Z$ which we call asymmetric cooperative motions. The model generalizes the ``totally asymmetric q-lazy hipster random walk'' introduced in \cite{Addario-Berry2019}. We present a novel approach based on 
connecting a temporal recurrence relation satisfied by the cumulative distribution functions of the process to the theory of finite difference schemes for Hamilton-Jacobi equations \cite{Crandall1984}. We also point out some surprising lattice effects that can persist in the distributional limit, and propose several generalizations and directions for future research.
\end{abstract}

\maketitle

\section{Introduction}\label{s.intro}

\subsection{Description of the model and the main result}\label{ss.des}

The purpose of this paper is to prove distributional convergence for a family of random processes we term {\em cooperative motions}, and in so doing develop the connection, introduced in \cite{Addario-Berry2019}, between convergence of recursive distributional equations and numerical approximation of PDEs. The image underlying the name ``cooperative motion'' is this: a walker is attempting to perform a random walk on $\Z$ with initial distribution $\mu$ and step distribution $\nu$, but at each step needs the help of some fixed number $m\ge 1$ other individuals (independent walkers) in order to move. If $m$ additional, independent copies of the process all find themselves at the same location as the first walker, then the walker succeeds in taking a $\nu$-distributed step; otherwise it stays put.

One natural probabilistic formulation of this model is as a tree-indexed random process. Let $\cT$ be the complete rooted $(m+1)$-ary tree, with root labeled by $\emptyset$ and node $v$ having children $(vi,1 \le i \le m+1)$, so nodes at distance $d$ from the root are labeled by strings $c_1c_2\ldots c_d \in \{1,\ldots,m+1\}^d$. 
Write $\cT_n$ for the subtree of $\cT$ containing only nodes at distance $\le n$ from the root, and write $\cL_n$ for the leaves of $\cT_n$. 

Next, fix a probability distribution $\mu$ on $\Z \cup \{-\infty, \infty\}$ --- it turns out to be useful to allow $\pm \infty$ as initial positions --- 
and let $\rC^n=(C^n_v,v \in \cL_n)$ be iid, $\mu$-distributed random variables. Let $\rD=(D_v,v \in \cT_n\setminus \cL_n)$ be a second collection of iid $\Z$-valued random variables with some law $\nu$, independent of the  variables $C_v$. We may view $\cT_n$ as computing a function with inputs at the leaves, given by $\rC$, and output at the root, using as auxiliary randomness the elements of $\rD$, as follows. For $v \in \cT_n\setminus \cL_n$, recursively define
\[
C^n_v = 
\begin{cases}
C^n_{v1}+D_v & \mbox{ if }C_{v1}=C_{v2}=\ldots=C_{v(m+1)}\, ,\\	
C^n_{v1}		&\mbox{ otherwise.}
\end{cases}
\]
The output of the function is the random variable $C^n_{\emptyset}$ indexed by the root; this random variable has the distribution of a cooperative motion at time $n$. 

There is a second formulation, which is slightly less visual but is easier to work with, and is the one we use for the rest of the paper. 
Let $(D_n,n \ge 0)$ be a collection of independent, identically distributed integer random variables with common law $\nu$. Define a sequence $(X_n,n \ge 0)$ of extended real random variables as follows. Let $X_0$ be chosen according to $\mu$. For $n \ge 0$, let  $(\tilde{X}^i_n,1 \le i \le m)$ be independent copies of $X_n$, and set 
\begin{equation}
X_{n+1} = \begin{cases} X_n + D_n &\text{if } X_n = \tilde{X}^i_n \text{ for all } i= 1 \dots m,\\
X_n &\text{if } X_n \neq \tilde{X}^i_n \text{ for some } i \, ,
\end{cases}
\label{eq:GHRWRecursionDefn}
\end{equation}
where we use the convention that $\infty+r=\infty$ and $-\infty+r=-\infty$ for $r \in \mathbb{Z}$. With this definition, 
the random variable $X_n$ has the same distribution as $C^n_\emptyset$ defined as the output of the tree-indexed process. 

In fact, the second formulation can be alternatively defined using a tree-indexed process, but indexed by an $(m+1)$-ary {\em canopy} tree. This is an infinite tree containing a  distinguished one-way infinite path $(v_n,n \ge 0)$, such that after removing the edge $v_nv_{n+1}$, the connected component containing $v_n$ is a complete $(m+1)$-ary tree of depth $n$. 
Adapting the first formulation to the canopy tree in the natural way, the quantity $X_n$ may then be interpreted as the value of the cooperative motion at node $v_n$. 

The entries of the sequence $(X_n,n \ge 0)$ are nicely coupled --- for all $n$ we have $X_{n+1}-X_n \in \{0,D_n\}$ --- so the process may be viewed as a type of random walk with delay. However, the amount of the delay is tied to the law of the process itself, since if $X_n$ finds itself in an unlikely location, then the odds that $\tilde{X}_n^1,\ldots,\tilde{X}_n^m$ are all equal to $X_n$ are low. As such, the position and the rate of motion are highly dependent upon each other, which is the primary challenge in analyzing the process. 

We are unable to characterize all possible asymptotic behaviours of cooperative motion processes, but we do so for a special class of processes we call {\em asymmetric, $q$-lazy cooperative motions}, where the step size distribution corresponds to lazy asymmetric simple random walk. That is, we choose the step sizes $D_n$ to be distributed in the following way. Fix $0 < q \le 1$ and $r \in [0,1]$ with $r \ne 1/2$, then let 
\begin{equation}\label{eq:dn_def}
D_n = 	\begin{cases} 
			1 & \mbox{ with probability } rq\\
			0 & \mbox{ with probability } 1-q\\
			-1 & \mbox{ with probability } (1-r)q. 
		\end{cases}
\end{equation}
We require that $r$ and $q$ are not both equal to one (or else the process is deterministic). 
We write $\acm$ for the law of the process $(X_n, n \geq 0)$ with step sizes $D_n$ as in \eqref{eq:dn_def}, when started from initial distribution $\mu$.

Our main result, Theorem \ref{thm:main2}, shows that after rescaling, all $\acm$ processes are asymptotically Beta-distributed.  
Define for the duration of the paper
\begin{equation*}
\cons = (2r-1)q,
\end{equation*}
and write $\text{sign}(\cons)=\indc_{\{\cons > 0\}}-\indc_{\{\cons < 0\}}$.

\begin{thm}\label{thm:main2} Fix an integer $m \ge 1$, $0 < q \leq 1$, and $0 \leq r \leq 1$, $r \neq 1/2$,  with $q$ and $r$ not both equal to 1. Let $\mu$ be any probability distribution on $\Z$, and let 
$(X_n,n \ge 0)$ be \acm-distributed. Then 
\begin{equation}\label{e.distconv}
\frac{\text{sign}(\cons)}{m+1}\left(\frac{m^m}{|\cons|}\right)^{\frac{1}{m+1}}\cdot\frac{X_n}{n^{1/(m+1)}} \xrightarrow{d} B,
\end{equation}
where $B$ is $\Bet\left(\frac{m+1}{m},1\right)$-distributed. 
\end{thm}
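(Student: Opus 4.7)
The plan is to study the cumulative distribution functions $F_n(x) := \P(X_n \le x)$, derive a temporal recurrence for them, interpret this recurrence as a finite difference scheme for a Hamilton--Jacobi PDE, and identify the distributional limit by solving the PDE explicitly. A direct conditioning on $X_n$ in~\eqref{eq:GHRWRecursionDefn}, using that $D_n \sim \Ber(q)$, gives
\begin{equation*}
F_{n+1}(x) = F_n(x) - q\,(F_n(x) - F_n(x-1))^{m+1},
\end{equation*}
because $X_{n+1}$ moves from $\{\le x\}$ into $\{>x\}$ precisely when $X_n = x$, $D_n = 1$, and all $m$ independent copies of $X_n$ also sit at $x$ (probability $(F_n(x) - F_n(x-1))^m$). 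Writing $F_{n+1}(x) = G(F_n(x-1), F_n(x))$ with $G(a,b) = b - q(b-a)^{m+1}$ exhibits the evolution as a one-sided finite difference scheme.

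Under the rescaling $t = n\eps$, $y = x\,\eps^{1/(m+1)}$, a formal Taylor expansion gives $F_n(x) - F_n(x-1) \approx \eps^{1/(m+1)} u_y$ and $F_{n+1}(x) - F_n(x) \approx \eps\, u_t$, so the scheme is consistent with the Hamilton--Jacobi equation
\begin{equation*}
u_t + q\,(u_y)^{m+1} = 0, \qquad y \in \R,\ t > 0;
\end{equation*}
the exponent $1/(m+1)$ is chosen precisely so that the two occurrences of $\eps$ balance. Since $\mu$ is supported on $\Z$, for each fixed $y \neq 0$ the rescaled initial CDF $F_0(y\,\eps^{-1/(m+1)})$ converges to the Heaviside $u_0(y) := \1\{y \ge 0\}$. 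I would then adapt the Crandall--Lions convergence theory for monotone finite difference schemes (and its extension by Barles--Souganidis) to conclude that, as $\eps \to 0$, the rescaled CDFs converge at every continuity point of $u(1,\cdot)$ to the unique viscosity solution $u$ of the PDE with initial data $u_0$.

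The limit $u$ can then be computed explicitly. The Legendre transform of $H(p) = qp^{m+1}$, restricted to $p \ge 0$, is $L(z) = m z^{(m+1)/m} / ((m+1)^{(m+1)/m} q^{1/m})$, and one verifies directly that
\begin{equation*}
u(t, y) = \begin{cases} 0 & \text{if } y < 0, \\ \min\{1,\, tL(y/t)\} & \text{if } y \ge 0, \end{cases}
\end{equation*}
is a viscosity solution with the correct initial trace. Setting $t = 1$ and substituting $y = z/c$ with $c := (m+1)^{-1}(m^m/q)^{1/(m+1)}$, one checks $c^{(m+1)/m} = m/((m+1)^{(m+1)/m} q^{1/m})$, so that $u(1, z/c) = \min\{1,\, z^{(m+1)/m}\}$. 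This is precisely the CDF of the $\Bet((m+1)/m, 1)$ distribution, which gives~\eqref{e.distconv}.

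The main obstacle is making the PDE step rigorous. Two features complicate the standard theory: (i)~the limiting initial data is a Heaviside step, outside the scope of the usual convergence theorems that require uniformly continuous initial data; and (ii)~monotonicity of the scheme---positivity of $\partial_b G = 1 - q(m+1)(b-a)^m$---can fail when $F_n$ develops sufficiently large atoms, which is possible for generic $\mu$. I expect to address both by the stochastic monotone couplings flagged in the abstract: sandwich $(X_n)$ between auxiliary cooperative motions whose initial distributions are smoothed, slightly translated versions of $\mu$, chosen so that the scheme remains globally monotone and the initial data is regularized in the limit; establish PDE convergence for the auxiliary processes; and finally remove the smoothing to recover~\eqref{e.distconv}.
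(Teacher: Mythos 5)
Your overall architecture matches the paper's: the same CDF recurrence $F_{n+1}=F_n - q(F_n(\cdot)-F_n(\cdot-1))^{m+1}$, the same Hamilton--Jacobi equation $u_t+q|u_x|^{m+1}=0$ under the $n^{1/(m+1)}$ scaling, the same explicit Hopf--Lax solution identifying the $\Bet\left(\frac{m+1}{m},1\right)$ limit (your computation that $u(1,z/c)=z^{(m+1)/m}$ is correct), and the same plan of sandwiching between processes started from regularized, $n$-dependent Lipschitz initial data using monotone couplings and the Crandall--Lions convergence theorem.

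There is, however, one concrete gap in the last step of your plan. To ``sandwich $(X_n)$ between auxiliary cooperative motions'' you must propagate a stochastic ordering of initial laws forward in time, and this propagation fails precisely when one of the laws being compared has an atom of mass exceeding $p^*=(q(m+1))^{-1/m}$: for nonnegative integer-valued variables the one-step map sends $\P(X_1=0)=f(\P(X_0=0))$ with $f(p)=p-qp^{m+1}$, and $f$ is decreasing on $(p^*,1]$, so stochastic ordering can be reversed after a single step. A general $\mu$ on $\Z$ may have such an atom, so you cannot set up the coupling at time $0$; smoothing the \emph{auxiliary} laws does not help, since it is the unsmoothed process whose increments leave the monotonicity range of $G$. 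The missing ingredients are: (i) the class of laws with all atoms at most $p^*$ is preserved by the dynamics (this follows from the pointwise bound $p^{n+1}_j\le\max(p^n_{j-1},p^n_j)$ on the mass function); and (ii) an oversized atom shrinks by a fixed amount $C(q,m)>0$ at each step, so after a bounded number $N_1(q,m)$ of steps every initial law enters this good class. One then applies your argument to the shifted process $(X_{N_1+n})_{n\ge0}$, and the deterministic time shift disappears in the $n^{1/(m+1)}$ rescaling. With that relaxation lemma supplied, your proof goes through and coincides with the paper's. (A minor point: since the limiting initial datum is a Heaviside step, the relevant uniqueness theory for the limit PDE is the Barron--Jensen lsc framework rather than the standard continuous one, which is why the paper takes $F^n_k=\P(X_n<k)$; but as you ultimately only invoke PDE convergence for Lipschitz data and pass to the limit, this is cosmetic.)
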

Theorem~\ref{thm:main2} generalizes a result from~\cite{Addario-Berry2019}, which proves a limit theorem for the \acm\ process in the case $m=1,r=1$. We focus on the case $1/2 < r \leq 1$, in which case $\cons > 0$; a simple symmetry of the \acm\ processes then yields the result for the range $0 \leq r < 1/2$. (This is explained in more detail in Remark \ref{r.smallr}).

\subsection{Proof technique}

Let $B$ be $\Bet\left(\frac{m+1}{m},1\right)$-distributed. Our approach to establishing \eqref{e.distconv} is to work directly with the cumulative distribution function (CDF) of the rescaled random variable. In particular, we show that as $n\rightarrow \infty$, the CDF of $n^{-1/(m+1)}X_{n}$ converges to the CDF of $(m+1)\left(\frac{\cons}{m^m}\right)^{1/m+1}B$, which is 
\begin{equation}\label{e.cdfbeta}
F(x)=\begin{cases}
0&\text{if $x\leq 0$}, \\
\frac{m}{\cons^{\frac{1}{m}}(m+1)^{\frac{m+1}{m}}}x^{\frac{m+1}{m}}&\text{if $0\leq x\leq (m+1)\left(\frac{\cons}{m^m}\right)^{1/m+1}$,}\\
1&\text{otherwise.}
\end{cases} 
\end{equation}
Our approach is based on the observation that the CDF of $X_{n}$ in fact satisfies a finite-difference equation which approximates a first-order Hamilton-Jacobi equation. Note that if $F^n_k := \P(X_n < k)$, we have
\begin{align}\label{e.fnk1st}
F^{n+1}_k &= \P(X_n <k-1) + \P(X_n = k-1, X_{n+1} \leq k-1) +\P(X_n = k, X_{n+1} = k-1)\\
&= F^n_{k-1} + \P(X_n=k-1) - \P(X_n =k-1, X_{n+1} = k) +\P(X_n = k, X_{n+1} = k-1)\notag\\
&= F^n_{k-1} + (F^n_k - F^n_{k-1}) - \P(X_n=k-1)\P(D_n=1)\prod_{i=1}^m\P(\tilde{X}^i_n = k-1)\notag\\
&+ \P(X_n=k)\P(D_n=-1)\prod_{i=1}^m\P(\tilde{X}^i_n = k)\notag\\
&= F^n_k - rq\left(F^n_k - F^n_{k-1}\right)^{m+1} + (1-r)q\left(F^n_{k+1} - F^n_{k}\right)^{m+1}\, , \notag
\end{align}
and we may rewrite the final identity as 
\begin{align}
F^{n+1}_k - F^n_k &= - rq(F^n_k - F^n_{k-1})^{m+1} + (1-r)q(F^n_{k+1}-F^n_k)^{m+1} \notag\\
&= -rq|F^n_k - F^n_{k-1}|^{m+1} + (1-r)q|F^n_{k+1}-F^n_k|^{m+1}\, .
\label{eq:CDFRecursion}
\end{align}

The introduction of $|\cdot|$ in \eqref{eq:CDFRecursion} 
is allowed since
$F^n_k \geq F^n_{k-1}$ for all $k$. We write the recursion in the form of \eqref{eq:CDFRecursion} because this makes \eqref{eq:CDFRecursion}  a discrete analogue (or finite difference scheme) of the first-order partial differential equation (PDE)
\begin{equation}\label{e.genhjint}
u_{t}+\cons|u_{x}|^{m+1}=0\quad\text{in $\RR\times (0, \infty)$}.
\end{equation}
In a nutshell, our approach to proving Theorem~\ref{thm:main2} is to exploit this connection, showing that solutions of \eqref{eq:CDFRecursion} closely approximate solutions of \eqref{e.genhjint} after an appropriate rescaling, when $n$ is large. The remainder of the introduction is principally dedicated to elaborating on the details of this approach and the challenges to carrying it out.

Equation \eqref{e.genhjint} is an example of a nonlinear Hamilton-Jacobi equation of the form 
\begin{equation*}
u_{t}+H(u_{x})=0\quad\text{in $\RR\times (0, \infty)$},
\end{equation*}
with the Hamiltonian $H: \RR\rightarrow \RR$ defined by $H(p)=\cons|p|^{m+1}$. For general initial data, \eqref{e.genhjint} fails to have classical, smooth solutions for all time. The theory of viscosity solutions introduced by Crandall and Lions \cite{users,CL}, which are continuous but need not be differentiable, provides a notion of weak solution for such equations. We will hereafter refer to Crandall-Lions viscosity solutions simply as continuous viscosity solutions. We provide an overview of relevant properties of viscosity solutions for Hamilton-Jacobi equations in  Appendix~\ref{app}. 

While continuous viscosity solutions are perhaps the most well-studied notion of weak solution for PDEs such as \eqref{e.genhjint}, our goal is to find a function $u(x,t)$ solving \eqref{e.genhjint}, which is meant to be an $n \to \infty$ analogue of the distribution function
\[
\P\left(\frac{X_{\lfloor tn\rfloor}}{n^{1/(m+1)}}<x\right). 
\]
We note that for any initial distribution $\mu$ with $\mu(\Z)=1$, we have 
\begin{equation}\label{e.heavy}
\P\left(\frac{X_{0}}{n^{1/(m+1)}}<x\right)
\to
\begin{cases}
1 & x > 0,\\
0 & x < 0, \, 
\end{cases}
\end{equation}
as $n\to \infty$, with the behaviour at $x=0$ depending on the distribution $\mu$. This implies that the continuous analogue $u(x,0)$ we seek will necessarily have a discontinuity at $x=0$. Such a discontinuity in the initial condition puts us outside of the framework of continuous viscosity solutions. 

There have been several attempts to define an appropriate notion of discontinuous viscosity solutions (see \cite{chensu} for some references). One notion, introduced by Barron and Jensen \cite{BJ}, is defined for \emph{convex} Hamilton-Jacobi equations. This is our situation; the Hamiltonian $H(p)=\cons|p|^{m+1}$ in \eqref{e.genhjint} is a convex function. (It is for this reason that we introduced absolute values in \eqref{eq:CDFRecursion}.) The Barron-Jensen theory applies exclusively to lower semicontinuous functions, which is why we choose to define $F^n_k=\P(X_n < k)$, instead of the more traditional definition of a CDF given by $\P(X_n \leq k)$. Of course, this makes practically no difference to the probabilistic analysis. 
We will refer to Barron-Jensen viscosity solutions as lsc (lower semicontinuous) viscosity solutions (see the Appendix for more details about the properties of these solutions which we make use of).
Throughout this paper, every continuous (resp. lsc) viscosity solution we consider is in fact the unique continuous (resp. lsc) solution satisfying the PDE in question (see Theorem \ref{t.cvisc} and Theorem \ref{t.bjhl}). Moreover, the two notions coincide for continuous functions. In particular, any lsc viscosity solution which is a continuous function is also a continuous viscosity solution (see Theorem \ref{t.equiv}).

It turns out that the function $F$ introduced in \eqref{e.cdfbeta} is nothing more than $F(x)=u(x,1)$, where $u(x,t)$ is the lsc viscosity solution of the initial value problem 
\begin{equation}\label{e.hjind}
\begin{cases}
u_{t}+\cons|u_{x}|^{m+1}=0&\text{in $\RR\times (0, \infty)$,}\\
u(x,0)=\indc_{\left\{x>0\right\}}&\text{in $\RR$.}
\end{cases}
\end{equation} 

The lsc viscosity solution of \eqref{e.hjind} can be explicitly computed. 
In fact, for future use, we will compute the lsc viscosity solution of the more general PDE 
\begin{equation}\label{e.hjgenind}
\begin{cases}
u^{a,b}_{t}+\cons|u^{a,b}_{x}|^{m+1}=0&\text{in $\RR\times (0, \infty)$,}\\
u^{a,b}(x,0)=a\indc_{\left\{x\leq 0\right\}}+b\indc_{\left\{x>0\right\}}&\text{in $\RR$,}
\end{cases}
\end{equation}
for $0\leq a<b\leq 1$. Since \eqref{e.hjgenind} is a convex Hamilton-Jacobi equation, Theorem \ref{t.bjhl} guarantees that the corresponding lsc viscosity solution is given by the Hopf-Lax formula from control theory, 
\begin{equation}\label{e.hl}
u^{a,b}(x,t)=\inf_{y\in \R} \left\{u^{a,b}(y,0)+tH^{*}\left(\frac{x-y}{t}\right)\right\},
\end{equation}
where $H^{*}$ is the Legendre transform of $H$, defined by 
\begin{equation*}
H^{*}(p)=\sup_{\al\in \RR}\left(\alpha p-H(\al)\right).
\end{equation*}
For the Hamiltonian $H(p)=\cons|p|^{m+1}$, as $H$ is superlinear ($\lim_{p\rightarrow \infty} \frac{H(p)}{|p|}=+\infty$) and $u^{a,b}(x,0)$ is lower semicontinuous, the infimum in \eqref{e.hl} is achieved.  We may thus compute explicitly that for this Hamiltonian, 
\begin{align}
H^{*}(p)&=\sup_{\al\in \R}\left(\alpha p-\cons|\alpha|^{m+1}\right)\notag\\
&=\frac{|p|^{\frac{m+1}{m}}}{(\cons(m+1))^{\frac{1}{m}}}-\cons^{-\frac{1}{m}}\left|\frac{p}{m+1}\right|^{\frac{m+1}{m}}\notag\\
&=\cons^{-\frac{1}{m}}|p|^{\frac{m+1}{m}}\frac{m}{(m+1)^{\frac{m+1}{m}}}.\label{e.hstar}
\end{align}

It follows that the lsc viscosity solution $u^{a,b}$ of \eqref{e.hjgenind} is given by 
\begin{align*}
u^{a,b}(x,t)&=\inf_{y\in \R} \left\{a\indc_{\left\{y\leq 0\right\}}+b\indc_{\left\{y>0\right\}}+tH^{*}\left(\frac{x-y}{t}\right)\right\}\\
&=\inf_{y\in \R} \left\{a\indc_{\left\{y\leq 0\right\}}+b\indc_{\left\{y>0\right\}}+t\cons^{-\frac{1}{m}}\left|\frac{x-y}{t}\right|^{\frac{m+1}{m}}\frac{m}{(m+1)^{\frac{m+1}{m}}}\right\}\\
&=\inf_{y\in \R} \left\{a\indc_{\left\{y\leq 0\right\}}+b\indc_{\left\{y>0\right\}}+\frac{1}{t^{\frac{1}{m}}}\cons^{-\frac{1}{m}}\frac{m}{(m+1)^{\frac{m+1}{m}}}|x-y|^{\frac{m+1}{m}}\right\}.
\end{align*}
A straightforward analysis yields that the preceding infimum is achieved at
\begin{equation*}
y=\begin{cases}
0&\text{if $0\leq \left(\frac{x^{m+1}}{t}\right)^{\frac{1}{m}}\leq (b-a)\cons^{\frac{1}{m}}\frac{(m+1)^{\frac{m+1}{m}}}{m}$,}\\
x&\text{otherwise.}
\end{cases}
\end{equation*}
This implies that 
\begin{equation}\label{eq:scaledICsoln}
u^{a,b}(x,t)=\begin{cases}
a&\text{if $x\leq 0$}, \\
a+\frac{m}{\cons^{\frac{1}{m}}(m+1)^{\frac{m+1}{m}}}\left(\frac{x^{m+1}}{t}\right)^{\frac{1}{m}}&\text{if $0\leq \left(\frac{x^{m+1}}{t}\right)^{\frac{1}{m}}\leq (b-a)\cons^{\frac{1}{m}}\frac{(m+1)^{\frac{m+1}{m}}}{m}$,}\\
b&\text{otherwise.}
\end{cases}
\end{equation}
In the case when $a=0$, and $b=1$ (so for $u$ solving \eqref{e.hjind}), we may rewrite this as 

\begin{equation}
\label{eq:explicitsolution}
u(x,t)=\begin{cases}
0&\text{if $x\leq 0$}, \\
\frac{m}{\cons^{\frac{1}{m}}(m+1)^{\frac{m+1}{m}}}\left(\frac{x^{m+1}}{t}\right)^{\frac{1}{m}}&\text{if $0\leq x\leq (m+1)\left(\frac{\cons t}{m^m}\right)^{1/m+1}$,}\\
1&\text{otherwise,}
\end{cases}
\end{equation}
which agrees with the rescaled Beta CDF given in \eqref{e.cdfbeta} when $t = 1$.
With regards to demonstrating the convergence of the finite difference scheme \eqref{eq:CDFRecursion} to solutions of \eqref{e.genhjint}, we begin by recalling a robust result of Crandall and Lions \cite{Crandall1984}. In \cite{Crandall1984}, the authors identify sufficient conditions for functions defined by finite difference schemes on a space time mesh $\xmesh \mathbb{Z}\times \tmesh \mathbb{N}$ to converge to the continuous viscosity solution of the corresponding Hamilon-Jacobi equation (such as \eqref{e.genhjint}). Their general result is stated as Theorem \ref{t.cl}, below. Upon an appropriate scaling, we may convert \eqref{eq:CDFRecursion} to a finite difference relation on $\xmesh \mathbb{Z}\times \tmesh \mathbb{N}$. Theorem \ref{t.cl} implies that, if \eqref{eq:CDFRecursion} satisfies a monotonicity condition (see Definition \ref{defn:monotonicity}) and $F^{0}_{k}:=u_{0}(k\xmesh)$ is the discretization of a Lipschitz continuous function $u_{0}$ on the mesh $\xmesh\mathbb{Z}$, then for all sufficiently small $\xmesh$, the values $F^N_k$ defined by the finite difference scheme are uniformly close to viscosity solutions $u(k\xmesh,N\tmesh)$ of the PDE with $u(x,0)=u_{0}(x)$, for $N\tmesh$ lying in any compact time interval $[0,T]$. 
The Crandall--Lions theory, however, relies upon the initial data being Lipschitz continuous, as well as using the theory of continuous viscosity solutions. Since we aim to show that the CDFs of the rescaled \acm\ random variables $(X_{n}n^{-1/(m+1)}, n\geq 0)$ converge to the lsc viscosity solution of \eqref{e.hjind}, this precludes a direct application of the results of \cite{Crandall1984} to prove Theorem~\ref{thm:main2}. Furthermore, to the best of our knowledge, no numerical approximation results analogous to those of \cite{Crandall1984} have been proved for lsc viscosity solutions.

Probabilistically, the Lipschitz continuity required by the Crandall-Lions theory is also an issue: it means that the CDF of $X_0/n^{1/(m+1)}$ should be a discretization of a Lipschitz function, with Lipschitz constant independent of $n$; but for a fixed initial distribution, this is impossible (recall \eqref{e.heavy}). To obtain such Lipschitz continuity, the Crandall-Lions theory thus requires the initial condition for the discrete process to depend on the mesh size, which probabilistically translates to requiring the initial distribution of the cooperative motion to depend on the target time $n$ at which we wish to observe the process.
 
In order to make use of the results of \cite{Crandall1984} in
our setting, we use further properties of the probabilistic model in order to demonstrate convergence to the lsc viscosity solution (which corresponds to the Beta-distributed limit in Theorem~\ref{thm:main2}).  In particular, we prove a discrete stochastic monotonicity result, Lemma~\ref{lem:StochasticOrdering}, which allows us to couple the process started from different initial distributions. This coupling is surprisingly delicate; it is not the case that the cooperative motion evolution preserves stochastic ordering for arbitrary initial distributions. However, we prove that it preserves stochastic ordering whenever the initial distribution is not too singular (i.e. when all atoms satisfy a quantitative upper bound, depending on $q$ and $m$). Having established this allows us to use the results of Crandall and Lions \cite{Crandall1984} to prove convergence to the lsc viscosity solution. We stochastically sandwich the evolution started from any initial conditions using Lipschitz-continuous ($n$-dependent) initial conditions, up to an error term which can be made arbitrarily small (after rescaling by $n^{1/(m+1)}$). This allows us to demonstrate the convergence in \eqref{e.distconv} for sufficiently non-singular initial distributions; see Proposition \ref{prop:goodSingIC_conv}. We then conclude by showing that any initial distribution ``relaxes'' to a sufficiently non-singular distribution in a bounded number of steps.

We mention that a recurrence similar to \eqref{eq:CDFRecursion} can be written for the probability mass function $p^n_k = \P(X_n = k)$: 
\begin{equation}\label{eq:pmf_formulation}
p^{n+1}_k - p^{n}_k = -rq\left((p^n_k)^{m+1}-(p^n_{k-1})^{m+1}\right) + (1-r)q\left((p^n_{k+1})^{m+1}-(p^n_{k})^{m+1}\right). 
\end{equation}
This recurrence can be interpreted as a discretization of the scalar conservation law, 
\begin{equation}\label{e.genconsv}
v_{t}=-\cons(v^{m+1})_{x}.
\end{equation}

Indeed, this connection was observed in \cite{Addario-Berry2019} in the special case when $m=1, r = 1$, and the proof in \cite{Addario-Berry2019} of the $m=1, r = 1$ case of Theorem~\ref{thm:main2} relied upon similar numerical PDE $L^{1}$ convergence results for finite difference schemes of scalar conservation laws. In particular, rescaled solutions of the recursion above converge in $L^{1}$ to the unique entropy solutions of \eqref{e.genconsv}. From the theory of PDEs, it is well-known that in the one-dimensional setting, entropy solutions of \eqref{e.genconsv} correspond precisely to derivatives of viscosity solutions of \eqref{e.genhjint}. This motivated our approach of working directly with the CDFs in this paper, and using viscosity solutions methods in this setting. The advantages of working with viscosity solutions include (a) the fact that the solution theory, at least as it relates to such probabilistic models, is better developed for viscosity solutions than for the corresponding entropy solutions, and (b) the fact that working in the ``integrated'' setting gives the solutions greater regularity, which makes the resulting proofs more direct. 

The rest of the paper proceeds as follows. In Section \ref{sect:LipIC}, we review the results of Crandall and Lions \cite{Crandall1984} and use them to demonstrate convergence of CDFs of a rescaled $\acm$ process with a ``diffuse'' initial condition, which approximates a Lipschitz continuous function. In Section \ref{s.goodic}, we show convergence of CDFs of a rescaled $\acm$ process with initial distribution $\mu$ which has no overly large atoms in its support. In Section \ref{sect:GenIC}, we remove this hypothesis on the size of the atoms of $\mu$, and complete the proof of Theorem \ref{thm:main2}. Section \ref{s.generalization} concerns the limitations of the approach taken in this paper, and includes Theorem \ref{thm:no_stoch_mono}, which presents a provable obstacle to applying our methodology to 
establish convergence of cooperative motion-type processes with $|D_n| > 1$. This section also presents Theorem~\ref{thm:main_lattice}, which shows that when the step size is an integer multiple of a Bernoulli random variable, the resulting lattice effects lead to limits which are mixtures of Beta distributions.  
Finally, Appendix \ref{app} provides an overview of continuous and lsc viscosity solutions, and describes several important properties of such solutions that we use throughout the paper. 

\subsection{Notation}

Before proceeding, we introduce some terminological conventions. 
Given a random variable $X$, we define the CDF $F_X:\R \to [0,1]$ of $X$ by $F_X(x) = \P(X < x)$; as mentioned in the introduction, we use this definition rather than the standard $F_X(x)=\P(X \le x)$ to make it easier to appeal to the relevant PDE theory, which has been developed for lower semicontinuous functions. 

We say a function $F:\R\to [0,1]$  is a CDF if it is the CDF of an $\R$-valued random variable, and that $F$ is an extended CDF if it is the CDF of an extended random variable (i.e.\ a random variable taking values in $\R\cup \{\pm \infty\}$). 

For random variables $X,Y$ taking values in $\R\cup \{\pm \infty\}$, we write $X \preceq Y$ and say that $Y$ stochastically dominates $X$ if $\P(X < x) \ge \P(Y < x)$ for all $x \in \R$. 

Let $(X_n,n \ge 0)$ and $(\tilde{X}_n,n \ge 0)$ be ACM$(m,q,r,\mu)$ and ACM$(m,q,r,\tilde{\mu})$-distributed, respectively. 
Suppose that $\tilde{X}_0 \preceq X_0$. 
Then we say that the ACM evolution is {\em stochastically monotone} for $\mu$ and $\tilde{\mu}$ if $\tilde{X}_n \preceq X_n$ for all $n \ge 0$. In other words, the ACM evolution is stochastically monotone for $\mu$ and $\tilde{\mu}$ if it preserves their stochastic ordering in time.

\section{Finite Difference Schemes for Diffuse Initial Conditions}\label{sect:LipIC}

As mentioned in Section \ref{s.intro}, our approach is to interpret CDFs of the discrete random variables $(X_{n}, n\geq 0)$ as solutions of a finite difference scheme. As before, fix $m \in \mathbb{N}$ with $m\geq1$, $q \in (0,1]$, $r \in (1/2,1]$, with $q$ and $r$ not both 1, and a probability distribution $\mu$ supported on $\Z\cup\{-\infty,\infty\}$. Let $(X_n,n \ge 0)$ be $\acm$-distributed, and for $k \in \Z$ write $F^n_k=F^n_k(\mu)=\P(X_n < k)=\mu[-\infty,k)$. (We suppress the dependence on $m,q$ and $r$ as they are fixed throughout, and also suppress the dependence on $\mu$ whenever possible.) Then $(F^n_k)_{k \in \Z, n\in \N}$ is defined by
\begin{equation}\label{eq:Scheme}
\begin{cases}
F^{n+1}_k-F^n_k = -rq\left(F^n_k-F^n_{k-1}\right)^{m+1} + (1-r)q\left(F^n_{k+1}-F^n_{k}\right)^{m+1} & n\ge0,k \in \Z,\\
F^0_k  = \mu[-\infty,k) & k \in \Z. 
\end{cases}
\end{equation}

Since $F^{n}_{k}$ is nondecreasing in $k$ for all $n\in \mathbb{N}$, \eqref{eq:Scheme} can be rewritten as
\begin{equation}\label{e.schemepos}
\begin{cases}
F^{n+1}_k-F^n_k = -rq\left|F^n_k-F^n_{k-1}\right|^{m+1} + (1-r)q\left|F^n_{k+1}-F^n_{k}\right|^{m+1} & n\ge0,k \in \Z, \\
F^0_k  = \mu[-\infty,k) & k \in \Z\, ,
\end{cases}
\end{equation}
and the function defined by \eqref{e.schemepos} is identical to the function defined by \eqref{eq:Scheme}. We will use \eqref{eq:Scheme} and \eqref{e.schemepos} interchangeably, and will also use the fact that $F^{n}_{k}$ is nondecreasing in $k$, for all $n\in \mathbb{N}$, frequently in what follows. 

The main result of this section is the following proposition, which states that solutions of the recurrence relation from \eqref{e.schemepos}, with nondecreasing, Lipschitz initial data converge to solutions of the appropriate Hamilton-Jacobi equation. 
\begin{prop}\label{p.intlip}
Let $u_0$ be a Lipschitz-continuous extended CDF with Lipschitz constant $K$. Fix $N \in \N$ and define a probability distribution $\mu_N$ on $\Z \cup \{-\infty,\infty\}$ by  
\[
\mu_N[-\infty,k):=u_0(kN^{-1/(m+1)}) 
\]
for $k \in \Z$. Let $(X_n,n \ge 0)$ be ACM$(m, q, r, \mu_{N})$-distributed, and let $F^n_k = F^n_k(\mu_N) = \P(X_n < k)$. Finally, fix $T>0$. Then there exist $N_0=N_0(m,q,K)$ and $c=c(K,m, \cons, T)$ such that if $N \ge N_0$, 
\begin{equation}\label{e.invp}
\sup_{0 \le t \le T} 
\sup_{k \in \Z} 
\left|
F^{\lfloor Nt\rfloor}_k - u\Big(\frac{k}{N^{1/(m+1)}},t\Big)
\right| 
\le \frac{c}{N^{1/2}}\, ,
\end{equation}
where $u$ is the continuous viscosity solution of 
\begin{equation}\label{e.genhj}
\begin{cases}
u_{t}+\cons|u_{x}|^{m+1}=0&\text{in $\RR\times (0, \infty)$,}\\
u(x,0)=u_{0}(x)&\text{in $\RR$.}
\end{cases}
\end{equation}
It follows that $u$ is an extended CDF and that 
\begin{equation}\label{e.invp1}
\sup_{x \in \R} 
\left|
\P\Big(\frac{X_N}{N^{1/(m+1)}}< x \Big)-u(x,1)
\right| \le 
\frac{c}{N^{1/2}}\, .
\end{equation}
\end{prop}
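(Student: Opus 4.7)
My plan is to cast the recursion \eqref{e.schemepos} as a finite-difference scheme on an appropriate space-time mesh and invoke Theorem \ref{t.cl} of Crandall and Lions. Set $\Delta_x = N^{-1/(m+1)}$ and $\Delta_t = N^{-1}$, so that $\Delta_t = (\Delta_x)^{m+1}$, which is the correct parabolic-type scaling for the Hamiltonian $H(p) = q|p|^{m+1}$. In these variables, \eqref{e.schemepos} becomes
\[
\frac{F^{n+1}_k - F^n_k}{\Delta_t} + q\left|\frac{F^n_k - F^n_{k-1}}{\Delta_x}\right|^{m+1} = 0,
\]
an upwind-type discretization of \eqref{e.genhj}. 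The initial values $F^0_k = u_0(k\Delta_x)$ form a $K\Delta_x$-Lipschitz discretization of $u_0$ on the mesh.

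The main analytic task is to verify the monotonicity hypothesis of Theorem \ref{t.cl}. Write the scheme as $F^{n+1}_k = G(F^n_k, F^n_{k-1})$ with $G(a,b) = a - q(a-b)^{m+1}$, which is well defined since $F^n_k \ge F^n_{k-1}$. Then $\partial_b G = q(m+1)(a-b)^m \ge 0$ unconditionally, while $\partial_a G = 1 - q(m+1)(a-b)^m \ge 0$ requires $(a-b)^m \le (q(m+1))^{-1}$. By the Lipschitz hypothesis, $F^0_k - F^0_{k-1} \le K\Delta_x$, so this constraint holds initially provided $N \ge N_0(K,q,m)$ is large enough that $(K\Delta_x)^m \le (q(m+1))^{-1}$. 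To propagate the discrete Lipschitz bound to every $n\ge 0$, I use the standard monotone-scheme contraction principle: the scheme commutes with spatial shifts and preserves constants, so monotonicity yields
\[
\max_k\, |F^{n+1}_{k+1} - F^{n+1}_k| \;\le\; \max_k\, |F^n_{k+1} - F^n_k|
\]
for every $n$, and hence the bound $F^n_k - F^n_{k-1} \le K\Delta_x$ is preserved, keeping the scheme monotone for all times of interest.

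With monotonicity and a uniform-in-$n$ Lipschitz estimate in hand, Theorem \ref{t.cl} applies and delivers the $L^\infty$ error bound \eqref{e.invp}, with the constant $c$ depending on $K$, $m$, $q$ and $T$. The statement \eqref{e.invp1} is then simply the $t = 1$ specialization of \eqref{e.invp}. The fact that $u(\cdot,1)$ is an extended CDF follows from the Hopf--Lax formula \eqref{e.hl}: because $u_0$ is a nondecreasing $[0,1]$-valued function, the formula guarantees that $u(\cdot, t)$ inherits both monotonicity in $x$ and the range $[0,1]$.

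The principal obstacle I anticipate is the circular interplay between monotonicity and Lipschitz control. Monotonicity of the scheme is needed to preserve the Lipschitz bound on the discrete gradient, but the Lipschitz bound is itself what ensures monotonicity; resolving this requires choosing $N_0$ so that the Lipschitz bound sits strictly below the monotonicity threshold at the initial time, after which one runs a clean induction. Beyond this, the argument is essentially a direct translation of the Crandall--Lions machinery into the probabilistic setting, matching their rescaled mesh conditions to the scaling exponent $1/(m+1)$ dictated by the cooperative-motion process.
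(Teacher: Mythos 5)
Your proposal is correct and follows essentially the same route as the paper's proof: the same mesh choice $\Delta_x = N^{-1/(m+1)}$, $\Delta_t = N^{-1}$, the same differentiation of $G$ to verify monotonicity for discrete slopes up to (roughly) $K$, which forces $N \ge N_0(K,q,m)$, the same shift-invariance argument to propagate the discrete Lipschitz bound in $n$, and the same appeal to Theorem \ref{t.cl}. The only step you elide is passing from the mesh times $n\Delta_t$, at which Theorem \ref{t.cl} gives the estimate, to arbitrary $t \in [0,T]$ as required in \eqref{e.invp}; the paper handles this with the time-Lipschitz bound of Proposition \ref{p.regularity}, which costs only an extra $O(1/N)$ and does not affect correctness.
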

In order to prove this proposition, we require the framework of monotone finite different schemes for Hamilton-Jacobi equations. We next introduce this framework, and relate it to the evolution of the CDF of cooperative motion. 

We may imagine numerically approximating the solution of a Hamilton-Jacobi equation of the form
\begin{equation}\label{e.hj_template}
\begin{cases}
u_{t}+H(u_{x})=0&\text{in $\RR\times (0, \infty)$,}\\
u(x,0)=u_{0}(x)&\text{in $\RR$}
\end{cases}
\end{equation}
as follows. Fix temporal and spatial mesh sizes ($\tmesh$ and $\xmesh$, respectively). 
Set $U^0_k = u_0(k\xmesh)$ for $k \in \Z$, and 
for $n \ge 0$ define $U^{n+1}_k$ by  
\begin{equation}\label{e.genmon}
U^{n+1}_{k}=G(U^n_{k+1},U^{n}_{k}, U^{n}_{k-1}):=U^{n}_{k}-\tmesh \hfunc\left(\frac{U^{n}_{k+1}-U^{n}_{k}}{\xmesh},\frac{U^{n}_{k}-U^{n}_{k-1}}{\xmesh}\right).
\end{equation}
Here $\hfunc:\R\times\R \to \R$ is a function to be chosen, which is meant to act as an approximation of $H$. We may write the function $G$ in \eqref{e.genmon} as 
$G(x,y,z) = G^{\Delta}(x,y,z)= y - \tmesh \hfunc(\tfrac{x-y}{\xmesh},\tfrac{y-z}{\xmesh})$, 
where $\Delta=(\xmesh,\tmesh)$.

We now use \eqref{e.genmon} to define a rescaled field of values 
\[
u^\Delta:  \xmesh\Z \times\tmesh\N\to \R
\] 
by setting $u^\Delta(k\xmesh, n\tmesh):= U^n_k$.With this definition, \eqref{e.genmon} is equivalent to the statement that 
\begin{multline*}
\frac{u^{\Delta}(k\xmesh, n\tmesh+\tmesh) - u^{\Delta}(k\xmesh,n\tmesh)}{\tmesh}
\\+
\hfunc\Big(\frac{u^{\Delta}(k\xmesh+\xmesh, n\tmesh)-u^{\Delta}(k\xmesh, n\tmesh)}{\xmesh},\frac{u^{\Delta}(k\xmesh, n\tmesh)-u^{\Delta}(k\xmesh-\xmesh, n\tmesh)}{\xmesh}\Big)=0.
\end{multline*}
Each of the arguments of $g$ looks like an approximation of $u^{\Delta}_x$, so this in some sense looks formally like a discretization of  \eqref{e.hj_template} on the space-time mesh $\xmesh \Z\times \tmesh \N$. 
We refer to \eqref{e.genmon} as a {\em finite difference scheme} for the initial value problem \eqref{e.hj_template}. 

It turns out that, under suitable regularity assumptions on the initial condition $u_0$ and the Hamiltonian $H$, the sufficient conditions on \eqref{e.genmon} for 
$u^\Delta$, or equivalently 
$(U^n_k)_{k \in \Z,n \in \N}$, 
to well-approximate $u$ as $\tmesh$ and $\xmesh \to 0$ are \emph{consistency} and {\em monotonicity}. The consistency condition is simply that $\hfunc(p,p)=H(p)$ for all $p\in\mathbb{R}$. 
The monotonicity condition is as follows. 

\begin{define}\label{defn:monotonicity}
We say recurrence of the form \eqref{e.genmon} is monotone on an interval $[\lambda,\Lambda]\subseteq \mathbb{R}$ if $G(U^n_{k+1},U^{n}_{k}, U^{n}_{k-1})$ is a nondecreasing function of each argument so long as for all $k$, 
\begin{equation}\label{e.disclip}
\lambda\leq (\xmesh)^{-1}\left(U^{n}_{k+1}-U^{n}_{k}\right)\leq \Lambda.
\end{equation} 
\end{define}
We now state the main result from \cite{Crandall1984}, specialized to the one-dimensional setting of the current paper, on the quality of approximation provided by monotone finite difference schemes for Hamilton-Jacobi equations. 
\begin{thm}\label{t.cl}[Theorem 1, \cite{Crandall1984}]
Let $u:\RR\times (0, \infty)\rightarrow \RR$ be the continuous viscosity solution of 
\begin{equation}\label{e.ivp}
\begin{cases}
u_{t}+H(u_{x})=0&\text{in $\RR\times (0, \infty)$,}\\
u(x,0)=u_{0}(x)&\text{in $\RR$,}
\end{cases}
\end{equation}
where $H:\RR\rightarrow \RR$ is continuous and $u_{0}$ is bounded and Lipschitz continuous with Lipschitz constant $K$.  Fix $\xmesh > 0$ and $\tmesh > 0$, let $U^{0}_{k}:=u_{0}(k\xmesh)$, and define $U^{n}_{k}$ by a general scheme of the form \eqref{e.genmon}. 

If \eqref{e.genmon} is consistent and monotone on $[-(K+1),K+1]$, then for any $T >0,$ there exists $c$, depending on $\sup |u_{0}|, K, H$, and $T$ so that
\begin{equation}
\sup_{n \in \N,n\tmesh\in[0,T]} \sup_{k \in \Z}|U^{n}_{k}-u(k\xmesh, n\tmesh)|\leq c\sqrt{\tmesh}. 
\end{equation}
\end{thm}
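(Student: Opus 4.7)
The plan is a Kruzhkov-style doubling of variables argument comparing the mesh function $U^n_k$ directly with the continuous viscosity solution $u$ of \eqref{e.ivp}. Before running the doubling, one needs a priori Lipschitz regularity of the scheme in both $x$ and $t$, which is a consequence of the monotonicity hypothesis. Specifically, setting $V^n_k := U^n_{k+1}$ gives a second solution of the scheme with initial data $V^0_k = U^0_{k+1}$, so the Lipschitz bound on $u_0$ yields $|V^0_k - U^0_k| \le K\xmesh$. Since the initial slopes of $U^0$ lie in $[-K, K] \subset [-(K+1), K+1]$, componentwise monotonicity of $G$ propagates this comparison to every $n$, giving $|U^n_{k+1} - U^n_k| \le K\xmesh$ for all $k, n$. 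Temporal Lipschitz $|U^{n+1}_k - U^n_k| \le \tmesh \sup_{|p| \le K} |H(p)|$ then follows directly from \eqref{e.genmon}, and a parallel induction applied to ordered pairs of schemes yields a discrete $L^\infty$ comparison principle.

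Next, extend $U^n_k$ to $\bar U : \RR \times [0, T] \to \RR$ via piecewise-constant interpolation, and for positive parameters $\eps, \sigma$ introduce
\begin{equation*}
\Phi(x, t, y, s) = u(x, t) - \bar U(y, s) - \frac{(x - y)^2 + (t - s)^2}{2\eps} - \sigma t,
\end{equation*}
penalized slightly by $-\gamma(x^2 + y^2)$ (sent to zero at the end) to ensure that a maximizer exists on $\RR \times [0, T] \times \RR \times [0, T]$. If the max point $(x^*, t^*, y^*, s^*)$ satisfies $t^* = 0$, one controls $\sup \Phi$ via the initial data together with the Lipschitz regularity already established. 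Otherwise the viscosity subsolution property of $u$ at $(x^*, t^*)$ yields
\begin{equation*}
\sigma + \frac{t^* - s^*}{\eps} + H\!\left(\frac{x^* - y^*}{\eps}\right) \le 0,
\end{equation*}
while evaluating the scheme at a mesh point closest to $(y^*, s^*)$ and applying the monotonicity and consistency of $G$ produces the matching discrete inequality with consistency error of order $\xmesh + \tmesh$.

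Subtracting these two inequalities, and using the Lipschitz bounds from the first step to control the ratios $|x^* - y^*|/\eps$ and $|t^* - s^*|/\eps$, yields a bound of the form $\sup(u - \bar U) \le c \bigl( \sigma + (\xmesh + \tmesh)/\eps + \eps \bigr)$. Sending $\sigma \to 0$ and choosing $\eps$ to balance the remaining terms (using the CFL-type relation between $\xmesh$ and $\tmesh$ that is forced by the monotonicity assumption) produces the claimed $\sqrt{\tmesh}$ rate on $\sup(u - \bar U)$. A symmetric doubling with the roles of $u$ and $\bar U$ interchanged furnishes the matching lower bound, and the pointwise estimate of the theorem follows by restricting to mesh points $(k\xmesh, n\tmesh)$.

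The main obstacle is the second step, namely extracting the discrete analogue of the viscosity inequality at $(y^*, s^*)$. Unlike $u$, the mesh function $\bar U$ satisfies no pointwise PDE, so the inequality it must obey when $\bar U + \text{(smooth test)}$ is locally minimized has to be derived from the scheme itself by iterating the componentwise monotonicity of $G$ to convert pointwise ordering of perturbed mesh functions into scheme-level inequalities. It is precisely the monotonicity hypothesis on $[-(K+1), K+1]$ that makes this machinery go through, and carefully tracking its induced consistency errors is what governs the final convergence rate.
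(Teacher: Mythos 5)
The paper does not actually prove Theorem~\ref{t.cl}: it is quoted (in one-dimensional form) from Crandall--Lions \cite{Crandall1984}, so there is no internal proof to compare against. Your architecture --- discrete space/time Lipschitz bounds extracted from monotonicity by the shift-and-compare trick, then a doubling-of-variables comparison between $u$ and a piecewise-constant extension of the mesh function, with a quadratic penalization at scale $\eps$ --- is precisely the strategy of the cited source, and your first step is essentially reproduced in the paper's own discussion following the theorem (attributed there to the proof of Proposition~3.1 of \cite{Crandall1984}).

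There is, however, a genuine gap in the final balancing. The spatial difference quotient approximates the derivative of the quadratic test function with error of order $\xmesh/\eps$ (its second derivative is of order $1/\eps$), and the temporal quotient carries an error $\tmesh/\eps$; accumulated over $O(T/\tmesh)$ steps the discrete supersolution inequality carries a total error $O((\xmesh+\tmesh)/\eps)$, exactly as you write. Optimizing $\eps+(\xmesh+\tmesh)/\eps$ yields $\sqrt{\xmesh+\tmesh}$, which is $O(\sqrt{\tmesh})$ only when $\xmesh=O(\tmesh)$. You invoke ``the CFL-type relation forced by monotonicity'' to close this, but monotonicity of $G(y,z)=y-\tmesh H(\tfrac{y-z}{\xmesh})$ in its first argument forces $\tmesh\sup|H'|\le \xmesh$, i.e.\ an inequality in the \emph{opposite} direction; it is perfectly consistent with $\xmesh\gg\tmesh$ (indeed that is the regime of this paper, where $\xmesh=\tmesh^{1/(m+1)}$). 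What rescues the $\sqrt{\tmesh}$ rate in \cite{Crandall1984} is the standing hypothesis there that the ratio $\tmesh/\xmesh$ is held fixed as the mesh is refined, with the constant depending on that ratio --- a hypothesis suppressed in the statement as reproduced above. As written, your argument proves $c\sqrt{\xmesh+\tmesh}$; you must either add the fixed-ratio hypothesis or state the conclusion in that form. A smaller point: the extraction of the discrete supersolution inequality at $(y^*,s^*)$, which you rightly identify as the crux, is described but not carried out; in \cite{Crandall1984} it is obtained by comparing the mesh function with the scheme evolution of the (shifted) test function over all time steps up to $s^*$ rather than by a pointwise touching argument, and writing this out is where most of the remaining work lies.
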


Before connecting Theorem~\ref{t.cl} to cooperative motion, it is instructive to further discuss the meaning and value of monotonicity in this setting. (The following discussion is inspired by the proof of \cite[Proposition~3.1]{Crandall1984}.)
Fix $K > 0$ and two sets of initial conditions $(U^0_k)_{k \in \Z}$ and $(\tilde{U}^0_k)_{k \in \Z}$ with $U^0_k \le \tilde{U}^0_k$, then set $U^{n+1}_k = G(U^n_{k+1},U^n_k,U^n_{k-1})$ and $\tilde{U}^{n+1}_k = G(\tilde{U}^n_{k+1},\tilde{U}^n_k,\tilde{U}^n_{k-1})$ for $n \ge 0$ as in \eqref{e.genmon}.

Suppose that $G$ is monotone on $[-K,K]$, and that 
\begin{equation}\label{e.locmon}
\frac{|U^0_k-U^0_{k-1}|}{\xmesh}\le K
\quad \mbox{and} \quad
\frac{|\tilde{U}^0_k-\tilde{U}^0_{k-1}|}{\xmesh}\le K
\end{equation}
for all $k$. Then monotonicity implies that 
\begin{equation}\label{e.monoprop}
U^1_k =
G(U^0_{k+1},U^0_k,U^0_{k-1})\le 
G(\tilde{U}^0_{k+1},\tilde{U}^0_k,\tilde{U}^0_{k-1}) =\tilde{U}^1_k \, .
\end{equation}
Next, let $(W^{0}_{k})_{k\in \mathbb{Z}}$ be any initial condition with $\sup_k \xmesh^{-1} |W^0_k-W^0_{k-1}| \le K$, and set $W^{n+1}_{k}=G(W^n_{k+1},W^{n}_{k}, W^{n}_{k-1})$ for $n \ge 0$ and $k \in \Z$. Write $\lambda=\sup_k |W^0_k-U^0_k|$. Let $V^0_k=U^0_k+\lambda$, and set $V^1_k = G(V^0_{k+1},V^0_k,V^0_{k-1})=U^1_k+\lambda$. By the choice of $\lambda$, we have $W^{0}_{k}\leq V^{0}_{k}$. Then monotonicity gives that 
\[
W^1_k \le V^1_k=U^1_k+\lambda\, ,
\]
and a symmetric argument gives that $W^1_k \geq U^1_k - \lambda$, so 
\[
\sup_k |W^1_k - U^1_k|\le \lambda. 
\]
We apply this with the specific choice of initial condition $W^0_k = U^0_{k-1}$. Since $W^{1}_{k}=U^{1}_{k-1}$, the preceding bound gives
\[
\sup_k|U^1_{k}-U^1_{k-1}|=\sup_k|U^1_k-W^1_{k}|
 \le \lambda 
= \sup_k |U^0_k-W^0_k|
=\sup_k|U^0_{k} - U^0_{k-1}| \le K \xmesh.  
\]
A similar analysis allows us to conclude that 
\begin{equation*}
\sup_k|\tilde{U}^1_{k}-\tilde{U}^1_{k-1}|\leq  K \xmesh. 
\end{equation*}
By the two preceding bounds and \eqref{e.monoprop}, it follows by induction that $U^n_k \le \tilde{U}^n_k$ for all $n \in \N$ and $k \in \Z$ and that  $\sup_k|U^n_k-U^n_{k-1}| \le K\xmesh$ for all $n$. In short, equation \eqref{e.disclip}, which can be viewed as a type of discrete Lipschitz bound on $U^{n}_{k}$, allows one to show that an order relation between two initial conditions persists for all positive times. 
\begin{remark}\label{r.mon_new}
Whenever the initial condition $(U^0_k)_{k \in \Z}$ is nondecreasing in $k$, the above argument shows that if $G$ is monotone on $[0,K]$ and $\sup_k(U^0_k-U^0_{k-1}) \le K\xmesh$, then $(U^n_k)_{k \in \Z}$ is nondecreasing in $k$ and $\sup_k(U^n_k-U^n_{k-1}) \le K\xmesh$ for all $n \in \N$. 
It follows from this that if $u_0$ is nondecreasing, then in order to verify the condition of Theorem~\ref{t.cl} one need only check that \eqref{e.genmon} is monotone on $[0,K+1]$. 
\end{remark}

We now specialize the above discussion to the specific setting of our paper, so again let $F^n_k=\P(X_n<k)$ where $(X_n, {n \ge 0})$ is $\acm$-distributed. Given spatial and temporal mesh sizes ($\xmesh$ and $\tmesh$, respectively), we may use the field of values $(F^n_k)_{k \in \Z,n \in \mathbb{N}}$ to define a rescaled field $f=f_{\tmesh,\xmesh}:  \xmesh\mathbb{Z}\times \tmesh\mathbb{N} \to \R$ by setting $f(k\xmesh, n\tmesh):= F^n_k$.

In order to identify an appropriate scaling relationship between $\xmesh$ and $\tmesh$, we seek a continuous space-time scaling which preserves the PDE. In particular, if $u$ solves \eqref{e.hjind}, then for any $\rho\in \RR$, $u_{\rho}(x,t):=u(\rho x, \rho^{m+1}t)$ also solves \eqref{e.hjind}. This suggests that the temporal and spatial mesh sizes should satisfy the relation 
\begin{equation}\label{eq:SchemeStepSizes}
(\xmesh)^{m+1}=\tmesh.
\end{equation} 
With this scaling relation, 
we may rewrite \eqref{eq:Scheme} as 
\begin{equation}\label{eq:Scheme2}
F^{n+1}_{k}=F^{n}_{k}-\tmesh\left[rq\left(\frac{F^{n}_{k}-F^{n}_{k-1}}{\xmesh}\right)^{m+1}-(1-r)q\left(\frac{F^{n}_{k+1}-F^{n}_{k}}{\xmesh}\right)^{m+1}\right], 
\end{equation}
which, since $F^n_k$ is nondecreasing in $k$, we may re-express as  
\begin{equation}\label{eq:Scheme3}
F^{n+1}_{k}=F^{n}_{k}-\tmesh\left[rq\left|\frac{F^{n}_{k}-F^{n}_{k-1}}{\xmesh}\right|^{m+1}-(1-r)q\left|\frac{F^{n}_{k+1}-F^{n}_{k}}{\xmesh}\right|^{m+1}\right]. 
\end{equation}
This equation has precisely the form of \eqref{e.genmon} with 
$$G(x,y,z)=y-\tmesh[-(1-r)q|\tfrac{x-y}{\xmesh}|^{m+1}+rq|\tfrac{y-z}{\xmesh}|^{m+1}]=y+(1-r)q|x-y|^{m+1}-rq|y-z|^{m+1},$$ the second equality holding due to (\ref{eq:SchemeStepSizes}).

Now fix probability distributions $\mu,\tilde{\mu}$ on $\Z\cup \{\pm \infty\}$ with $\tilde{\mu}\preceq\mu$, let $(X_n, n \ge 0)$ and $(\tilde{X}_n, n \ge 0)$ be ACM$(m,q,r,\mu)$ and ACM$(m,q,r,\tilde{\mu})$-distributed, respectively, and set $F^n_k=\P(X_n < k)$ and $\tilde{F}^n_k=\P(\tilde{X}_n < k)$, so $(F^n_k)_{k\in \Z, n\in \N}$ and $(\tilde{F}^n_k)_{k\in \Z, n\in \N}$ both satisfy \eqref{eq:Scheme3} but with different initial conditions. The fact that $\tilde{\mu}\preceq\mu$ means that $F^0_k \le \tilde{F}^0_k$. 

For a given $\Lambda > 0$, if $G$ is monotone on $[0,\Lambda]$ and $\sup_{k \in \Z} (F^0_k-F^0_{k-1})\le \Lambda$ and $\sup_{k \in \Z}(\tilde{F}^0_k-\tilde{F}^0_{k-1})\le \Lambda$, 
then \eqref{e.monoprop}
gives that $F^1_k \le \tilde{F}^1_k$, and inductively that $F^n_k \le \tilde{F}^n_k$ for all $n$. In other words, we can think of monotonicity as a sufficient condition which guarantees that the cooperative motion will preserve stochastic ordering in time. In Section \ref{s.goodic}, we will use a variation of this approach to identify the value of $\Lambda$, and thereby a sufficient condition, which guarantees stochastic monotonicity.
\begin{proof}[Proof of Proposition~\ref{p.intlip}] 
Let $N_0:=\left([q(m+1)]^{1/m}(K+1)\right)^{(m+1)}$ and fix $N\geq N_0$. We choose $\xmesh=N^{-1/(m+1)}$, and $\tmesh=N^{-1}$, so that $\xmesh$ and $\tmesh$ satisfy \eqref{eq:SchemeStepSizes}. This implies that $F^{(\cdot)}_{k}(\mu_{N})$ is defined by \eqref{eq:Scheme3}. The proof relies upon verifying the hypotheses of Theorem \ref{t.cl} for $U^n_k=F^n_k$.

Consistency is easily verified in our setting.
The Hamilton-Jacobi equation in question is \eqref{e.genhj}, for which $H(p) = \cons p^{m+1}$. Moreover, 
we have $\hfunc(x,y) = -(1-r)qx^{m+1} + rqy^{m+1}$, so $\hfunc(p,p) = (2r-1)qp^{m+1} = \cons p^{m+1} = H(p)$, as desired.

Now we check monotonicity. As $u_{0}$ is nondecreasing, by Remark \ref{r.mon_new}, we only need to verify that  \eqref{eq:Scheme3} or, equivalently, \eqref{eq:Scheme2} is monotone in $[0, K+1]$. 

To verify monotonicity of \eqref{eq:Scheme2}, we differentiate 
\begin{equation*}
G(F^n_{k+1},F^{n}_{k}, F^{n}_{k-1})=F^{n}_{k}-rq\tmesh\left(\frac{F^{n}_{k}-F^{n}_{k-1}}{\xmesh}\right)^{m+1}+(1-r)q\tmesh\left(\frac{F^{n}_{k+1}-F^{n}_{k}}{\xmesh}\right)^{m+1}
\end{equation*}
in each argument, in the region $0\leq (\xmesh)^{-1}(F^{n}_{k+1}-F^{n}_{k}),(\xmesh)^{-1}(F^{n}_{k}-F^{n}_{k-1})\leq K+1$. Differentiating the right hand side with respect to $F^{n}_{k}$, we have 
\begin{align*}
&1-rq(m+1)\frac{\tmesh}{\xmesh}\left(\frac{F^{n}_{k}-F^{n}_{k-1}}{\xmesh}\right)^{m}-(1-r)q(m+1)\frac{\tmesh}{\xmesh}\left(\frac{F^{n}_{k+1}-F^{n}_{k}}{\xmesh}\right)^{m}\\
&\geq 1-rq(m+1)\frac{\tmesh}{\xmesh}(K+1)^{m}-(1-r)q(m+1)\frac{\tmesh}{\xmesh}(K+1)^{m}\\
&=1-q(m+1)(\xmesh)^{m}(K+1)^{m}. 
\end{align*}
As $N\geq N_0$, we have that 
\begin{equation*}
(\xmesh)^{m}=N^{-m/(m+1)}\leq [q(m+1)]^{-1}(K+1)^{-m},
\end{equation*}
which implies that $G(F^n_{k+1},\cdot, F^{n}_{k-1})$ is nondecreasing. 
Similar computations show that both $G(\cdot,F^{n}_{k}, F^n_{k-1})$ and $G(F^n_{k+1},F^{n}_{k}, \cdot)$ are nondecreasing. 
This implies that \eqref{eq:Scheme2} is monotone on $[0, K+1]$, so by Theorem \ref{t.cl}, we then have that for $N\geq N_0$, for any $T>0$, 
there is $c=c(K,m,q,T)$ such that 
\begin{equation*}
\sup_{0 \le j/N \le T} \sup_{k \in \Z} \left|F^{j}_{k}-u\left(\frac{k}{N^{1/(m+1)}}, \frac{j}{N}\right)\right|\leq cN^{-\frac{1}{2}};
\end{equation*}
recall that $\tmesh = N^{-1}$, so $j/N=j\tmesh$. 
We may rewrite this bound as 
\begin{equation*}
\sup_{0 \le t \leq T}\sup_{k \in \Z} \left|F^{\lfloor Nt\rfloor}_{k}-u\left(\frac{k}{N^{1/(m+1)}}, \frac{1}{N}\lfloor Nt \rfloor \right)\right|\leq cN^{-\frac{1}{2}}. 
\end{equation*}

By Proposition \ref{p.regularity}, the continuous viscosity solution $u$ solving \eqref{e.genhj} is globally Lipschitz continuous in space and time. Therefore, 
\begin{align*}
&\sup_{0 \le t \leq T}\sup_{k \in \Z} \left|F^{\lfloor Nt\rfloor}_{k}-u\left(\frac{k}{N^{1/(m+1)}}, t \right)\right|\\
&\leq  \sup_{0 \le t \leq T}\sup_{k \in \Z} \left|F^{\lfloor Nt\rfloor}_{k}-u\left(\frac{k}{N^{1/(m+1)}}, \frac{1}{N}\lfloor Nt \rfloor \right)\right|\\
& 
\quad +
\sup_{0 \le t \leq T}\sup_{k \in \Z} \left| u\left(\frac{k}{N^{1/(m+1)}}, \frac{1}{N}\lfloor Nt \rfloor \right)-u\left(\frac{k}{N^{1/(m+1)}}, t \right)\right|\\
&\leq cN^{-\frac{1}{2}}+ C\sup_{0\leq t\leq T}\left|\frac{1}{N}\lfloor Nt \rfloor-t\right|\leq \tilde cN^{-\frac{1}{2}},
\end{align*}
and this yields \eqref{e.invp}; equation~\eqref{e.invp1} follows as it is simply a restating of \eqref{e.invp} in the special case when $t=1$. Finally, \eqref{e.invp1} gives that $u(x,1)$ is the pointwise limit of an extended CDF, so $u(x,1)$ is itself an extended CDF. 
\end{proof}

\section{``Good'' singular initial conditions}\label{s.goodic}

The convergence results of the previous section require that the finite difference scheme $(F^n_k)_{k\in \Z, n\in \N}$ begins with an initial condition $\mu_N$ which is a discretization of a Lipschitz function at scale $\xmesh$ (depending on $N$). In this section, we build on those convergence results to prove distributional limit theorems for certain fixed (rather than varying in $N$) initial conditions. 
Let 
\begin{equation}\label{e.p*def}
p^* = \left(\frac{1}{q(m+1)}\right)^{1/m}.
\end{equation}
Note that $p^* > 1/2$ whenever $m$ and $q$ are not both equal to 1, and $p^*=1/2$ if and only if $m = q = 1$. 
We say that an extended probability distribution $\mu$ is {\em $p^*$-bounded} if 
\begin{equation*}
\sup_{x \in \Z \cup \{-\infty,\infty\}} \mu(\{x\}) \leq p^*.
\end{equation*}
The goal of this section is to prove the following proposition, which essentially states that Theorem~\ref{thm:main2} holds for $p^*$-bounded initial conditions. 
\begin{prop}\label{prop:goodSingIC_conv}
Let ($X_n$, $n \geq 0$) be \acm-distributed with $\mu$ a probability distribution on $\Z$. If $\mu$ is $p^*$-bounded, then 
$$\lim_{n\to\infty} \P\left(\frac{X_n}{n^{1/(m+1)}} <x\right) = u(x, 1)$$
uniformly in $x$, where $u(x,t)$ is given by \eqref{eq:explicitsolution}.
\end{prop}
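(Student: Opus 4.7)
The plan is a sandwich argument combining the stochastic monotonicity lemma (Lemma~\ref{lem:StochasticOrdering}) with Proposition~\ref{p.intlip}. For each $\eps \in (0, p^*/2)$, construct two Lipschitz extended CDFs $u_0^{-,\eps}, u_0^{+,\eps}$ satisfying $u_0^{-,\eps} \le \indc_{y>0} \le u_0^{+,\eps}$ pointwise, with Lipschitz constants of order $1/\eps$, and both converging pointwise to $\indc_{y>0}$ as $\eps \downarrow 0$. A concrete choice: take $u_0^{+,\eps}$ equal to $\eps$ on $(-\infty,-2\eps]$, rising linearly to $1$ on $[-2\eps,-\eps]$, and equal to $1$ on $[-\eps,\infty)$; and $u_0^{-,\eps}$ equal to $0$ on $(-\infty,\eps]$, rising linearly to $1-\eps$ on $[\eps,2\eps]$, and equal to $1-\eps$ on $[2\eps,\infty)$. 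These are extended CDFs carrying mass $\eps$ at $-\infty$ and $+\infty$ respectively.

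For each $N$, let $\mu_N^{\pm,\eps}$ be the distribution on $\Z\cup\{\pm\infty\}$ with CDF $\mu_N^{\pm,\eps}[-\infty,k) = u_0^{\pm,\eps}(kN^{-1/(m+1)})$. Two facts must be verified for $N$ sufficiently large depending on $\eps$ and $\mu$. First, $\mu_N^{+,\eps} \preceq \mu \preceq \mu_N^{-,\eps}$: since $F_0(k) := \mu[-\infty,k)$ tends to $0$ (resp.\ to $1$) as $k \to -\infty$ (resp.\ $+\infty$), the required coordinatewise inequalities reduce to $F_0(-2\eps N^{1/(m+1)}) \le \eps$ and $F_0(\eps N^{1/(m+1)}) \ge 1-\eps$, both automatic for $N$ large. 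Second, the sandwich distributions $\mu_N^{\pm,\eps}$ are themselves $p^*$-bounded: the choice $\eps < p^*$ handles the atoms at $\pm\infty$, and each finite atom has mass at most $(1/\eps)N^{-1/(m+1)} < p^*$ once $N > (p^*\eps)^{-(m+1)}$.

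Granted these, Lemma~\ref{lem:StochasticOrdering} propagates the stochastic sandwich for all times, while Proposition~\ref{p.intlip} provides a constant $c(\eps)$ such that the CDFs at time $N$ of the sandwiching CM-processes agree with $u^{\pm,\eps}(kN^{-1/(m+1)},1)$ up to error $c(\eps)/\sqrt{N}$, where $u^{\pm,\eps}$ is the continuous viscosity solution of~\eqref{e.genhj} with initial data $u_0^{\pm,\eps}$. Setting $k = \lfloor xN^{1/(m+1)}\rfloor$, absorbing the floor via spatial Lipschitz continuity of $u^{\pm,\eps}(\cdot,1)$, and sending $N\to\infty$ yields
\begin{equation*}
u^{-,\eps}(x,1) \ \le\ \liminf_{N\to\infty}\P\Bigl(\tfrac{X_N}{N^{1/(m+1)}}<x\Bigr) \ \le\ \limsup_{N\to\infty}\P\Bigl(\tfrac{X_N}{N^{1/(m+1)}}<x\Bigr) \ \le\ u^{+,\eps}(x,1).
\end{equation*}
Now send $\eps \downarrow 0$: the Hopf--Lax formula~\eqref{e.hl} together with the pointwise convergence $u_0^{\pm,\eps} \to \indc_{y>0}$ and the superlinearity of $H$ yields $u^{\pm,\eps}(x,1) \to u(x,1)$, with $u$ as in~\eqref{eq:explicitsolution}. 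This gives pointwise convergence, and uniformity then follows from monotonicity of the CDFs together with continuity of the limit $u(\cdot,1)$.

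The main obstacle is that the Lipschitz constant of the sandwiching initial data blows up as $\eps\downarrow 0$, so the sandwich bounds from Proposition~\ref{p.intlip} are not uniform in $\eps$: $N$ must be taken large relative to $\eps$ in order to keep $\mu_N^{\pm,\eps}$ within the monotonicity regime and to make Proposition~\ref{p.intlip} applicable. This forces the specific order of limits $N\to\infty$ then $\eps\downarrow 0$, and also makes clear why the $p^*$-bounded hypothesis on $\mu$ is essential: the entire argument must remain inside the regime where discrete stochastic monotonicity holds, and without this hypothesis on $\mu$ one cannot simultaneously sandwich $\mu$ by Lipschitz discretizations that themselves lie in this regime.
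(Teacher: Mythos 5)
Your argument is correct and is essentially the paper's proof: sandwich the $p^*$-bounded initial condition between two Lipschitz extended CDFs discretized at scale $N^{-1/(m+1)}$, propagate the ordering by stochastic monotonicity, apply Proposition~\ref{p.intlip} to the two outer schemes, and take $N\to\infty$ before $\eps\downarrow 0$. Two remarks. First, citing Lemma~\ref{lem:StochasticOrdering} alone does not "propagate the sandwich for all times": that lemma is a one-step statement, and iterating it requires knowing that the time-$n$ laws of all three processes remain $p^*$-bounded, which is exactly what Lemma~\ref{lem:discrete1stepdynamics} supplies (the paper packages the induction as Remark~\ref{r.newmon}); you should invoke that as well. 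Second, your choice of sandwiching profiles differs from the paper's in a way that shifts where the work lies: the paper takes the profiles to be the time-$\eps$ snapshots $u^{\eps,1}(\cdot,\eps)$ and $u^{0,1-\eps}(\cdot,\eps)$ of the explicitly solved problems \eqref{eq:scaledICsoln1}--\eqref{eq:scaledICsoln2}, so that Proposition~\ref{p.intlip} delivers the closed-form limits $u^{\eps,1}(\cdot,1+\eps)$ and $u^{0,1-\eps}(\cdot,1+\eps)$ and the $\eps\downarrow 0$ step is immediate, whereas your piecewise-linear ramps require the extra (but routine) Hopf--Lax comparison $u(x,1)\le u^{+,\eps}(x,1)\le \eps+u(x+2\eps,1)$ and its analogue for $u^{-,\eps}$ to identify the $\eps\downarrow 0$ limit; conversely, by letting the sandwich inequalities $\mu_N^{+,\eps}\preceq\mu\preceq\mu_N^{-,\eps}$ hold only for $N$ large depending on $(\mu,\eps)$, you avoid the paper's auxiliary translation parameter $\tilde n$ and the attendant third limit, which is a genuine simplification of the bookkeeping. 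Both routes are sound.
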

The proof of Proposition \ref{prop:goodSingIC_conv} relies on comparison between the ACM evolution with $p^*$-bounded initial conditions to ACM evolutions with Lipschitz continuous initial conditions. 
To establish the possibility of such comparisons, we prove that the ACM evolution is stochastically monotone on a much broader class of initial conditions than what is covered by 
Proposition~\ref{p.intlip}. (It may be useful to revisit the discussion preceding the proof of Proposition~\ref{p.intlip} at this point.) 
We first show for the class of $p^*$-bounded distributions, stochastic ordering is preserved in one time-step of the ACM evolution. We then show that the CDFs at future time steps remain in the family of $p^*$-bounded distributions. This is exactly the content of the next two lemmas: 

\begin{lemma}\label{lem:StochasticOrdering}
Let $\mu_X$ and $\mu_Y$ be $p^*$-bounded probability distributions on $\Z \cup \{-\infty, \infty\}$, and let $(X_n, n\geq 0)$ be $\mathrm{ACM}(m,q,r,\mu_X)$-distributed and $(Y_n,n\geq 0)$ be $\mathrm{ACM}(m,q,r,\mu_Y)$-distributed. If $Y_0 \preceq X_0$ then $Y_1 \preceq X_1$. 
\end{lemma}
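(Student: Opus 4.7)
The plan is to work directly with the one-step recursion from \eqref{eq:Scheme} and verify $F^1_k \le G^1_k$ pointwise, where $F^n_k = \P(X_n < k)$ and $G^n_k = \P(Y_n < k)$. The hypothesis $Y_0 \preceq X_0$ reads $F^0_k \le G^0_k$ for every $k \in \Z$. I would introduce the gap $\delta_k := G^0_k - F^0_k \ge 0$ and the (non-negative) atoms $a := F^0_k - F^0_{k-1} = \mu_X(\{k-1\})$ and $b := G^0_k - G^0_{k-1} = \mu_Y(\{k-1\})$, so that the $p^*$-boundedness of $\mu_Y$ contributes the single crucial input $b < p^*$. Subtracting the two one-step recursions yields
\begin{equation*}
G^1_k - F^1_k \;=\; \delta_k \;-\; q\bigl(b^{m+1} - a^{m+1}\bigr),
\end{equation*}
so the task reduces to showing this quantity is non-negative for every $k \in \Z$.

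I would split on the sign of $b - a$. If $b \le a$, then $0 \le b^{m+1} \le a^{m+1}$ makes the bracketed term non-positive and $G^1_k - F^1_k \ge \delta_k \ge 0$ immediately. If $b > a$, I would use the mean value estimate
\begin{equation*}
b^{m+1} - a^{m+1} \;=\; \int_a^b (m+1)\, s^m\, ds \;\le\; (m+1)\, b^m\, (b-a),
\end{equation*}
which reduces the problem to controlling $q(m+1) b^m (b-a)$ by $\delta_k$. Two observations finish the job: first, $b - a = \delta_k - \delta_{k-1} \le \delta_k$ since $\delta_{k-1} \ge 0$; second, by the definition \eqref{e.p*def} of $p^*$ together with $b < p^*$,
\begin{equation*}
q(m+1)\, b^m \;<\; q(m+1)\, (p^*)^m \;=\; 1.
\end{equation*}
Multiplying these gives $q(b^{m+1} - a^{m+1}) < (b - a) \le \delta_k$, hence $G^1_k - F^1_k > 0$.

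The step I expect to be the main obstacle is recognizing why the ``two-step'' monotonicity argument sketched around \eqref{e.monoprop} is not directly available here. That argument would want to push $F^0_k$ up to $G^0_k$ with $F^0_{k-1}$ held fixed, then push $F^0_{k-1}$ up to $G^0_{k-1}$, invoking coordinate-wise monotonicity of $G(y,z) = y - q(y-z)^{m+1}$ on the region $0 \le y - z \le p^*$. The intermediate configuration $(G^0_k, F^0_{k-1})$, however, has gap $b + \delta_{k-1}$, which can exceed $p^*$ whenever $\delta_{k-1}$ is not small; this places us outside the region where $G$ is monotone in its first argument. The MVT estimate above sidesteps this by keeping only the jump $b$ on the larger (in the stochastic order) side, which is exactly the quantity the hypothesis controls; notably, the $p^*$-boundedness of $\mu_X$ is not actually needed at this step.
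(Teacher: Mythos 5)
Your proof is correct. The paper's own proof is shorter but rests on the same underlying inequality: it observes that $G(y,z)=y-q(y-z)^{m+1}$ is nondecreasing in each argument on the region $0\le y-z\le p^*$ (precisely because $1-q(m+1)(y-z)^m\ge 0$ there, which is the bound $q(m+1)b^m\le 1$ you extract from $b<p^*$), notes that both $(F^0_k,F^0_{k-1})$ and $(\tilde F^0_k,\tilde F^0_{k-1})$ lie in that region since their increments are atoms of $p^*$-bounded measures, and concludes $G(F^0_k,F^0_{k-1})\le G(\tilde F^0_k,\tilde F^0_{k-1})$. Your worry about the intermediate configuration $(G^0_k,F^0_{k-1})$ leaving the monotonicity region is a fair observation about the coordinate-wise path, but it does not actually invalidate the paper's argument: one compares the two configurations along the straight-line segment joining them, along which the difference of the two arguments is a convex combination of $a$ and $b$ and hence stays in $[0,p^*]$, so $G$ is nondecreasing along the segment. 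Your mean-value estimate on $b^{m+1}-a^{m+1}$, combined with $b-a=\delta_k-\delta_{k-1}\le\delta_k$, is an equivalent and fully explicit way of carrying out that one-dimensional comparison, and it has a genuine side benefit: it isolates exactly which hypothesis is used, namely only the $p^*$-boundedness of $\mu_Y$ (the stochastically smaller law). This refinement is consistent with the counterexample the paper gives immediately after the lemma, where it is the smaller variable's oversized atom that destroys the ordering. So your argument is a correct, slightly sharpened rendering of essentially the same proof.
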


\begin{lemma}\label{lem:discrete1stepdynamics}
Let $(X_n, n\geq 0)$ be $\acm$-distributed and define $P(X_n = k) = p^n_k$. Then for all $k \in \Z$

\begin{equation}\label{eq:1stepinequality}
p^{n+1}_k \leq \max(p^n_{k-1}, p^n_k, p^n_{k+1})
\end{equation}
\end{lemma}

\begin{proof}[Proof of Lemma \ref{lem:StochasticOrdering}]
Recall that $G(x,y,z) = y+(1-r)q|x-y|^{m+1}-rq|y-z|^{m+1}$. Note that for $0 \le z \le y \leq x$, $G(x,y,z)$ is nondecreasing in $x$ and $z$, and for $0 \le z \le y \le x$, $G$ is nondecreasing in $y$ provided that  
\[
1-(m+1)rq(y-z)^m -(m+1)(1-r)q(x-y)^m \ge 0\, ,
\]
which is true whenever $y-z,x-y \le p^*$. Therefore, $G$ is monotone on $[0,p^*]$. 

Now write $F^0_k = \P(X_0 < k) = \mu_X[-\infty,k)$ and $\tilde{F}^0_k =\P(Y_0 < k) = \mu_Y[-\infty,k)$. 
Since $Y_0 \preceq X_0$, we have that for all $k \in \Z$, $F^0_k \le \tilde{F}^0_k$, and moreover, $F^0_k - F^0_{k-1} \le p^*$ and $\tilde{F}^0_k - \tilde{F}^0_{k-1} \le p^*$. Since $G$ is monotone on $[0,p^*]$ it follows that 
\[
\P(X_1 < k) = F^1_k = G(F^0_{k+1},F^0_k,F^0_{k-1}) \le 
G(\tilde{F}^0_{k+1}\tilde{F}^0_k,\tilde{F}^0_{k-1})=\tilde{F}^1_k
=\P(Y_1 < k)\, ,
\]
so $Y_1 \preceq X_1$, as required. 
\end{proof}
The choice of $p^*$ as an upper bound for a single site probability in Lemma \ref{lem:StochasticOrdering} is sufficient, but since $p^*$ is independent of $r$, one may ask whether $p^*$ is the tightest upper bound which gives stochastic ordering of the process. In fact, it is not in general: a little algebra shows that the necessary upper bound is the value $M \geq 1/2$ such that 
\[
\frac{1}{q(m+1)} = rM^{m} + (1-r)(1-M)^{m}.
\]
From this definition and the fact that $M \geq 1-M$, it follows that $M \geq p^*$. There are cases when $M=p^*$, in particular when $r=1$, or when $m=q=1$, but this equality does not hold in general. For example, if $m = 1, q = 2/3, r = 2/3$, we see that $p^* = 3/4$ while $M = 5/4$. 

We next introduce an additional technical lemma needed in the proof of Lemma~\ref{lem:discrete1stepdynamics}. Write $f(p):=p-qp^{m+1}$; we will use that $f$ is  increasing on $[0,p^*)$ and decreasing on $(p^*,1]$.

\begin{lemma}\label{lem:h(x,y)facts}
Let $\gfunc(x,y) = f(x) - f(y)$ with $f(x)=x-qx^{m+1}$. If $0 < q \leq 1$, then $\gfunc(a,b)\geq 0$ whenever $a \geq b \geq 0$ and $a + b \leq 1$. Under the additional constraint $a > p^*$, we have
\begin{equation}\label{eq:h(x,y)bdd>0}
\gfunc(a,b) \geq \min(\gfunc(p^*,1-p^*),1-q) \geq 0.
\end{equation}
Moreover, when $q < 1$, $\min(\gfunc(p^*,1-p^*),1-q) > 0$. 
\end{lemma}

\begin{proof}
First, we note that
\begin{align*}
\frac{\partial}{\partial	x} \gfunc(x,y) &= 1-q(m+1)x^m,\\
\frac{\partial}{\partial	y} \gfunc(x,y) &= -1+q(m+1)y^m.
\end{align*}
We are concerned with the behavior of the function $h$ in the regions $A$ and $B$ shown in Figure~\ref{fig:g_zones}. Formally, if $C=\{(x,y):0 \le y \le x,y \le 1-x\}$, then $A=\{(x,y) \in C: x \le p^*\}$ and $B=\{(x,y) \in C: x > p^*\}$.

\begin{figure}[htb]
\begin{tikzpicture}

\begin{axis}[
xmin = 0, 
xmax=1, 
ymin=0, 
ymax=1, 
xtick = {0,.5,.75,1}, 
ytick = {0,.5,.75,1}, 
xticklabels = {$0$, $1/2$, $p^*$, $1$}, 
yticklabels = {$0$, $1/2$, $p^*$, $1$}]

\addplot[name path = V2, dashed] coordinates {(.75,0)(.75,1)};
\addplot[name path = H2, dashed] coordinates {(0,.75)(1,.75)};
\addplot[name path = D1, ultra thick, color = red] {x};
\addplot[name path = D2, ultra thick, color = blue] {1-x};
\path [name path = axis] (axis cs:0,0) -- (axis cs: 1,0);

\addplot[fill = red, opacity=0.4] fill between [of = D1 and axis, soft clip = {domain=0:0.5}];
\addplot[fill = red, opacity=0.4] fill between [of = D2 and axis, soft clip = {domain=0.5:0.75}];

\node at (axis cs: .5,.2) {$A$};

\addplot[fill = teal, opacity=0.4] fill between [of = D2 and axis, soft clip = {domain = .75:1}];

\node at (axis cs: .82,.1) {$B$};

\end{axis}
\end{tikzpicture}
\caption{Lemma~\ref{lem:h(x,y)facts} states that $\gfunc(a,b)$ is positive for $(a,b) \in A$ and is greater than $\min(\gfunc(p^*,1-p^*),1-q)$ for $(a,b)\in B$.}
\label{fig:g_zones}
\end{figure}
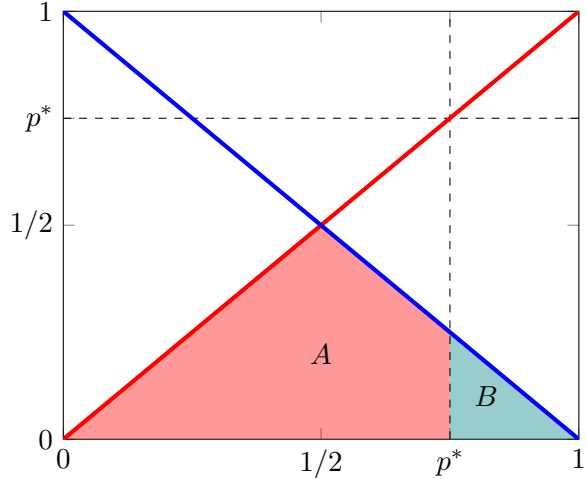

If $(x,y) \in A$ then since $f$ is increasing on $[0,p^*)$ and in this region $0 \le y \le x \le p^*$, it follows that $\gfunc(x,y)=f(x)-f(y) \geq 0$. 

To determine the behavior in region $B$, notice that in this region, $\frac{\partial h}{\partial x} < 0$ and $\frac{\partial h}{\partial y} < 0$. Therefore, 
\[
\inf_{(x,y)\in B} \gfunc(x,y)=\inf_{x \in (p^*,1]} \gfunc(x,1-x).
\]
Moreover, since $x > 1/2$ and 
\[\frac{\partial^2}{\partial x^2} \gfunc(x,1-x) = q(m+1)m((1-x)^{m-1} - x^{m-1})\, ,
\]
the difference $(1-x)^{m-1}-x^{m-1}$ is strictly negative. This imples that $\gfunc(x,1-x)$ is strictly concave for $x \in (1/2,1)$. Since $p^* \geq 1/2$, we thus have 
\begin{equation}\label{e.inf_reduction}
\inf_{(x,y)\in B} \gfunc(x,y)=\min_{x \in \{p^*,1\}} \gfunc(x,1-x)\geq 
\min_{x \in \{1/2,1\}} \gfunc(x,1-x).
\end{equation}
Since $\gfunc(1/2,1/2)=0$ and $\gfunc(1,0)=1-q \geq 0$, it follows that $\gfunc(x,1-x) \geq 0$ for all $x \in [1/2,1]$. 

From the relations in \eqref{e.inf_reduction} and the fact that $\gfunc(x,1-x)$ is strictly concave on $(1/2,1)$, we see that when $x > p^*$
\[
\gfunc(x,y) \geq \min(\gfunc(p^*,1-p^*),1-q) \geq 0,
\]
as desired. Notice that when $q < 1$, we have that $p^* > 1/2$, so the strict concavity of $\gfunc(x,1-x)$ tells us that in this case $\gfunc(p^*,1-p^*)>0$. This fact proves the last statement of the lemma; when $q < 1$, we have that $\min(\gfunc(p^*,1-p^*),1-q)>0$.
\end{proof}

Equipped with this technical lemma, we can now prove Lemma \ref{lem:discrete1stepdynamics}. 

\begin{proof}[Proof of Lemma \ref{lem:discrete1stepdynamics}]
We prove the lemma in cases, according to which site has the maximum value. In each case, we will show that $\max(p^n_{k-1}, p^n_k, p^n_{k+1}) - p^{n+1}_k \geq 0$.

Recall the recursion equation for $p^{n+1}_k$:
\begin{equation*}
p^{n+1}_k = p^{n}_k + q\left[r(p^n_k)^{m+1}-(p^n_{k-1})^{m+1} + (1-r)(p^n_{k+1})^{m+1})^{m+1}\right]. 
\end{equation*} 
First, when $p^n_k = \max(p^n_{k-1}, p^n_k, p^n_{k+1})$, we have  
\begin{align*}
p^n_k - p^{n+1}_k &= -q\left[r(p^n_{k-1})^{m+1}-(p^n_k)^{m+1}+(1-r)(p^n_{k+1})^{m+1}\right]\\
&\geq -q\left[r\left(p^n_k\right)^{m+1}-(p^n_k)^{m+1}+(1-r)\left(p^n_k\right)^{m+1}\right]\\
&= 0,
\end{align*}
where we have used the fact that the function $x^{m+1}$ is increasing for positive $x$ values. 

Second, we consider the case when $p^n_{k+1} = \max(p^n_{k-1}, p^n_k, p^n_{k+1})$. 
In this case, we have  
\begin{align*}
p^n_{k+1} - p^{n+1}_k &= p^n_{k+1}-\left[p^n_k+ qr(p^n_{k-1})^{m+1}-q(p^n_k)^{m+1}+(1-r)q(p^n_{k+1})^{m+1}\right]\\
&\geq p^n_{k+1}-\left[p^n_k+ qr(p^n_{k+1})^{m+1}-q(p^n_k)^{m+1}+(1-r)q(p^n_{k+1})^{m+1}\right]\\
&= \left[p^n_{k+1}-q(p^n_{k+1})^{m+1}\right]-\left[p^n_{k}-q(p^n_{k})^{m+1}\right]\\
&= \gfunc(p^n_{k+1}, p^n_k),
\end{align*}
where $\gfunc(x,y)$ is defined as in Lemma \ref{lem:h(x,y)facts}. Because $p^n_{k+1} \geq p^n_k$ and $p^n_{k+1}+p^n_k \leq 1$, we can apply Lemma \ref{lem:h(x,y)facts} to say that $p^n_{k+1}-p^{n+1}_k\geq 0$. 

Finally, the case when $p^n_{k-1} = \max(p^n_{k-1}, p^n_k, p^n_{k+1})$ can be proved by a symmetric argument to that of the second case. 
\end{proof}

\begin{remark}\label{r.newmon}
Combining Lemma \ref{lem:StochasticOrdering} and Lemma \ref{lem:discrete1stepdynamics}, we are able to identify a precise value of $\Lambda$ which guarantees stochastic monotonicity, as discussed just above the proof of Proposition \ref{p.intlip}.  In particular,  if $F^{0}_{k}$ and $\tilde{F}^{0}_{k}$ are two CDFs such that for all $k$, $F^{0}_{k}\leq \tilde{F}^{0}_{k}$ and 
\begin{equation}\label{e.weakmon}
0\leq F^{0}_{k}-F^{0}_{k-1}\leq p^{*}\quad\text{and}\quad 0\leq \tilde{F}^{0}_{k}-\tilde{F}^{0}_{k-1}\leq p^{*}\ , 
\end{equation}
then Lemma \ref{lem:StochasticOrdering} guarantees that $F^{1}_{k}\leq \tilde{F}^{1}_{k}$, and Lemma \ref{lem:discrete1stepdynamics} guarantees that $F^{1}_{k}$ and $\tilde{F}^{1}_{k}$ both satisfy \eqref{e.weakmon}. We may then conclude (by induction) that $F^n_k\le \tilde{F}^n_k$ for all $n \in \N$ and $k \in \Z$, so 
the ACM evolution is stochastically monotone for the  corresponding initial distributions $\mu$ and $\tilde{\mu}$.
\end{remark}

Now we are ready to prove Proposition \ref{prop:goodSingIC_conv}. We will do so by relating the process starting from a $p^*$-bounded initial condition to a sequence of processes which begin from a discretization of a Lipschitz function. We will then be able to use Proposition \ref{p.intlip} for Lipschitz continuous initial data to yield convergence in this extended setting.

\begin{proof}[Proof of Proposition \ref{prop:goodSingIC_conv}]
Fix $\varepsilon > 0$. 
Let $(X_n,n \ge 0)$ be \acm-distributed. Then the collection of values $F^n_k=\P(X_n < k)$ satisfy the recursive relationship \eqref{eq:Scheme} with initial condition $F^0_k=\P(X_0<k)=\mu(-\infty,k)$. 
We note that the values $F^n_k$ also satisfy \eqref{eq:Scheme3} when $\xmesh$ and $\tmesh$ are chosen so that $(\xmesh)^{m+1}=\tmesh$. We enforce this relation between $\xmesh$ and $\tmesh$ throughout the proof. 

We will sandwich $F^n_k$ between two solutions of \eqref{eq:Scheme3} with smoother initial conditions. 
To this end, define $u^{\varepsilon,1}$ as the lsc viscosity solution of 
\begin{equation*}
\begin{cases}
u^{\varepsilon,1}_t + \cons\left|u^{\varepsilon,1}_x\right|^{m+1} = 0&\text{in $\RR\times (0, \infty)$,}\\
u^{\varepsilon,1}(x,0) = \varepsilon\1_{\left\{x\leq 0\right\}} + \1_{\left\{x>0\right\}}&\text{in $\RR$,}
\end{cases}
\end{equation*}
and let $u^{0,1-\varepsilon}$ denote the lsc viscosity solution of 
\begin{equation*}
\begin{cases}
u^{0,1-\varepsilon}_t + \cons\left|u^{0,1-\varepsilon}_x\right|^{m+1} = 0&\text{in $\RR\times (0, \infty)$,}\\
u^{0,1-\varepsilon}(x,0) = (1-\varepsilon)\1_{\left\{x>0\right\}}&\text{in $\RR$.}
\end{cases}
\end{equation*}
Setting 
\begin{equation}\label{eq:Sdefn}
S := S(\varepsilon) = (1-\varepsilon)^{m/(m+1)}(m+1)\cons^{1/(m+1)}m^{-m/(m+1)}\varepsilon^{1/(m+1)}, 
\end{equation}
by \eqref{eq:scaledICsoln}, these solutions have the explicit forms 
\begin{equation}\label{eq:scaledICsoln1}
u^{\ve,1}(x,t)=\begin{cases}
\ve&\text{if $x\leq 0$}, \\
\ve+\frac{m}{\cons^{\frac{1}{m}}(m+1)^{\frac{m+1}{m}}}\left(\frac{x^{m+1}}{t}\right)^{\frac{1}{m}}&\text{if $0\leq x \leq S(\varepsilon)(t/\varepsilon)^{1/(m+1)}$,}\\
1&\text{otherwise,}
\end{cases}
\end{equation}
and 
\begin{equation}\label{eq:scaledICsoln2}
u^{0,1-\ve}(x,t)=\begin{cases}
0&\text{if $x\leq 0$}, \\
\frac{m}{\cons^{\frac{1}{m}}(m+1)^{\frac{m+1}{m}}}\left(\frac{x^{m+1}}{t}\right)^{\frac{1}{m}}&\text{if $0\leq x \leq S(\varepsilon)(t/\varepsilon)^{1/(m+1)}$,}\\
1-\ve&\text{otherwise.}
\end{cases}
\end{equation}
In particular, $u^{\varepsilon,1}(x,t)=u^{0,1-\varepsilon}(x,t)+\varepsilon$. We also see from 
\eqref{eq:scaledICsoln1} and \eqref{eq:scaledICsoln2} 
that both $u^{\varepsilon,1}(x,\varepsilon), u^{0,1-\varepsilon}(x,\varepsilon)$ are Lipschitz continuous with the same Lipschitz constant $K=K(\ve)$, and therefore, there exists an $\eta=\eta(\ve)$ sufficiently small such that if $\xmesh \leq \eta$, then 
\begin{equation}\label{e.discretelip}
\begin{cases}
0\leq u^{\varepsilon,1}(x+\xmesh,\varepsilon) - u^{\varepsilon,1}(x,\varepsilon) \leq p^*&\text{for all $x\in \RR$,}\\ 
0\leq u^{0,1-\varepsilon}(x+\xmesh,\varepsilon) - u^{0,1-\varepsilon}(x,\varepsilon) \leq p^*&\text{for all $x\in \RR$.}
\end{cases}
\end{equation} 
Also, by our explicit representation of $u^{\varepsilon,1}$ in \eqref{eq:scaledICsoln1}, we have 
\begin{equation}\label{e.uve1bnd}
\begin{aligned}
&u^{\varepsilon,1}(x,\varepsilon) \geq \varepsilon\quad \text{for all $x$,}\\
&u^{\varepsilon,1}(x,\varepsilon) =1\quad\text{for all $x \geq S(\varepsilon)$}. 
\end{aligned}
\end{equation}
Now, define 
\begin{align*}
L & := L(\varepsilon) = \max\{k\leq 0:F^0_k\leq \varepsilon\}, \\
R & := R(\varepsilon) = \min\{k\geq 0: F^0_k \geq 1-
\varepsilon\}.
\end{align*}
These values are both finite because $\mu$ is a probability distribution on $\Z$ and hence $\lim_{k \to -\infty} F^0_k=0$ and $\lim_{k \to \infty} F^0_k=1$.
Then, for $\tilde n \in \N$, let $F^{+, n}_k(\tilde n)$  and $F^{-, n}_k(\tilde n)$ be the schemes defined by \eqref{eq:Scheme3}, with $(\xmesh)^{m+1}=\tmesh$ as always, and with initial conditions 
\begin{equation}\label{eq:U+IC}
F^{+, 0}_k(\tilde n)= u^{\varepsilon,1}(k\xmesh -L\tilde n^{-1/(m+1)}+S,\varepsilon) 
\end{equation}
and 
\begin{equation}\label{eq:U-IC}
F^{-,0}_k (\tilde n)= u^{0,1-\varepsilon}(k\xmesh-R\tilde n^{-1/(m+1)}, \ve),
\end{equation}
respectively. We use the parameter $\tilde n$ to spatially shift the initial conditions in order to obtain ordered initial conditions. 

By the definition of $L$, for $k < L$ we have 
\[
F^0_k \le \varepsilon \le F^{+, 0}_k(\tilde n), 
\]
the second inequality holds because $F^{+, 0}_k(\tilde n) \ge \varepsilon$ for all $k$ by \eqref{e.uve1bnd}. For $k \ge L$, if $\xmesh \le \tilde{n}^{-1/(m+1)}$ then since $L \le 0$ we have 
$k \xmesh - L\tilde{n}^{-1/(m+1)}+S \ge S$, so 
 also by \eqref{e.uve1bnd},
\[
F^{+, 0}_k(\tilde n)= u^{\varepsilon,1}(k\xmesh -L\tilde n^{-1/(m+1)}+S,\varepsilon) = 1 \ge F^0_k\, .
\]
Therefore, if $\xmesh \le \tilde{n}^{-1/(m+1)}$ then $F^{+, 0}_k(\tilde n) \ge F^0_k$ for all $k \in \Z$. 

Similarly, by the definition of $R$ and \eqref{eq:scaledICsoln2}, for $k > R$, we have 
\[
F^{-, 0}_k(\tilde n) \le 1-\varepsilon \le F^0_k\, ,
\]
and if $\xmesh \le \tilde{n}^{-1/(m+1)}$, then for $k \le R$ we have $k\xmesh-R\tilde{n}^{-1/(m+1)}\le 0$, so by \eqref{eq:scaledICsoln2},
\[
F^{-, 0}_k(\tilde n) = 0 \le F^0_k.
\]
Thus if $\xmesh \le \tilde{n}^{-1/(m+1)}$ then 
$F^{-, 0}_k(\tilde n) \le F^0_k$ for all $k \in \Z$.

Combining the two preceding paragraphs, we obtain that if $\xmesh \le \tilde{n}^{-1/(m+1)}$ then for all $k$, 
\begin{equation*}\label{e.sandwich_new}
F^{-,0}_k(\tilde n) \leq F^{0}_k \leq F^{+,0}_k(\tilde n)\, .
\end{equation*}
If also $\xmesh \le \eta$, then 
by \eqref{e.discretelip}, each scheme satisfies the condition that 
\[
|F^{\pm,0}_k(\tilde{n})-F^{\pm,0}_{k-1}(\tilde{n})|\leq p^*
\] 
for all $k \in \Z$. By Remark \ref{r.newmon}, the prior two displays yield that
whenever $\xmesh \leq \min(\eta, 1/\tilde n^{(m+1)})$, we have by induction that for all $n\in \N$, 
\begin{equation}\label{eq:schemeordering_new}
F^{-,n}_k(\tilde n) \leq F^{n}_k \leq F^{+,n}_k(\tilde n).
\end{equation}

We now combine these bounds with Proposition \ref{p.intlip}. We first aim to apply the proposition with $\mu_N$ defined by 
\[
\mu_N[-\infty,k)= F^{+,0}_k(\tilde{n})\, .
\]
The proposition requires that $\mu_N$ have the form $\mu_N[-\infty,k) = u_0(k/N^{1/(m+1)})$, so the definition of $F^{+,0}_k(\tilde{n})$ forces us to take $\xmesh=N^{-1/(m+1)}$ and $u_0(x) = u^{\eps,1}(x-L/\tilde{n}^{1/(m+1)}+S,\eps)$. Since $u^{\eps,1}$ is Lipschitz, fixing $T > 1$ and applying Proposition \ref{p.intlip} (specifically \eqref{e.invp}) at time $t = 1 \in [0,T]$, it follows that there exist $N_0=N_0(q,m,K)$ and $c=c(K, m, T)$  such that if $N \ge N_0$, for all $k \in \Z$, 
\[
F^{+,N}_k(\tilde{n}) \le  u^{\eps,1}(kN^{-1/(m+1)} - L\tilde n^{-1/(m+1)}+S, 1 + \eps) +  cN^{-1/2}. 
\]
We emphasize that $N_0$ and $c$ depend only on the initial condition $u_0(x) = u^{\eps,1}(x-L/\tilde{n}^{1/(m+1)}+S,\eps)$ through its Lipschitz constant $K$; in particular, $N_0$ and $c$ do not depend on $\tilde{n}$ since varying $\tilde{n}$ translates the  initial condition horizontally but does not change its Lipschitz constant. 

Similarly, taking $\xmesh=N^{-1/(m+1)}$ and $u_0(x) = u^{0,1-\eps}(x-R/\tilde{n}^{1/(m+1)},\eps)$ and $t=1$, applying Proposition \ref{p.intlip} (specifically \eqref{e.invp}) with $\mu_N$ defined by $\mu_N[-\infty,k)= F^{-,0}_k(\tilde{n})$ yields that for all $N \ge N_0$ and all $k \in \Z$, 
\[
F^{-,N}_k(\tilde{n}) \ge  u^{0,1-\eps}(kN^{-1/(m+1)} - R\tilde n^{-1/(m+1)}, 1 + \eps) -  cN^{-1/2}. 
\]
For $N\ge N_0$ large enough that also $\xmesh = N^{-1/(m+1)} \le \min(\eta, \tilde n^{-1/(m+1)})$, we may combine these bounds with \eqref{eq:schemeordering_new} to deduce that for all $k \in \Z$, 
\begin{align*}
\P(X_N < k) = F^N_k \leq F^{+,N}_k(\tilde n) & \leq u^{\varepsilon,1}(kN^{-1/(m+1)} - L\tilde n^{-1/(m+1)}+S, 1 + \varepsilon) +  cN^{-1/2} 
\end{align*}
and 
\begin{align*}
\P(X_N < k) = F^N_k \geq F^{-,N}_k(\tilde n) & \geq u^{0,1-\varepsilon}(kN^{-1/(m+1)} - R\tilde{n}^{-1/(m+1)},1+\varepsilon) -  cN^{-1/2}. 
\end{align*}

Taking $k = xN^{1/(m+1)}$, these bounds become 
\[
\P(X_N < xN^{1/(m+1)}) \leq u^{\varepsilon,1}(x - L\tilde n^{-1/(m+1)} + S,1+\varepsilon)+ cN^{-1/2},
\]
and
\[
\P(X_N < xN^{1/(m+1)}) \geq u^{0,1-\varepsilon}(x - R \tilde n^{-1/(m+1)},1+\varepsilon)- cN^{-1/2}.
\]
We should in fact take $k = \lfloor xN^{1/(m+1)}\rfloor$ above, but we ignore this minor rounding issue to preserve readability, as the errors it creates are asymptotically negligible for $N$ large due to the spatial continuity of $u^{\eps,1}$ and of $u^{0,1-\eps}$ at time $1+\eps$.

For $\tilde{n} \ge \max(N_0,\eta^{-(m+1)})$, if $N \ge \tilde{n}$ then the other constraints on $N$ are automatically satisfied.
Recalling that $L \le 0$, since $cN^{-1/2} \to 0$ as $N \to \infty$, the first of the two preceding bounds then implies that 
\begin{align*}
\limsup_{N\to\infty} \P\left(\frac{X_N}{N^{1/(m+1)}} < x\right) 
& 
\le \inf_{\tilde{n} \ge \max(N_0,\eta^{-(m+1)})}
\Big(
u^{\varepsilon,1}(x - L\tilde n^{-1/(m+1)} + S,1+\varepsilon) \Big)\\
& =u^{\varepsilon,1}(x+S,1+\varepsilon). 
\end{align*}
Likewise, the second of the bounds yields that 
\begin{align*}
\liminf_{N\to\infty} \P\left(\frac{X_N}{N^{1/(m+1)}} < x\right) 
& \ge 
\sup_{\tilde{n} \ge \max(N_0,\eta^{-(m+1)})}
\Big(
u^{0,1-\varepsilon}(x - R \tilde n^{-1/(m+1)},1+\varepsilon) 
\Big)
\\
& 
= u^{0,1-\varepsilon}(x,1+\varepsilon)\, .
\end{align*}

Finally, from the explicit representations of $u^{\varepsilon,1}, u^{0,1-\varepsilon}$ from \eqref{eq:scaledICsoln1} and \eqref{eq:scaledICsoln2}, and $S=S(\ve)$ defined by \eqref{eq:Sdefn}, we have
\begin{align*}
\lim_{\varepsilon\to 0} S(\varepsilon)= 0\\
\lim_{\varepsilon\to 0} u^{\varepsilon,1}(x+S(\varepsilon),1+\varepsilon) = u(x,1)\\
\lim_{\varepsilon\to 0} u^{0,1-\varepsilon}(x,1+\varepsilon) = u(x,1),
\end{align*}
uniformly in $x$, where $u(x,t)$ is given by \eqref{eq:explicitsolution}. Taking the limit as $\varepsilon \to 0$, we get that 
\[
u(x,1) \leq \liminf_{n\to\infty} \P\left(\frac{X_n}{n^{1/(m+1)}} < x\right) \leq \limsup_{n\to\infty} \P\left(\frac{X_n}{n^{1/(m+1)}} < x\right) \leq u(x,1),
\]
and therefore 
\[
\lim_{n\to\infty} \P\left(\frac{X_n}{n^{1/(m+1)}} < x\right) = u(x,1)\, ,
\]
as desired. 
\end{proof}

\section{General singular initial conditions}\label{sect:GenIC}

We saw that Lemma \ref{lem:StochasticOrdering} requires a bound on the maximum single-site probability. 
Our next result shows that in fact, there exists a constant $N$ such that, regardless of the initial distribution, the distribution of the ACM after $N$ steps will satisfy such a bound. 
\begin{lemma}\label{lem:discreteICevolution}
Let $\mu$ be a probability distribution supported on $\Z$, and 
let $(X_n, n\geq 0)$ be \acm-distributed.   Then there exists a constant $N=N(m,q,r)$, such that for all $n \geq N$,
\begin{equation} \label{eq:bddnesscondition}  
\max_{k\in\mathbb{Z}} \P(X_n = k) \leq p^*.
\end{equation} 
\end{lemma}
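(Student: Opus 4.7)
The plan is to track $M_n := \max_{k\in\Z} \P(X_n = k)$ and show that $M_n \le p^*$ within a bounded number of steps. The key ingredient is the function $f(x) = x - qx^{m+1}$ appearing in the pmf recurrence \eqref{e.pmfscheme}. Since $f'(x) = 1 - q(m+1)x^m$ vanishes precisely at $x = p^*$, the function $f$ attains its maximum on $[0,1]$ at $x = p^*$, with value $f(p^*) = p^* - q(p^*)^{m+1} = p^* m/(m+1)$ (using $q(p^*)^m = 1/(m+1)$). Writing the recurrence as $p^{n+1}_k = f(p^n_k) + q(p^n_{k-1})^{m+1}$, this immediately gives the \emph{propagation bound}: whenever $p^n_{k-1} \le p^*$,
\[
p^{n+1}_k \le f(p^*) + q(p^*)^{m+1} = \frac{p^* m}{m+1} + \frac{p^*}{m+1} = p^*.
\]

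Since $p^* > 1/2$ and $\sum_k p^n_k = 1$, at any time $n$ at most one site $j_n$ can have $p^n_{j_n} > p^*$. Combined with the propagation bound, any site $k$ with $p^{n+1}_k > p^*$ must satisfy $k - 1 = j_n$, so the ``excess'' site can only drift one step to the right per time unit. In particular, if $M_n > p^*$ then $M_{n+1} \le \max(p^*, p^{n+1}_{j_n + 1})$, and the recurrence gives $p^{n+1}_{j_n+1} = f(p^n_{j_n+1}) + q M_n^{m+1}$. Mass conservation forces $p^n_{j_n+1} \le 1 - M_n < 1/2 \le p^*$, and since $f$ is increasing on $[0, p^*]$ I can bound $f(p^n_{j_n+1}) \le f(1 - M_n)$. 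Setting $F(a) := f(1-a) + qa^{m+1}$, this gives
\[
M_{n+1} \le \max\bigl(p^*,\ F(M_n)\bigr).
\]

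What remains is to show that $H(a) := a - F(a) = (2a-1) - q\bigl[a^{m+1} - (1-a)^{m+1}\bigr]$ is bounded below by some constant $c = c(q,m) > 0$ on $[p^*, 1]$; this would give $M_{n+1} \le M_n - c$ whenever $M_n > p^*$, forcing $M_n \le p^*$ after $N_1 := \lceil(1-p^*)/c\rceil$ steps, after which Lemma \ref{lem:discrete1stepdynamics} preserves the bound. I expect this positivity check to be the main obstacle. For $m = 1$, $H(a) = (2a-1)(1-q)$ is trivially positive on $(1/2, 1]$. For $m \ge 2$, I would argue by concavity: $H''(a) = -q(m+1)m\bigl[a^{m-1} - (1-a)^{m-1}\bigr] < 0$ on $(1/2, 1]$, while $H(1/2) = 0$, $H(1) = 1 - q > 0$, and $H'(1/2) = 2 - q(m+1)/2^{m-1} > 0$ (this last inequality being equivalent to $p^* > 1/2$). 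Concavity then forces $H > 0$ on $(1/2, 1]$, and continuity on the compact set $[p^*, 1]$ yields the required positive minimum $c$, which depends only on $q$ and $m$.
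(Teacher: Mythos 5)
Your proof is correct and organizes the argument somewhat more cleanly than the paper does, though the core ingredients are identical. The paper's proof of Lemma~\ref{lem:discreteICevolution} shows that \emph{every} site decrements by a fixed constant $C$, via a three-way case analysis on $k$: for $k \notin \{\ell,\ell+1\}$ it uses mass conservation to get $p^0_k \le 1/2 < M - (p^*-1/2)$, for $k=\ell$ it manipulates the recurrence directly, and for $k=\ell+1$ it invokes Lemma~\ref{lem:g(x,y)facts}. Your ``propagation bound'' --- that $p^n_{k-1} \le p^*$ forces $p^{n+1}_k \le f(p^*) + q(p^*)^{m+1} = p^*$, since $f$ peaks at $p^*$ --- collapses the first two cases in a single stroke, showing that all sites other than $j_n+1$ immediately fall to at most $p^*$ and reducing everything to the scalar recursion $M_{n+1} \le \max(p^*, F(M_n))$. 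That is a genuinely nicer decomposition. The remaining positivity claim for $H(a) = a - F(a)$ on $[p^*,1]$ is, however, nothing new: $H(a) = g(a,1-a)$ with $g$ exactly as in Lemma~\ref{lem:g(x,y)facts}, and your concavity computation (sign of $H''$ on $(1/2,1]$, values at the endpoints) reproduces the proof of that lemma; you could simply have cited it. One small imprecision in the write-up: combining $M_{n+1} \le \max(p^*, F(M_n))$ with $F(a) \le a - c$ on $[p^*,1]$ gives $M_{n+1} \le \max(p^*, M_n - c)$, not $M_{n+1} \le M_n - c$ as stated; the conclusion that $M_n \le p^*$ within $\lceil (1-p^*)/c \rceil$ steps and then stays there by Lemma~\ref{lem:discrete1stepdynamics} is unaffected, but the intermediate sentence should be corrected.
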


In the proof of Lemma \ref{lem:discreteICevolution}, we want to use the fact that $p^* > 1/2$ to bound the decrease in the maximum away from 0. However, we must also consider the case when $p^* = 1/2$, which occurs when $m = q = 1$. As such, the $\mathrm{ACM}(1,1,r,\mu)$-distributed processes will be addressed in a slightly different way.

\begin{proof}[Proof of Lemma \ref{lem:discreteICevolution}]
We know by Lemma \ref{lem:discrete1stepdynamics} that if $M:= \max_{k\in\Z} p_k^0 \leq p^*$, then $\max_{k \in \Z} p^n_k \leq p^*$ for all $n$.  So suppose there exists an $\ell \in \Z$ such that $M = P(X_0 = \ell) > p^*$. We will show that 
\begin{equation}
\max_{k \in Z} \P(X_1 = k) \leq \max(p^*,M-C)
\end{equation}
for some $C = C(m,q,r) > 0$. The full result will then follow by induction, as this implies that for all $n \in \N$, 
\[
\max_{k \in \Z} p^n_k \le \max(p^*,M-nC), 
\]
and in particular $\max_{k \in \Z} p^n_k\leq p^{*}$ for all $n \ge (1-p^*)/C$.

We know that because $p^*\geq 1/2$, there exists a unique $\ell$ such that $p^0_\ell > p^*$. 

First suppose that $p^* > 1/2$. We consider each site $k \in \Z$ and show that either $p^1_k \leq p^*$ or $M - p^1_k = p^0_\ell - p^1_k \geq C$. 
 
\begin{itemize}
\item When $k \not \in \{\ell-1,\ell,\ell+1\}$, then by Lemma \ref{lem:discrete1stepdynamics} we have $$p^1_k \leq \max(p^0_{k-1},p^0_k,p^0_{k+1}) \leq p^*.$$ 

\item For the value $p^1_\ell$, we have 
 
\begin{align*}
p^0_\ell - p^{1}_\ell &= -q\left[r(p^0_{\ell-1})^{m+1}-(p^0_\ell)^{m+1}+(1-r)(p^0_{\ell+1})^{m+1}\right]\\
&\geq -q\left[r(1/2)^{m+1}-(p^*)^{m+1}+(1-r)(1/2)^{m+1}\right]\\
&= q\left[(p^*)^{m+1}-(1/2)^{m+1}\right]\\
&:=C_1> 0.
\end{align*}

\item For site $\ell+1$, we have a similar computation:
 
\begin{align*}
p^0_{\ell} - p^{1}_{\ell+1} &= p^0_{\ell}-\left[p^0_{\ell+1}+ qr(p^0_{\ell})^{m+1}-q(p^0_{\ell+1})^{m+1}+(1-r)q(p^0_{\ell+2})^{m+1}\right]\\
&= p^0_{\ell}-\left[p^0_{\ell+1}+ qr(p^0_{\ell})^{m+1}-q(p^0_{\ell+1})^{m+1} +(1-r)q(p^0_{\ell+2})^{m+1}\right]\\
&-(1-r)q(p^0_{\ell})^{m+1} + (1-r)q(p^0_{\ell})^{m+1}\\
&= \left[p^0_{\ell}-q(p^0_{\ell})^{m+1}\right] - \left[p^0_{\ell+1}-q(p^0_{\ell+1})^{m+1}\right] + (1-r)q\left[(p^0_{\ell})^{m+1} - (p^0_{\ell+2})^{m+1}\right]\\
&= \gfunc(p^0_\ell,p^0_{\ell+1})+ (1-r)q\left[(p^0_{\ell})^{m+1} - (p^0_{\ell+2})^{m+1}\right]\\
&\geq \min(\gfunc(p^*,1-p^*),1-q) + (1-r)q\left[(p^*)^{m+1} - (1/2)^{m+1}\right]\\
&:= C_2,
\end{align*}
where the inequality is obtained by Lemma \ref{lem:h(x,y)facts}. Now notice that each term in $C_2$ is non-negative. If $q < 1$, the first term is strictly positive and if $q = 1$, then $r \neq 1$, so the second term is strictly positive. Therefore $C_2 > 0$. 

\item We also bound $p^0_\ell - p^1_{\ell-1}$ with a similar computation. 

\begin{align*}
p^0_{\ell} - p^{1}_{\ell-1} &= p^0_{\ell}-\left[p^0_{\ell-1}+ qr(p^0_{\ell-2})^{m+1}-q(p^0_{\ell-1})^{m+1}+(1-r)q(p^0_{\ell})^{m+1}\right]\\
&= p^0_{\ell}-\left[p^0_{\ell-1}+ qr(p^0_{\ell-2})^{m+1}-q(p^0_{\ell-1})^{m+1} +(1-r)q(p^0_{\ell})^{m+1}\right]\\
&-qr(p^0_{\ell})^{m+1} + qr(p^0_{\ell})^{m+1}\\
&= \left[p^0_{\ell}-q(p^0_{\ell})^{m+1}\right] - \left[p^0_{\ell-1}-q(p^0_{\ell-1})^{m+1}\right]+ qr\left[(p^0_{\ell})^{m+1} - (p^0_{\ell-2})^{m+1}\right]\\
&\geq \min(\gfunc(p^*,1-p^*),1-q) + qr\left[(p^*)^{m+1} - (1/2)^{m+1}\right]\\
&:= C_3.
\end{align*}

Again, because $\min(\gfunc(p^*,1-p^*),1-q) \geq 0$ and the second term in $C_3$ is strictly positive, we can see that $C_3 > 0$.
\end{itemize}
Therefore, if we let $C := \min(C_1, C_2, C_3)>0$, we have shown that if $p^* > 1/2$

$$\max_{k \in Z} \P(X_1 = k) \leq \max(p^*,M-C)$$

as desired.

We now address the case when $p^* = 1/2$; so $m = q = 1$. Again, Lemma \ref{lem:discrete1stepdynamics} tells us that for $k \not \in \{\ell-1,\ell,\ell+1\}$, $p^1_k \leq p^*$. For $k \in \{\ell-1,\ell,\ell+1\}$ we again bound $p^0_\ell - p^1_k$ from below, but the bounds used above are too loose. So we use the method of Lagrange multipliers to get a better bound. The details are routine but tedious so we only sketch them. In three separate calculations, we minimize the function of interest subject to the constraints that the sum of the three sites must be less than 1, that $1/2 \leq p^0_\ell \leq 1$, and that all sites have a non-negative mass. The functions we choose to minimize will be
\begin{align*}
p^0_\ell - p^1_{\ell} &= - r(p^0_{\ell-1})^2 +(p^0_{\ell})^2 -(1-r)(p^0_{\ell+1})^2,\\
p^0_\ell - p^1_{\ell-1} &= p^0_\ell - p^0_{\ell-1} - r(p^0_{\ell-2})^2 +(p^0_{\ell-1})^2 -(1-r)(p^0_\ell)^2,\\
p^0_\ell - p^1_{\ell+1} &= p^0_\ell - p^0_{\ell+1} - r(p^0_{\ell})^2 +(p^0_{\ell+1})^2 -(1-r)(p^0_{\ell+2})^2.
\end{align*} 
Leaving the details to the reader, we see that each of these functions is minimized at a corner point of the corresponding constraint region. This yields the following lower bound : 

$$\min( p^0_\ell - p^1_{\ell}, p^0_\ell - p^1_{\ell-1}, p^0_\ell - p^1_{\ell+1}) \geq \min\left(\frac{1-r}{4}, \frac{r}{4}, \frac{1-r}{4}\right)=\frac{1-r}{4}.$$

Because $q = 1$ in this case, we know $r \neq 1$ and therefore in this case $C := 1/4- r/4 > 0$. With this constant, the result again follows by induction. 
\end{proof}

We now have everything needed to prove the main theorem. 

\begin{proof}[Proof of Theorem \ref{thm:main2}]
Fix $N=N(q,m,r)$ as in Lemma \ref{lem:discreteICevolution},
let $\tilde{\mu}$ be the distribution of $X_{N}$ and let $\tilde{X}_n = X_{N+n}$ for $n \ge 0$. Then $(\tilde{X}_n,n \ge 0)$ is ACM$(m,q,r,\tilde{\mu})$-distributed.
Because $\P(\tilde{X}_0=k) \leq p^*$ for all $k$, we can apply Proposition \ref{prop:goodSingIC_conv} to conclude that 
$$\lim_{n\to\infty} \P\left(\frac{\tilde{X}_n}{n^{1/(m+1)}} < x \right) = u(x,1).$$
Since $N$ is fixed and $u(x,1)$ is continuous, this implies that for all $x \in \R$, 
\begin{equation}\label{eq:final_unscaled_conv}
\lim_{n\to\infty} \P\left(\frac{X_n}{n^{1/(m+1)}} < x \right) 
= 
\lim_{n \to \infty}
\P\left(\frac{\tilde{X}_n}{n^{1/(m+1)}} < x \Big(\frac{n-N}{n}\Big)^{1/(m+1)}\right) 
= u(x,1).
\end{equation}

By comparing the expression for $u(x,1)$ provided by \eqref{eq:explicitsolution} to the CDF given in \eqref{e.cdfbeta} for $(m+1)\left(\frac{\cons}{m^m}\right)^{1/m+1}B$,  where $B$ is $\Bet(\tfrac{m+1}{m},1)$-distributed, we see that 
\[
 \frac{1}{m+1}\left(\frac{m^m}{\cons}\right)^{\frac{1}{m+1}} \cdot
\frac{X_n}{n^{1/(m+1)}}\xrightarrow{d} B,
\]
as required. 
\end{proof}

\begin{remark}\label{r.smallr}
When $0 \leq r < 1/2$ we have $\cons < 0$, and the PDE arguments above don't apply directly, as $H(p) = \cons |p|^{m+1}$ is no longer convex. However, we can prove the result for $r$ in this range as follows. 

Let $(X_n, n \geq 0)$ be \acm-distributed with $r < 1/2$. Define $(Y_n, n \geq 0)$ with $Y_n = -X_n$ for all $n$. Then $(Y_n, n\geq 0)$ is $\text{ACM}(m,q,1-r,-\mu)$ distributed, where we take the convention that $-\mu(x):=\mu(-x)$. Because $1 - r > 1/2$, the arguments above show that 
$$\frac{1}{m+1}\left(\frac{m^m}{(2(1-r)-1)q}\right)^{\frac{1}{m+1}}\cdot \frac{Y_n}{n^{1/(m+1)}} \xrightarrow{d} B$$
where $B$ is $\Bet\left(\frac{m+1}{m},1\right)$-distributed. 

Noting that $Y_n = -X_n$, we simplify and plug in to see that this is equivalent to the statement

$$\frac{1}{m+1}\left(\frac{m^m}{-(2r-1)q}\right)^{\frac{1}{m+1}}\cdot \frac{-X_n}{n^{1/(m+1)}} \xrightarrow{d} B.$$

Now, recalling that $\cons = (2r-1)q < 0$ when $r < 1/2$, we see that this can be rewritten as

$$\frac{\text{sign}(\cons)}{m+1}\left(\frac{m^m}{|\cons|}\right)^{\frac{1}{m+1}}\cdot \frac{X_n}{n^{1/(m+1)}} \xrightarrow{d} B.$$

This is precisely the result in Theorem \ref{thm:main2}.

\end{remark}

\section{Generalizations, Limitations, and Open Questions}\label{s.generalization}

In this section, we discuss several possible extensions to the above results, as well as some obstacles and challenges we have observed.

\noindent \emph{1. Higher Dimensions.} One may try to extend our results and techniques to higher space-dimensions. However, there are several challenges. In particular, we use the monotonicity of CDFs, namely that $F^{n}_{k}$ is nondecreasing in $k$, ubiquitously throughout the paper. The monotonicity of CDFs in higher dimensions is weaker, as it requires ordering in all coordinates.
Moreover, the natural recurrence for the CDF in higher dimensions requires information about the values of the CDF along the boundary of a quadrant (since such a CDF encodes the probability that a random variable lies in a quadrant). Thus, such a recurrence is highly non-local and, as such, does not seem amenable to PDE tools we use in this paper. 
Relatedly, it is unclear how to extend the stochastic monotonicity and sandwiching arguments from this paper to higher dimensions. 
 These points make the extension of our results to dimensions $d > 1$ delicate (although we hope not impossible).

 \noindent \emph{2. Connection to zero-range processes.} The following interacting particle system is, heuristically, a relative of cooperative motion. Begin with $L$ particles at the origin $0 \in \Z$. For each time $n \in \N$, choose $m+1$ particles independently and uniformly at random. If all $m+1$ particles are at the same location, then one of the $m+1$ particles makes a biased lazy random walk step ($1$ with probability $rq$, $-1$ with probability $(1-r)q$, otherwise $0$). If the $m+1$ particles are spread among at least two sites then no motion occurs. Up to a time-change, this process is equivalent to a zero range process on $\Z$, where the rate of motion off a site with $k$ particles is $r(k)=k^{m+1}$.
 
Write $P^n_k$ for the proportion of the $L$ particles on site $k$ at time $n$. Then a simple one-step calculation using the tower law gives that 
 \[
\E(P^{n+1}_k-P^n_k) = 
\frac{1}{L}\E\left(-rq\left((P^n_k)^{m+1}-(P^n_{k-1})^{m+1}\right) + (1-r)q\left((P^n_{k+1})^{m+1}-(P^n_{k})^{m+1}\right)\right). 
\]
If the random variables $P^n_k$ are well-concentrated around their expectations, then this suggests that, after a time-change, the empirical particle density $P^n_k$ should evolve like the solution of \eqref{eq:pmf_formulation}. Results of this form --- hydrodynamic limits for the empirical particle density of zero-range processes --- are known in some cases (\cite{MR1707314} presents results of this kind, and \cite{MR1081251} analyzes a related, $\R^d$-valued model), but we were unable to find any existing results which apply to dynamics which are as singular as those described above.

\noindent \emph{3. Cooperative Motion with fewer than $m$ friends.} A related model which we have not considered, but which may be amenable to the techniques of this paper, is when the cooperative motion only requires $\ell$ individuals to move, for $\ell<m$. More precisely, we may modify the model as follows. Let $X_0$ and $(D_n,n \ge 0)$ be as in the introduction. Then, for $n \ge 0$, let  $(\tilde{X}^i_n,1 \le i \le m)$ be independent copies of $X_n$, and set 
\begin{equation*}
X_{n+1} = \begin{cases} X_n + D_n &\text{if } X_n = \tilde{X}^i_n \text{ for at least $\ell$ distinct values } i\in\{ 1 \dots m\},\\
X_n & \text{otherwise} \, .
\end{cases}
\end{equation*}
It seems likely that for such a process, $X_n$ should typically take values of order $n^{1/(\ell+1)}$. A heuristic argument for this is as follows. Suppose that $X_n/n^{\alpha}$ behaves roughly like a continuous random variable with compact support, for $n$ large; say that $\P(X_n=k)\asymp n^{-1/\alpha}$ for $\Theta(n^{\alpha})$ distinct values of $k$, and for other values of $k$ this probability is substantially smaller. 

On one hand, this suggests that $\P(X_{n+1} > X_n) = \Theta(n^{\alpha-1})$, since we expect that $X_{2n}-X_n = \Theta(n^{\alpha})$. On the other hand, $\P(X_{n+1} > X_n)$ is the probability that at least $\ell$ of the $m$ copies of $\tilde{X}_n^i$ take the same value as $X_n$; if the distribution of $X_n$ is spread out over roughly $n^{\alpha}$ sites, then this probability should be around $(n^{-\alpha})^{\ell}$. For these two predictions to agree we must have $1-\alpha = \alpha\ell$, so $\alpha=1/(\ell+1)$. 

While we have confidence in this prediction of the asymptotic size of $X_n$, it is not clear to us whether or not the scaling limit should in fact be the same as for a $\mathrm{ACM}(\ell,q,r,\mu)$ process.

\noindent \emph{4. Cooperation Motion with a non-integer number of friends}.
Another possible extension is to the case when $m$ is non-integer. Although $m$ integer has a natural interpretation in terms of cooperative motion, which in turn leads to the recursion relation \eqref{e.fnk1st}, we may alternatively take \eqref{e.fnk1st} as a definition for the CDF $F^{n}_{k}$ of a random variable $X_{n}$. In this case, the same analysis shows that for any $m \in \R$ with $m \ge 1$, Theorem \ref{thm:main2} still holds when $X_{0}$ is  $\mu$-distributed and $\P(X_{n}<k)=F^{n}_{k}$, where $F^n_k$ is defined according to \eqref{e.fnk1st}. 

The requirement that $m\geq 1$ is crucial for Lemma \ref{lem:h(x,y)facts}, so the techniques of this paper do not yield insight into what happens for $m \in (0,1)$.  However, it would be quite interesting to understand this, as well as the limiting behaviour as $m\rightarrow 0$.

\noindent \emph{5. General step size distributions.} 
Remaining in one spatial dimension, another natural generalization of this process would be to consider cooperative motion which allowed for more general step sizes $D_n$. As we will discuss in Section \ref{ss.big1}, it turns out that if $\P(|D_{n}|>1)>0$, we confront an immediate, provable obstacle to directly applying the proof techniques of this paper (failure of monotonicity). However, before describing this obstacle, we first present a generalization of our main result. If the steps $(D_n,n \ge 0)$ take on the values $+R,0,-R$ for some integer $R$, instead of $+1,0,-1$, then we are able to prove a distributional convergence result; this is presented in the next subsection. Surprisingly, the limiting distribution in this case, although always a mixture of Beta random variables, need not be Beta-distributed, due to lattice effects which persist at large times. 
 
 \subsection{Persistent lattice effects}\label{sec:lattice}

Let $(D_n,n \ge 0)$ be iid bounded integer random variables. Define a cooperative motion process, with $X_{0}$ chosen according to an initial probability distribution $\mu$ on $\Z$, as follows. For $n \ge 0$, let  $(\tilde{X}^i_n,1 \le i \le m)$ be independent copies of $X_n$, and set 
\begin{equation}
X_{n+1} = \begin{cases} X_n + D_n &\text{if } X_n = \tilde{X}^i_n \text{ for all } i= 1 \dots m,\\
X_n &\text{if } X_n \neq \tilde{X}^i_n \text{ for some } i \, .
\end{cases}
\label{eq:GeneralRecursionDefn}
\end{equation}

In this section we consider the case where the steps $(D_n,n \ge 0)$ are iid with $\P(D_n=\constwo)=rq$, $\P(D_n=0) = 1-q$, and $\P(D_n = -\constwo)=(1-r)q$ for some $q \in (0,1]$, $r \in (1/2,1]$ with $r$ and $q$ not both 1, and $\constwo \in \N$. If the initial distribution $\mu$ is supported by a translate of $\constwo\Z$ then the resulting cooperative motion may simply be seen as a rescaling of the Bernoulli cooperative motion process considered in the body of the paper. However, if $\mu$ is not supported by a translate of $\constwo\Z$ then the asymptotic behaviour is in fact different; there are lattice effects which persist at large times. 
\begin{thm}\label{thm:main_lattice}
Consider the generalized cooperative motion with 
$\P(D_n=\constwo)=rq$, $\P(D_n=0) = 1-q$, and $\P(D_n = -\constwo)=(1-r)q$ for some $q \in (0,1]$, $r \in (1/2,1]$ with $r$ and $q$ not both 1, and $\constwo \in \N$. Write $\pi_r = \P(X_0=r\mod \constwo)$ for $r \in \{1,2,\ldots,\constwo\}$. 
Then 
\[
\frac{1}{\constwo} \frac{1}{m+1} \left(\frac{m^m}{\cons n}\right)^{1/(m+1)} X_n 
\xrightarrow{d}
B \cdot \sum_{r=1}^\constwo (\pi_r)^{m/(m+1)}\indc_{\left\{A=r\right\}},
\]
where $A$ is a random variable taking values in $\{1,2,\ldots,\constwo\}$ with $\P(A=r)=\pi_r$, and $B$ is $\Bet(\tfrac{m+1}{m},1)$-distributed and independent of $A$. 
\end{thm}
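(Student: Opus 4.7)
The plan is to exploit the lattice structure: since every step $D_n$ lies in $\{0,g\}$, the residue class $X_n \bmod g$ is preserved for all $n$, so conditionally on the event $E_r := \{X_0 \equiv r \pmod{g}\}$ (for any $r \in \{1,\ldots,g\}$ with $\pi_r > 0$), the process lives on $r+g\Z$ for all time. I would analyze the conditional process on each coset by reducing it to a standard $\mathrm{CM}$ process on $\Z$ via the bijection $Z_n^{(r)} = (X_n - r)/g$, apply Theorem~\ref{thm:main2} to each piece, and then reassemble by the law of total probability.

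The key structural observation is that under $\P(\cdot \mid E_r)$ the process $(Z_n^{(r)})_{n \ge 0}$ is distributed as a $\mathrm{CM}(m, q\pi_r^m, \tilde\mu_r)$ process, where $\tilde\mu_r$ is the conditional law of $(X_0 - r)/g$ given $E_r$, a probability distribution on $\Z$. Indeed, since the copies $\tilde X^i_n$ appearing in the recursion \eqref{eq:GeneralRecursionDefn} have the \emph{unconditional} distribution of $X_n$, and since
\[
\P(X_n = kg+r) = \pi_r \cdot \P(Z_n^{(r)} = k \mid E_r)
\]
for every $k \in \Z$, the conditional probability of a jump $Z_n^{(r)} \to Z_n^{(r)} + 1$ is
\[
q\bigl(\pi_r \P(Z_n^{(r)} = k \mid E_r)\bigr)^m = (q\pi_r^m)\cdot\P(Z_n^{(r)} = k \mid E_r)^m.
\]
Equivalently, the CDFs of $Z_n^{(r)}$ under $\P(\cdot \mid E_r)$ satisfy the recursion \eqref{e.fnk1st} with $q$ replaced by $q\pi_r^m \in (0,1)$, and that recursion together with the initial data uniquely determines the law at all later times.

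Applying Theorem~\ref{thm:main2} with parameter $q\pi_r^m$ in place of $q$ then gives the conditional scaling limit
\[
\frac{Z_n^{(r)}}{n^{1/(m+1)}} \xrightarrow{d} (m+1)\left(\frac{q}{m^m}\right)^{1/(m+1)} \pi_r^{m/(m+1)}\,B_r,
\]
where $B_r \sim \Bet(\tfrac{m+1}{m},1)$. Since $X_n = r + gZ_n^{(r)}$ on $E_r$ and the additive constant $r$ is negligible after rescaling, this yields
\[
\frac{1}{g(m+1)}\left(\frac{m^m}{qn}\right)^{1/(m+1)} X_n \xrightarrow{d} \pi_r^{m/(m+1)}\,B_r \quad\text{under }\P(\cdot\mid E_r).
\]
Mixing these conditional limits with weights $(\pi_r)_{r=1}^g$ produces the unconditional limit $\pi_A^{m/(m+1)}\cdot B = B\sum_{r=1}^g \pi_r^{m/(m+1)}\indc_{\{A=r\}}$, with $A \sim (\pi_r)_r$ independent of $B\sim\Bet(\tfrac{m+1}{m},1)$, which is precisely the claim.

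The delicate step is the first one: verifying that the conditional process $Z_n^{(r)}$ is genuinely a cooperative motion with modified parameter $q\pi_r^m$. One has to keep careful track of the fact that the copies $\tilde X_n^i$ retain their \emph{unconditional} distribution even after conditioning on $E_r$, which is exactly what produces the factor $\pi_r^m$ and rescales the effective Bernoulli parameter. Once this identification is made cleanly at the level of the CDF recursion, the remainder is a direct application of Theorem~\ref{thm:main2} together with a standard mixture argument for convergence in distribution.
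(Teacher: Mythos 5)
Your proposal is correct, and it reaches the result by a genuinely different route from the paper's. The paper introduces auxiliary processes $X_n^{(r)}$ that equal $X_n$ on the coset $\{X_n \equiv r \pmod g\}$ and $-\infty$ off it, observes that $(X_n^{(r)}-r)/g$ is an \emph{extended} cooperative motion on $\Z\cup\{-\infty\}$ with the \emph{original} parameter $q$ but with mass $1-\pi_r$ at $-\infty$, and then invokes Proposition~\ref{thm:main_extended} (the extension of Theorem~\ref{thm:main2} to processes taking values in $\Z\cup\{\pm\infty\}$, stated without proof as a ``straightforward extension''). You instead condition on $E_r=\{X_0\equiv r\pmod g\}$ and identify the conditioned, rescaled process $Z_n^{(r)}=(X_n-r)/g$ as a bona fide $\mathrm{CM}(m,q\pi_r^m,\tilde\mu_r)$ process on $\Z$, which lets you apply Theorem~\ref{thm:main2} exactly as stated, with no extension to extended random variables. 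Your identification is correct: since the copies $\tilde X_n^i$ retain the unconditional law, $\P(\tilde X_n = kg+r)=\pi_r\,\P(Z_n^{(r)}=k\mid E_r)$, so the CDF recursion for $Z_n^{(r)}$ under $\P(\cdot\mid E_r)$ is precisely \eqref{e.fnk1st} with $q$ replaced by $q\pi_r^m\in(0,1)$, and the CM process is characterized by that marginal recursion together with the initial law. The exponent $\pi_r^{m/(m+1)}$ then drops out of the scaling constant $(q\pi_r^m)^{1/(m+1)}$ rather than being read off from the conditional law of the extended limit $B^{a,b}$. The two approaches buy slightly different things: the paper's extended formulation makes the joint convergence of the vector $(X_n^{(1)},\dots,X_n^{(g)})$ transparent (exactly one coordinate finite), at the cost of having to extend the main theorem; yours avoids any extension and makes the effective slowdown on sparse cosets conceptually visible as a reduced Bernoulli parameter, at the cost of having to argue carefully (as you do) that conditioning on $E_r$ yields a genuine CM process, and of assembling the unconditional limit via a mixture argument rather than a joint-convergence statement. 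Both are complete; note only that your conditioning argument should formally restrict to $r$ with $\pi_r>0$, which is harmless since cosets with $\pi_r=0$ contribute nothing to the mixture.
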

As an input to the proof of Theorem~\ref{thm:main_lattice}, we use the following straightforward extension of Theorem~\ref{thm:main2} to Bernoulli cooperative motions which may take values $\pm \infty$. 
Let $c=c(q,m,r) = m\cons^{-1/m}(m+1)^{-(m+1)/m}$, and for $0 \le a < b \le 1$ define an extended CDF $F^{a,b}$ by 
\[
F^{a,b}(x) = 
\begin{cases}
a					& \mbox{if }x \le 0\\
a + cx^{(m+1)/m}	& \mbox{if }0 \le cx^{(m+1)/m} \le b-a \\
b					& \mbox{if }b-a \le cx^{(m+1)/m}\, .
\end{cases}
\]
Let $B^{a,b}$ be an extended random variable with distribution $F^{a,b}$. Then $\P(|B^{a,b}| < \infty) = b-a$, and for $x \in \R$,
\[
\P(B^{a,b} \le x~|~|B^{a,b}|<\infty)
= 
\begin{cases}
0					& \mbox{if }x \le 0\\
\frac{c}{b-a}x^{(m+1)/m}	& \mbox{if }0 \le \frac{c}{b-a}x^{(m+1)/m} \le 1 \\
1					& \mbox{if }1 \le \frac{c}{b-a}x^{(m+1)/m}\, .
\end{cases}
\]
In other words, given that $|B^{a,b}|$ is finite, it is distributed as $(\tfrac{b-a}{c})^{m/(m+1)}B$ where $B$ is $\Bet(\tfrac{m+1}{m},1)$-distributed
\begin{prop}\label{thm:main_extended}
If $\mu$ is a probability distribution on $\Z\cup\{\pm \infty\}$ with $\mu(\{-\infty\})=a$ and $\mu(\{+\infty\})=1-b$, and $(X_n,n \ge 0)$ is $\acm$-distributed, then 
\[
 \frac{X_n}{n^{1/(m+1)}} 
\xrightarrow{d} B^{a,b}. 
\]
\end{prop}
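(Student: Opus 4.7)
The key structural observation is that $-\infty$ and $+\infty$ are absorbing: the convention $\pm\infty + r = \pm\infty$ guarantees $X_{n+1} = \pm\infty$ whenever $X_n = \pm\infty$, regardless of whether the movement condition triggers. Consequently $\P(X_n = -\infty) = a$ and $\P(X_n = +\infty) = 1-b$ for every $n \ge 0$, and $\P(|X_n| < \infty) = b-a$. When $b = a$ the process is confined to $\{\pm\infty\}$ and the limit $B^{a,b}$ is supported on $\{\pm\infty\}$ with matching atoms, so the conclusion is immediate. I therefore assume $b > a$ and let $\nu$ denote the conditional law of $X_0$ given $X_0 \in \Z$, a probability measure on $\Z$. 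The plan is to reduce to Theorem~\ref{thm:main2} by relating the finite part of $(X_n)$ to an auxiliary cooperative motion in which the mass at $\pm \infty$ has been absorbed into the step probability.

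To this end, let $(Y_n, n \ge 0)$ be $\mathrm{CM}(m, q(b-a)^m, \nu)$-distributed, and write $p^n_k = \P(X_n = k)$ and $r^n_k = \P(Y_n = k)$ for $k \in \Z$. I claim that $p^n_k = (b-a)\, r^n_k$ for all $n \ge 0$ and $k \in \Z$, which I prove by induction on $n$. The base case is precisely the definition of $\nu$. For the inductive step, the derivation of \eqref{e.pmfscheme} uses only that $k, k-1 \in \Z$, so it applies unchanged in the extended setting and yields $p^{n+1}_k = p^n_k - q\big[(p^n_k)^{m+1} - (p^n_{k-1})^{m+1}\big]$. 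Substituting $p^n_j = (b-a)\, r^n_j$ and factoring out $b-a$ gives
\[
p^{n+1}_k = (b-a)\left[r^n_k - q(b-a)^m\bigl((r^n_k)^{m+1} - (r^n_{k-1})^{m+1}\bigr)\right] = (b-a)\, r^{n+1}_k,
\]
where the second equality is \eqref{e.pmfscheme} applied to $Y$ with step probability $q(b-a)^m$.

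Applying Theorem~\ref{thm:main2} to $(Y_n)$ yields $Y_n / n^{1/(m+1)} \xrightarrow{d} (m+1)\bigl(q(b-a)^m/m^m\bigr)^{1/(m+1)} B$ with $B \sim \Bet((m+1)/m, 1)$, and the limiting CDF is continuous on $\R$. Summing the PMF identity of the previous paragraph gives $\P(X_n < y) = a + (b-a)\P(Y_n < y)$ for every $y \in \R$, so setting $y = x n^{1/(m+1)}$ and letting $n \to \infty$ yields $\P(X_n/n^{1/(m+1)} < x) \to a + (b-a) G(x)$, where $G$ is the scaled Beta CDF. A direct calculation using $(q(b-a)^m)^{1/m} = q^{1/m}(b-a)$ shows this equals $F^{a,b}(x)$ on all of $\R$; combined with the already-established matching of the masses at $\pm\infty$, this gives $X_n/n^{1/(m+1)} \xrightarrow{d} B^{a,b}$. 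There is no substantive technical obstacle; the only non-trivial content is recognising that conditioning on finiteness rescales the effective step probability from $q$ to $q(b-a)^m$, which is exactly what makes the reduction to Theorem~\ref{thm:main2} go through.
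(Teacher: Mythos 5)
Your proof is correct, and it takes a genuinely different route from the paper. The paper does not actually write out a proof of Proposition~\ref{thm:main_extended}: it asserts that the argument for Theorem~\ref{thm:main2} goes through ``with minor notational changes,'' meaning one reruns the entire Crandall--Lions / stochastic-monotonicity machinery with the sandwiching solutions $u^{a,b}$ of \eqref{e.hjgenind} in place of $u^{0,1}$ (this is presumably why the general two-parameter solution \eqref{eq:scaledICsoln} was computed in the introduction). You instead reduce the extended statement to the already-proven Theorem~\ref{thm:main2} via the observation that the PMF recursion \eqref{e.pmfscheme} is homogeneous of a definite degree: since $\pm\infty$ are absorbing and contribute nothing to finite sites, the finite part of the PMF, normalized by $b-a$, evolves exactly as a $\mathrm{CM}(m, q(b-a)^m, \nu)$ process, and the factor $(q(b-a)^m)^{1/m} = q^{1/m}(b-a)$ makes the rescaled limit collapse to $F^{a,b}$. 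I checked the details: the recursion at finite $k$ is indeed unaffected by the atoms at $\pm\infty$, the modified step probability $q(b-a)^m$ lies in $(0,1)$ when $b>a$, the degenerate case $b=a$ is handled, and the CDF algebra matches the paper's definition of $F^{a,b}$; the identification of the law of $Y_n$ via its PMF recursion (rather than a pathwise coupling) is legitimate since the recursion and initial condition determine the marginals. Your argument buys economy and transparency --- no PDE input beyond what Theorem~\ref{thm:main2} already provides --- at the cost of relying on an algebraic coincidence (exact homogeneity of the nonlinearity) that the paper's more robust route does not need.
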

The proof of Proposition~\ref{thm:main_extended} proceeds exactly as does the proof of Theorem~\ref{thm:main2}, with minor notational changes, so we omit the details. 
\begin{cor}\label{cor:main_extended}
Suppose that $\mu$ is a probability distribution on $\Z\cup\{\pm \infty\}$ with $\mu(\{-\infty\})=a$ and $\mu(\{+\infty\})=1-b$, and that $\P(D_n=\constwo)=rq$, $\P(D_n=0)=1-q$, and $\P(D_n=-\constwo)=(1-r)q$ for some $q \in (0,1)$, $r \in (1/2,1]$ with $r$ and $q$ not both 1, and $\constwo \in \N$, $\constwo > 0$. If there is $r \in \{1,2,\ldots,\constwo\}$ such that $\P(|X_0|=r \mod \constwo~|~|X_0|<\infty)=1$, then 
\[
\frac{1}{\constwo} \cdot \frac{X_n}{n^{1/(m+1)}} 
\xrightarrow{d} B^{a,b}. 
\]
\end{cor}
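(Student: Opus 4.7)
My plan is to reduce the corollary directly to Proposition~\ref{thm:main_extended} by exploiting the fact that both the initial condition and the increments live on the same (shifted) copy of $g\Z$. By hypothesis, conditional on $|X_0|<\infty$, we have $X_0 \in r + g\Z$; since each step $D_n$ is either $0$ or $g$, the finite part of $(X_n, n\ge 0)$ is trapped in $r+g\Z$ for all time, while the $\pm\infty$ values are absorbing. The natural change of variable is therefore $Y_n := (X_n-r)/g$ for all $n \ge 0$, with the convention $(\pm\infty - r)/g = \pm\infty$. This yields an extended integer-valued process.

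The first step is to verify that $(Y_n, n \ge 0)$ is \mcmu-distributed for a new initial law $\mu'$, namely the pushforward of $\mu$ under $x \mapsto (x-r)/g$. This is a mechanical check using \eqref{eq:GeneralRecursionDefn}: setting $\tilde{Y}^i_n = (\tilde{X}^i_n - r)/g$ gives independent copies of $Y_n$, the equality $X_n = \tilde{X}^i_n$ is equivalent to $Y_n = \tilde{Y}^i_n$, and in the ``all agree'' case $X_{n+1} = X_n + D_n$ becomes $Y_{n+1} = Y_n + D_n/g$ with $D_n/g \sim \Ber(q)$. The atom masses transfer cleanly: $\mu'(\{-\infty\}) = a$ and $\mu'(\{+\infty\}) = 1-b$.

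Applying Proposition~\ref{thm:main_extended} to $(Y_n)$ yields $Y_n/n^{1/(m+1)} \xrightarrow{d} B^{a,b}$. The conclusion then follows from the identity
\[
\frac{X_n}{g\, n^{1/(m+1)}} = \frac{Y_n}{n^{1/(m+1)}} + \frac{r}{g\, n^{1/(m+1)}},
\]
since the second summand is a deterministic quantity vanishing as $n\to\infty$, and an additive shift by a null sequence preserves convergence in distribution (with the convention that atoms at $\pm\infty$ are unaffected, which is immediate from the pointwise CDF formulation of convergence used throughout the paper). I do not anticipate any real obstacle here; the argument is essentially a bookkeeping exercise in the scaling invariance of the model, and the only minor care needed is to extend the $\pm\infty$ convention consistently through the affine change of variables.
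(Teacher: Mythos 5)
Your proof is correct and takes exactly the same route as the paper, whose entire proof is the single line ``Apply Proposition~\ref{thm:main_extended} to the process $((X_n-r)/g,\,n\ge 0)$''; you have merely filled in the routine verifications that this rescaled process is again $\mathrm{CM}(m,q,\mu')$-distributed with the same masses at $\pm\infty$ and that the deterministic shift $r/(g\,n^{1/(m+1)})$ is asymptotically negligible.
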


\begin{proof}
Apply Proposition~\ref{thm:main_extended} to the process $((X_n-r)/\constwo,n \ge 0)$. 
\end{proof}
\begin{proof}[Proof of Theorem~\ref{thm:main_lattice}]
Define auxiliary processes $(X_n^{(r)},n \ge 0)$ for $1 \le r \le \constwo$ by 
\[
X_n^{(r)} = 
\begin{cases}
X_n			& \mbox{ if }X_n=r\mod \constwo\\
-\infty		& \mbox{ otherwise.}
\end{cases}
\]
Then by Corollary~\ref{cor:main_extended}, for each $1 \le r \le \constwo$, 
\[
\frac{1}{\constwo} \cdot  \frac{X_n^{(r)}}{n^{1/(m+1)}}
\xrightarrow{d} B^{1-\pi_r,1}. 
\]
Moreover, since exactly one of $X_n^{(1)},\ldots,X_n^{(\constwo)}$ is finite, and $\P(|X_n^{(r)}|<\infty) = \pi^{(r)}$ for all $n \ge 0$ and $1 \le r \le \constwo$, it follows that 
\[
\left(\frac{1}{\constwo} \cdot \frac{X_n^{(r)}}{n^{1/(m+1)}},1 \le r \le \constwo\right)
\xrightarrow{d} (B^{1-\pi_r,1},1 \le r \le \constwo),
\]
where the joint distribution of the variables on the right-hand side is fully determined by the stipulation that exactly one of them is finite and all others take the value $-\infty$. 

Finally, with the convention that $(-\infty)\cdot 0=0$, we have 
\[
X_n = \sum_{r=1}^\constwo X_n^{(r)} \indc_{\left\{|X_n^{(r)}| < \infty\right\}},
\]
together with which the preceding joint convergence implies that 
\[
\frac{1}{\constwo} \frac{X_n}{n^{1/(m+1)}}
\xrightarrow{d}
\sum_{r=1}^\constwo B^{1-\pi_r,1}\indc_{\left\{|B^{1-\pi_r,1}|<\infty\right\}}\, .
\]
Since $\P(|B^{1-\pi_r,1}|<\infty)=1-(1-\pi_r)=\pi_r$ for each $r \in \{1,2,\ldots,\constwo\}$, and the conditional distribution of $B^{1-\pi_r,1}$ given that $|B^{1-\pi_r,1}|$ is finite is the same as that of $(\pi_r/c)^{m/(m+1)}\Bet(\tfrac{m+1}{m},1)$, the result follows. 
\end{proof}

\subsection{Step Sizes $|D_{n}|>1$}\label{ss.big1}
Building on our main theorem, and in view of the persistent lattice effects explained in the preceding subsection, we make the following conjecture. Consider the generalized cooperative motion defined by \eqref{eq:GeneralRecursionDefn} and write $\nu$ for the common distribution of $(D_{n}, n\geq 0)$. If $\gcd(k> 0:\P(|D_n|=k)>0)=1$, then there exists $c =c(\nu) > 0$ such that $cn^{-1/(m+1)}X_n \xrightarrow{d} B$, 
where $B$ is $\Bet\left(\frac{m+1}{m},1\right)$-distributed. 

The preceding conjecture states that all totally asymmetric cooperative motion processes with non-negative, bounded integer step sizes whose support is not contained in a proper sublattice of $\Z$ should have similar asymptotic behaviour. However, there is a provable difficulty in establishing this conjecture beyond the setting described in this paper using the proof techniques shown above.
Specifically, we next show that monotonicity of the evolution fails to hold whenever $\P(|D_n|>1) > 0$. This implies that, in some sense, the main proof technique used in this paper can only handle cooperative motion-type processes with $|D_n| \leq 1$. 

Consider a cooperative-motion type process as in \eqref{eq:GeneralRecursionDefn}, with bounded but not necessarily positive step sizes, so $\P(-\ell \le D_n \le s)=1$ for some non-negative integers $s$ and $\ell$. Writing $F^n_k = \P(X_n < k)$, then the values $F^n_k$ satisfy the following recurrence:
\begin{align}
F^{n+1}_k &= G(F^n_{k+\ell}, \dots, F^n_k, \dots, F^n_{k-s}) \notag\\ 
&:= F^n_k - \sum_{j = k-s}^{k-1} \P(X_n = j)^{m+1}\P(D_n \geq k-j) + \sum_{j = k}^{k+\ell-1} \P(X_n = j)^{m+1}\P(D_n < k-j) \notag\\
&= F^n_k - \sum_{j =k-s}^{k-1} (F^n_{j+1}-F^n_j)^{m+1}\P(D_n \geq k-j) + \sum_{j = k}^{k+\ell-1} (F^n_{j+1}-F^n_j)^{m+1}\P(D_n < k-j).\notag
\end{align}
The function $G$ is defined by the equality of the first and third lines, above: so 
\begin{align*}
& G(f_{k+\ell},\ldots,f_{k-s}) \\
& = f_k - \sum_{j = k-s}^{k-1} (f_{j+1}-f_j)^{m+1}\P(D_n \geq k-j) + \sum_{j=k}^{k+\ell-1} 
(f_{j+1}-f_j)^{m+1}\P(D_n < k-j)\, .
\end{align*}
\begin{thm}\label{thm:no_stoch_mono}
If $\P(|D_n| > 1) > 0$, then there is no $\Lambda > 0$ such that $G$ is nondecreasing in each argument whenever 
\begin{equation*}
0 \leq f_{j+1} - f_{j} \leq \Lambda
\end{equation*}
for all $j \in [k-s,k+\ell-1]$. 
\end{thm}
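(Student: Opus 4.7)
The plan is to directly compute the partial derivatives of $G$ and exhibit explicit configurations in the constraint region where some $\partial G/\partial f_i$ is strictly negative, which contradicts the claimed nondecreasing property. The hypothesis $\P(|D_n|>1)>0$ splits into $\P(D_n\ge 2)>0$ or $\P(D_n\le -2)>0$; I describe the first, in which $s\ge 2$, and note that the second is handled by a symmetric argument.

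Differentiating the given formula for $G$ and using that the term $(f_{j+1}-f_j)^{m+1}$ contributes to $\partial/\partial f_{k-1}$ only when $j\in\{k-2,k-1\}$ (the second sum, which starts at $j=k$, does not involve $f_{k-1}$), one obtains
\[
\frac{\partial G}{\partial f_{k-1}} = -(m+1)(f_{k-1}-f_{k-2})^{m}\P(D_n\ge 2) + (m+1)(f_k-f_{k-1})^{m}\P(D_n\ge 1).
\]
Fix any $\Lambda>0$ and take the configuration $f_j=0$ for $j\le k-2$ and $f_j=\Lambda/2$ for $j\ge k-1$. All consecutive differences lie in $[0,\Lambda]$, and on this configuration the displayed expression equals $-(m+1)(\Lambda/2)^m\P(D_n\ge 2)<0$. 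To convert this into an explicit failure of monotonicity, I would compare $G$ at this configuration with $G$ at the configuration obtained by replacing $f_{k-1}$ by $\Lambda/2-\epsilon$, for small $\epsilon>0$. Both configurations satisfy the constraint, and expanding the only two affected terms $-(f_{k-1}-f_{k-2})^{m+1}\P(D_n\ge 2)$ and $-(f_k-f_{k-1})^{m+1}\P(D_n\ge 1)$ to first order in $\epsilon$ shows that the perturbed configuration yields a strictly larger value of $G$. Thus $G$ is not nondecreasing in its argument $f_{k-1}$ on the constraint region, irrespective of $\Lambda$.

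The symmetric case $\P(D_n\le -2)>0$ (so $\ell\ge 2$) follows by the same scheme applied to $\partial G/\partial f_{k+1}$, whose expression contains a negative coefficient on $(f_{k+2}-f_{k+1})^{m}\P(D_n\le -2)$. The main obstacle here is purely bookkeeping: keeping straight which indices $j$ contribute to which partial derivative and with which probability weight, and verifying that the perturbed configuration remains inside $[0,\Lambda]$. Structurally, the reason the argument collapses in the Bernoulli case $\ell=0$, $s=1$ is that $\P(D_n\ge 2)=0$ eliminates the obstructing term, leaving $\partial G/\partial f_{k-1}=(m+1)(f_k-f_{k-1})^m\P(D_n\ge 1)\ge 0$, which is precisely the monotonicity property underpinning Section~\ref{sect:LipIC}.
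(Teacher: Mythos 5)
Your proposal is correct and matches the paper's proof essentially verbatim: both compute $\partial G/\partial f_{k-1}$ (resp.\ $\partial G/\partial f_{k+1}$), observe that the term multiplying $\P(D_n\ge 2)$ (resp.\ $\P(D_n\le -2)$) enters with a negative sign, and exhibit a configuration with $f_k = f_{k-1} > f_{k-2}$ (resp.\ $f_k = f_{k+1} < f_{k+2}$) lying inside the constraint band where the derivative is strictly negative. Your choice $f_j=0$ for $j\le k-2$, $f_j=\Lambda/2$ for $j\ge k-1$, together with the finite $\epsilon$-perturbation of $f_{k-1}$, is simply a concrete instantiation of the paper's sign argument and adds nothing structurally new.
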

\begin{proof}
First, 
\begin{align*}
\frac{\partial G}{\partial f_{k-1}} &= (m+1)(f_k - f_{k-1})^m\P(D_n \geq k - (k-1))\\ 
&- (m+1)(f_{k-1}-f_{k-2})^m\P(D_n \geq k - (k-2))\\
&= (m+1)(f_k - f_{k-1})^m\P(D_n \geq 1)- (m+1)(f_{k-1}-f_{k-2})^m\P(D_n \geq 2),
\end{align*}
so if $\P(D_n \geq 2) > 0$ and if $f_k = f_{k-1} > f_{k-2}$, then 
$$\frac{\partial G}{\partial f_{k-1}} < 0.$$
Similarly,
\[
\frac{\partial G}{\partial f_{k+1}}
= 
(m+1)(f_{k+1} - f_{k})^m\P(D_n < 0)-(m+1)(f_{k+2}-f_{k+1})^m\P(D_n < -1), 
\]
so if $\P(D_n < -1) > 0$, then whenever $f_k=f_{k+1} < f_{k+2}$ then we have 
\[
\frac{\partial G}{\partial f_{k+1}} < 0.\qedhere
\] 
\end{proof}
Note that for any initial distribution with bounded support, if the step size is bounded then for all $n$ the support of $X_n$ is bounded: letting $k = \max\{\ell: F^n_{\ell} < 1\}+2$ and $k'=\min\{\ell: F^n_{\ell}>0\}-2$, then $k$ and $k'$ are both finite. Moreover, $F^n_{k-2} < F^n_{k-1} =  F^n_{k}=1$ and $0=F^n_{k'} = F^n_{k'+1} < F^n_{k'+2}$, and thus if $\P(|D_n|>1) > 0$ then by the above theorem, at no point in the evolution will the process reach a time at which monotonicity can be invoked. Without monotonicity, we can not apply the Crandall-Lions methodology, so the proof technique used in this paper fails. 

\subsection*{Acknowledgements}
LAB was partially supported by NSERC Discovery Grant 643473 and Discovery Accelerator Supplement 643474. EB was partially supported by  NSERC Discovery Grants 247764 and 643473. JL was partially supported by NSERC Discovery Grant 247764, FRQNT Grant 250479, and the Canada Research Chairs program. We thank Gavin Barill and Maeve Wildes for useful discussions pertaining to the convergence of finite difference schemes for Hamilton-Jacobi equations. 

\appendix
\section{An introduction to viscosity solutions}\label{app}
In this section, we provide a self-contained description of Crandall-Lions (continuous) and Barron-Jensen (lsc) viscosity solutions. The results of this section are classical and can be found in various references such as \cite{users, lionsbook, ABI, Bbook}. 

We will work throughout this section with the model equation 
\begin{equation}\label{e.ahj}
u_{t}+H(u_{x})=0, 
\end{equation}
where $H: \RR\rightarrow \RR$. We also define the Cauchy problem, given by
\begin{equation}\label{e.achj}
\begin{cases}
u_{t}+H(u_{x})=0&\text{in $\mathbb{R}\times (0, \infty)$},\\
u(x,0)=u_{0}(x)&\text{in $\mathbb{R}$.}
\end{cases}
\end{equation}

We begin with the theory of continuous viscosity solutions. 
\begin{define}
Let $u: \RR\times(0, \infty)\rightarrow \RR$. 
We say that $u$ is a viscosity {\em subsolution} of \eqref{e.ahj} at $(x_{0}, t_{0})$ if $u$ is upper semicontinuous at $(x_{0}, t_{0})$, and for any function 
$\vp\in C^{1}(\RR\times (0, \infty))$ such that $u-\vp$ has a local maximum at $(x_{0},t_{0})$, we have 
\begin{equation*}
\vp_{t}(x_{0},t_{0})+H(\vp_{x}(x_{0}, t_{0}))\leq 0.
\end{equation*}
We say that $u$ is a viscosity {\em supersolution} of \eqref{e.ahj} at $(x_{0}, t_{0})$ if $u$ is lower semicontinuous at $(x_{0}, t_{0})$, and for any function 
$\vp\in C^{1}(\RR\times (0, \infty))$ such that $u-\vp$ has a local minimum at $(x_{0},t_{0})$, we have
\begin{equation*}
\vp_{t}(x_{0},t_{0})+H(\vp_{x}(x_{0}, t_{0}))\ge 0.
\end{equation*}
Finally, we say that $u$ is a viscosity solution of \eqref{e.achj} if and only if $u$ is both a viscosity subsolution and supersolution of \eqref{e.ahj} for all $(x_{0}, t_{0})\in \RR\times (0, \infty)$ and, additionally, for all $x \in \R$, $u(y,t) \to u_0(x)$ as $(y,t) \to (x,0)$. As $u$ is then both upper and lower semicontinuous on $\RR\times (0, \infty)$, $u$ is necessarily continuous. 
\end{define}

One can also interpret the definition of viscosity solutions from a geometric perspective. The condition that $u-\vp$ has a local max/min at $(x_{0}, t_{0})$ can always be replaced by the function $\vp$ touching $u$ at the point $(x_{0}, t_{0})$ from above/below. Indeed, when $u-\vp$ has a local max at $(x_{0}, t_{0})$, we may adjust $\vp$ (adding appropriate constants and strictly convex/concave functions) to obtain $\tilde{\vp}$ such that 
\begin{equation*}
u<\tilde{\vp} \quad\text{in $\RR\times (0, \infty)$, except at $(x_0, t_{0})$ where $u(x_{0}, t_{0})=\tilde{\vp}(x_{0}, t_{0}).$} 
\end{equation*}

If $u$ is differentiable at $(x_{0}, t_{0})$ and satisfies \eqref{e.ahj} at $(x_{0}, t_{0})$, then $u$ automatically satisfies \eqref{e.ahj} in the viscosity sense at $(x_{0},t_{0})$. The notion of viscosity solution entails that if $u$ is not differentiable at $(x_{0}, t_{0})$, one uses a smooth test function which ``touches'' $u$ at the point $(x_{0}, t_{0})$ on either side to evaluate the PDE at $(x_{0}, t_{0})$. Compared to other notions of weak solutions of PDEs (for example, distributional solutions which are based on integration by parts), viscosity solutions are particularly amenable to \emph{nonlinear} PDEs. We now recall the basic existence and uniqueness result for continuous viscosity solutions which we will use throughout the paper: 

\begin{thm}\cite[Theorem VI.2]{CL}\label{t.cvisc}
Consider \eqref{e.achj} with $H$ continuous and $u_{0}$ bounded and uniformly continuous. There exists a unique continuous viscosity solution $u$ of \eqref{e.achj}. Moreover, 
\[
|u(x,t)-u(y,t)|\leq \sup_{\zeta\in \RR}|u_{0}(\zeta)+u_{0}(\zeta+y-x)|\quad\text{for $x,y\in \RR, t\geq 0$}. 
\]
\end{thm}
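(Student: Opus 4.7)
The plan is to reduce the theorem to a single central tool --- the \emph{comparison principle} for viscosity sub- and supersolutions of \eqref{e.ahj}. Once comparison is available, uniqueness follows by applying it to two alleged solutions in both orders; the modulus-of-continuity estimate is a short corollary of comparison plus the translation invariance of $H$; and existence can be obtained via Perron's method using explicit barriers built from $u_0$. The main obstacle, unsurprisingly, is the comparison principle itself, and specifically handling the unbounded spatial domain.

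For comparison, the goal is: if $u$ is a bounded USC subsolution and $v$ a bounded LSC supersolution of \eqref{e.ahj} on $\R\times (0,T)$ with $u(\cdot,0)\le v(\cdot,0)$, then $u\le v$ throughout. I would use the Kruzhkov--Crandall--Lions doubling-of-variables technique. After replacing $v$ by $v+\eta/(T-t)$ (so that it becomes a strict supersolution blowing up as $t\uparrow T$), consider
\[
\Phi_{\e,\delta}(x,y,t,s) := u(x,t) - v(y,s) - \frac{(x-y)^2 + (t-s)^2}{2\e^2} - \delta(x^2+y^2)
\]
on $\R^2\times [0,T]^2$. The quadratic penalty $\delta(x^2+y^2)$ is what forces $\Phi_{\e,\delta}$ to attain a finite maximum at some $(x_\e,y_\e,t_\e,s_\e)$, and standard penalization estimates give $|x_\e-y_\e|,|t_\e-s_\e|\to 0$ as $\e\to 0$ while keeping $(x_\e-y_\e)/\e^2$ and $(t_\e-s_\e)/\e^2$ bounded. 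The quadratic penalty also provides natural test functions touching $u$ from above and $v$ from below at the maximizer; plugging these into the sub- and supersolution inequalities and subtracting cancels the $H(p)$ terms by continuity of $H$ at the common limiting slope, yielding a contradiction unless $u(x_\e,t_\e)\le v(y_\e,s_\e)$. Sending $\e\to 0$, then $\delta\to 0$, and finally $\eta\to 0$ delivers $u\le v$. The subtle point is exactly the unbounded domain, which is why both the quadratic penalty $\delta(x^2+y^2)$ and the interior barrier $\eta/(T-t)$ are needed.

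For existence I would run Perron's method, defining
\[
u(x,t) := \sup\bigl\{w(x,t) : w \text{ is a USC subsolution of \eqref{e.ahj} with } \limsup_{(y,s)\to(x,0)} w(y,s)\le u_0(x)\bigr\}.
\]
Because $u_0$ is bounded and uniformly continuous, for every $\alpha>0$ one can mollify $u_0$ to produce Lipschitz approximants $u_0^{\pm,\alpha}$ with $\|u_0-u_0^{\pm,\alpha}\|_\infty\le\alpha$, and then $u_0^{\pm,\alpha}(x)\pm C_\alpha t$ provides explicit Lipschitz sub- and supersolutions pinning $u$ to $u_0$ as $t\downarrow 0$ and certifying local boundedness of the sup. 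The standard Perron argument (bump-up-to-contradict-maximality) then shows that the USC envelope $u^*$ is a subsolution and the LSC envelope $u_*$ is a supersolution. Applying the comparison principle from the previous step to $u^*$ and $u_*$ gives $u^*\le u_*$; combined with $u_*\le u\le u^*$ by construction, this forces $u=u^*=u_*$, so $u$ is continuous and solves \eqref{e.achj}.

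Finally, for the modulus-of-continuity estimate (which as stated appears to contain a sign typo and should read $\sup_{\zeta\in\R}|u_0(\zeta)-u_0(\zeta+y-x)|$), fix $h:=y-x$ and set $\tilde u(\xi,t):=u(\xi+h,t)$. Since $H$ does not depend on $x$, translation invariance implies $\tilde u$ is a viscosity solution of \eqref{e.ahj} with initial datum $u_0(\cdot+h)$. Writing $M:=\sup_{\zeta\in\R}|u_0(\zeta+h)-u_0(\zeta)|$, the constant-shifted functions $\tilde u\pm M$ are again viscosity solutions whose initial data bracket $u_0$. Two applications of the comparison principle, one in each direction, give $|u(\xi,t)-\tilde u(\xi,t)|\le M$ for all $\xi,t$; specializing to $\xi=x$ yields the stated bound. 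This last step is a formality given comparison; all the work is in the second paragraph.
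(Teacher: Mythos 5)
The paper states Theorem~\ref{t.cvisc} purely as a citation to Crandall and Lions \cite{CL} and gives no proof, so there is no in-paper argument to compare against; your proposal supplies what the paper deliberately omits. Your outline is precisely the standard Crandall--Lions methodology: uniqueness via doubling of variables with a $\delta(x^2+y^2)$ localization and an $\eta/(T-t)$ interior barrier, existence via Perron's method with Lipschitz barriers $u_0^{\pm,\alpha}(x)\pm C_\alpha t$ obtained by mollifying the (bounded, uniformly continuous) initial datum, and the spatial modulus estimate via translation invariance of the $x$-independent Hamiltonian plus two applications of comparison. That is the right route, and at the level of a sketch it is sound. You also correctly flag the sign typo in the displayed estimate: it should read $\sup_{\zeta}|u_0(\zeta)-u_0(\zeta+y-x)|$ (the $+$ version fails already for $u_0\equiv 1$), and the paper itself relies on the difference version in Proposition~\ref{p.regularity} to deduce $|u_x|\le K$.

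One caution on the comparison step. The phrase ``cancels the $H(p)$ terms by continuity of $H$ at the common limiting slope, while keeping $(x_\e-y_\e)/\e^2$ bounded'' overstates what the penalization estimate gives. The estimate $\frac{(x_\e-y_\e)^2}{2\e^2}\le 2(\|u\|_\infty+\|v\|_\infty)$ yields $|x_\e-y_\e|=O(\e)$, hence the leading slope is only $O(1/\e)$, not bounded uniformly in $\e$; and since $H$ is assumed merely continuous (not uniformly continuous on $\R$), one cannot let $\e\to 0$ and $\delta\to 0$ simultaneously and cite continuity at ``the'' limiting slope. The standard fix is to order the limits: at fixed $\e$ send $\delta\to 0$, so that the two slopes $p_\e=(x_\e-y_\e)/\e^2+2\delta x_\e$ and $q_\e=(x_\e-y_\e)/\e^2-2\delta y_\e$ stay in a fixed compact set while $|p_\e-q_\e|\le C\sqrt{\delta}\to 0$, giving the contradiction with the strict supersolution term $\eta/(T-t_\e)^2$; the limit $\e\to 0$ is then used separately to rule out the maximizer degenerating to $\{t=0\}$, via the hypothesis $u(\cdot,0)\le v(\cdot,0)$ and the common modulus of continuity of the initial data. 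A careful write-up would need to make this ordering explicit. With that caveat, the proposal is a correct and complete outline of the classical proof.
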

It is well known (see for example \cite[10.3, Theorem 3]{evans} that when $H(p)$ is convex and $\lim_{|p|\rightarrow \infty}\frac{H(p)}{|p|}=+\infty$, the unique continuous viscosity solution is given by the Hopf-Lax Formula 
\begin{equation*}
u(x,t)=\inf_{y\in \R} \left\{u_{0}(y)+tH^{*}\left(\frac{x-y}{t}\right)\right\}. 
\end{equation*}

The crown jewel of continuous viscosity solutions theory is the celebrated comparison principle, which is an extremely useful tool for analysis: 

\begin{thm}\cite[Theorem 8.2]{users}\label{t.comparison}
Consider \eqref{e.achj} with $H$ continuous. If $u$ is a subsolution of \eqref{e.ahj} and $v$ is a supersolution of \eqref{e.ahj}, 
and $u(x,0)=u_{0}(x)\leq v_{0}(x)=v(x,0)$ with $u_{0}, v_{0}$ bounded and uniformly continuous, then $u(x,t)\leq v(x,t)$ for all $t>0$. 
\end{thm}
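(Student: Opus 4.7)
The plan is to prove this comparison principle by the \emph{doubling variables} method of Crandall-Ishii-Lions. I argue by contradiction: suppose that $M := \sup_{(x,t) \in \R \times [0,T]}(u(x,t) - v(x,t)) > 0$ for some $T > 0$, and derive a contradiction. For small parameters $\eps, \eta, \gamma > 0$, introduce the doubled penalty
$$\Phi_\eps(x,y,t,s) := u(x,t) - v(y,s) - \frac{|x-y|^2}{2\eps} - \frac{(t-s)^2}{2\eps} - \eta(|x|^2+|y|^2) - \gamma(t+s).$$
The quadratic spatial penalty forces $\Phi_\eps \to -\infty$ as $|x|+|y| \to \infty$, so a supremum is attained at some $(x_\eps, y_\eps, t_\eps, s_\eps)$; for $\eta, \gamma$ chosen small relative to $M$, this supremum is at least $M/2 > 0$. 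Standard penalization estimates then show that $\eps^{-1}|x_\eps-y_\eps|^2$ and $\eps^{-1}(t_\eps-s_\eps)^2$ vanish as $\eps \to 0$, so in particular $|x_\eps-y_\eps|,|t_\eps-s_\eps| \to 0$; and using $u_0 \le v_0$ together with uniform continuity of $u_0, v_0$ and of $u,v$ up to $t=0$, one checks that $t_\eps, s_\eps > 0$ for small $\eps$.

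Next, apply the viscosity inequalities at this maximum. Holding $(y_\eps, s_\eps)$ fixed, the penalty provides a $C^1$ test function that touches $u$ from above at $(x_\eps, t_\eps)$; symmetrically, a test function touches $v$ from below at $(y_\eps, s_\eps)$. Writing $p_\eps := (x_\eps - y_\eps)/\eps$, the subsolution and supersolution conditions read
$$\frac{t_\eps - s_\eps}{\eps} + \gamma + H\bigl(p_\eps + 2\eta x_\eps\bigr) \le 0, \qquad \frac{t_\eps - s_\eps}{\eps} - \gamma + H\bigl(p_\eps - 2\eta y_\eps\bigr) \ge 0.$$
Subtracting the first from the second cancels the $(t_\eps-s_\eps)/\eps$ term and yields
$$2\gamma \le H(p_\eps - 2\eta y_\eps) - H(p_\eps + 2\eta x_\eps).$$

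The main obstacle is showing that the right-hand side vanishes in the iterated limit $\eps \to 0$ then $\eta \to 0$. Since $H$ is only assumed continuous, not uniformly so on all of $\R$, one must first confine $p_\eps$ to a fixed bounded set and ensure $\eta x_\eps, \eta y_\eps \to 0$. Uniform continuity of $u_0, v_0$ propagates (via Theorem~\ref{t.cvisc}) to a common modulus of continuity in space for $u$ and $v$ themselves, which bounds $|p_\eps|$ uniformly in $\eps$; meanwhile, boundedness of $u$ and $v$ combined with the inequality $\Phi_\eps(x_\eps, y_\eps, t_\eps, s_\eps) \ge M/2$ forces $\eta(|x_\eps|^2 + |y_\eps|^2) = O(1)$, so $\eta|x_\eps|, \eta|y_\eps| = O(\eta^{1/2}) \to 0$. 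With $p_\eps$ confined to a compact set, continuity of $H$ on that set forces the right-hand side above to $0$, yielding $2\gamma \le 0$. Since $\gamma > 0$ was arbitrary, this contradicts $M > 0$, and hence $u \le v$ throughout $\R \times [0,\infty)$.
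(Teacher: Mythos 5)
This theorem is quoted in the paper from \cite[Theorem 8.2]{users} without proof, so there is no internal argument to compare against; I will therefore only assess your argument. Your setup is the standard doubling-of-variables scheme, and the mechanical parts are correct: the penalized supremum is attained, the test functions you extract give exactly the two viscosity inequalities you write, and subtracting them cancels $(t_\eps-s_\eps)/\eps$ to leave $2\gamma \le H(p_\eps - 2\eta y_\eps) - H(p_\eps + 2\eta x_\eps)$.

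The gap is in the final step, where you claim that a spatial modulus of continuity for $u$ and $v$ ``bounds $|p_\eps|$ uniformly in $\eps$.'' It does not. The inequality $\Phi_\eps(x_\eps,y_\eps,t_\eps,s_\eps) \ge \Phi_\eps(y_\eps,y_\eps,t_\eps,s_\eps)$ gives
\[
\frac{|x_\eps-y_\eps|^2}{2\eps} \le \omega(|x_\eps-y_\eps|) + O(\eta^{1/2})\,|x_\eps-y_\eps|,
\]
which yields only $|x_\eps-y_\eps|^2/\eps \to 0$, i.e.\ $|p_\eps| = o(\eps^{-1/2})$. To upgrade this to $|p_\eps|=O(1)$ you need $\omega(r)\le Lr$, i.e.\ Lipschitz (not merely uniform) continuity of $u$ or $v$ in $x$: with H\"older data, say $\omega(r)=\sqrt r$, the same inequality gives $|x_\eps-y_\eps|\asymp \eps^{2/3}$ and hence $|p_\eps|\asymp\eps^{-1/3}\to\infty$. (A second, smaller issue: Theorem~\ref{t.cvisc} provides the spatial modulus only for the unique \emph{solution}, not for an arbitrary sub- or supersolution, so even the modulus itself must be taken as a hypothesis rather than ``propagated.'') Since $H$ is assumed only continuous, once $p_\eps$ can escape every compact set you cannot conclude that $H(p_\eps-2\eta y_\eps)-H(p_\eps+2\eta x_\eps)\to 0$, and the contradiction $2\gamma\le 0$ is not reached. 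Note that if $H$ were \emph{uniformly} continuous no bound on $p_\eps$ would be needed, since the two arguments differ by $2\eta(|x_\eps|+|y_\eps|)=O(\eta^{1/2})$; and if $u_0,v_0$ (hence, by Theorem~\ref{t.cvisc}, the solutions) were Lipschitz, your bound on $p_\eps$ would hold. The case actually asserted --- $H$ merely continuous, data merely bounded and uniformly continuous --- requires an additional approximation step (e.g.\ reducing to Lipschitz initial data or to a uniformly continuous Hamiltonian agreeing with $H$ on a ball containing all relevant gradients), which is exactly the part of the classical proof your sketch elides. For the paper's actual application of this theorem (Proposition~\ref{p.regularity}, where $u_0$ is Lipschitz) your argument would suffice, but as a proof of the stated theorem it is incomplete.
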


Using the Comparison Principle (Theorem \ref{t.comparison}), we can show that $u$ solving \eqref{e.achj} satisfies additional regularity estimates: 
\begin{prop}\label{p.regularity}
Let $u$ denote the unique continuous viscosity solution of \eqref{e.achj} with $u_{0}$ bounded and Lipschitz continuous with Lipschitz constant $K>0$. Then there exists $C>0$ such that for all $(x,t)\in \RR\times (0, \infty)$, 
\begin{equation*}
\begin{cases}
|u_{t}(x,t)|\leq C,\\
|u_{x}(x,t)|\leq K.
\end{cases}
\end{equation*}
\end{prop}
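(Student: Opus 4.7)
The plan is to prove the two estimates separately, in both cases via the comparison principle (Theorem~\ref{t.comparison}) together with the translation invariance of \eqref{e.ahj}. For the spatial bound, I would fix any $h \in \R$ and set $v(x,t) := u(x+h,t)$, which is again a continuous viscosity solution of \eqref{e.ahj} but with initial data $u_0(\cdot + h)$. The $K$-Lipschitz hypothesis on $u_0$ gives $\|u_0(\cdot + h) - u_0\|_\infty \le K|h|$, and the functions $u(x,t) \pm K|h|$ are themselves viscosity solutions of \eqref{e.ahj} (since the Hamiltonian depends only on $u_x$) with correspondingly shifted initial data. Theorem~\ref{t.comparison} applied in both directions then yields $|u(x+h,t) - u(x,t)| \le K|h|$ for every $x$ and $t$, which is the desired bound on $u_x$.

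For the temporal bound, set $C := \sup_{|p| \le K} |H(p)|$, which is finite by continuity of $H$. I would first establish the a priori estimate $|u(x,t) - u_0(x)| \le Ct$ by constructing explicit super- and subsolutions $w^{\pm}(x,t) := u_0(x) \pm Ct$ and invoking comparison. To verify that $w^+$ is a viscosity supersolution, suppose $\phi \in C^1$ is such that $w^+ - \phi$ has a local min at $(x_0, t_0)$ with $t_0 > 0$. Testing in the $x$-variable at $t = t_0$ shows that $\phi_x(x_0, t_0)$ is a subgradient of $u_0$ at $x_0$, so $|\phi_x(x_0, t_0)| \le K$ since $u_0$ is $K$-Lipschitz. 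Testing in the $t$-variable at $x = x_0$ yields $\phi_t(x_0, t_0) = C$. Hence $\phi_t(x_0,t_0) + H(\phi_x(x_0,t_0)) = C + H(\phi_x(x_0, t_0)) \ge C - C = 0$, so $w^+$ is a supersolution; the argument for $w^-$ is symmetric, and Theorem~\ref{t.comparison} then yields the a priori estimate.

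The bound $|u_t| \le C$ would then follow by combining this with the spatial estimate and the time-translation invariance of \eqref{e.ahj}. For $s > 0$ and $t_0 \ge 0$, the function $\tilde u(x, \tau) := u(x, t_0 + \tau)$ is the continuous viscosity solution of \eqref{e.ahj} with initial data $u(\cdot, t_0)$, which is $K$-Lipschitz by the spatial bound just established; applying the a priori estimate to $\tilde u$ with the same constant $C$ then yields $|u(x, t_0 + s) - u(x, t_0)| \le Cs$, and dividing by $s$ and sending $s \to 0$ completes the proof. I expect the only technical subtlety to be the viscosity verification that $w^{\pm}$ are super-/subsolutions when $u_0$ is not $C^1$, but this reduces to the elementary observation that every sub- or supergradient of a $K$-Lipschitz function has norm at most $K$, so no serious obstacle arises.
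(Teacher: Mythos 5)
Your proposal is correct and follows essentially the same route as the paper: the barriers $u_0(x)\pm Ct$ with $C=\sup_{|p|\le K}|H(p)|$, the comparison principle (Theorem~\ref{t.comparison}), and a time-translation argument to upgrade the $t=0$ estimate $|u(x,t)-u_0(x)|\le Ct$ to $|u_t|\le C$ everywhere. The only cosmetic differences are that the paper obtains $|u_x|\le K$ by citing the modulus-preservation statement of Theorem~\ref{t.cvisc} rather than re-deriving it via comparison with spatial translates, and it propagates the time estimate by comparing $u$ with $u(\cdot,\cdot+s)\pm\|u(\cdot,0)-u(\cdot,s)\|_{L^{\infty}}$ rather than invoking the semigroup property; your explicit viscosity verification that $u_0\pm Ct$ are super-/subsolutions is a detail the paper asserts without proof.
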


\begin{proof}
The fact that  $|u_{x}|\leq K$ in all of $\RR\times (0, \infty)$ is automatic by Theorem \ref{t.cvisc}.  We now show that $u_{t}$ is uniformly bounded. In order to do so, we note that for $C:=\sup_{|p|\leq K}H(p)$, 
\begin{align*}
v(x,t):=u_{0}(x)+Ct \quad \mbox{ and }\quad
w(x,t):=u_{0}(x)-Ct
\end{align*}
are both super and subsolutions of \eqref{e.achj} respectively. Therefore, the Comparison Principle (Theorem \ref{t.comparison}) yields
\begin{equation*}
u_{0}(x)-Ct\leq u(x,t)\leq u_{0}(x)+Ct, 
\end{equation*}
which implies that 
\begin{equation}\label{e.intlip}
\sup_{t>0}\left|\frac{u(x,t)-u_{0}(x)}{t}\right|\leq C,
\end{equation}
for all $x\in \RR$. Now, for any $s > 0$, considering the function $u^{s}(x,t):=u(x,t+s)$, we have that 
\begin{equation*}
u^{s}(x,0)-\norm{u(x,0)-u^{s}(x,0)}_{L^{\infty}}\leq u(x,0)\leq u^{s}(x,0)+\norm{u(x,0)-u^{s}(x,0)}_{L^{\infty}}. 
\end{equation*}
Another application of the Comparison Principle (Theorem \ref{t.comparison}) implies that 
\begin{equation*}
u^{s}(x,t)-\norm{u(x,0)-u^{s}(x,0)}_{L^{\infty}}\leq u(x,t)\leq u^{s}(x,t)+\norm{u(x,0)-u^{s}(x,0)}_{L^{\infty}}, 
\end{equation*}
so that by \eqref{e.intlip}, 
\begin{equation*}
\left|u(x,t+s)-u(x,t)\right|\leq \norm{u(x,0)-u^{s}(x,0)}_{L^{\infty}}\leq Cs. 
\end{equation*}
This implies that $|u_{t}|\leq C$ for all $(x,t)\in \RR\times (0, \infty)$. 
\end{proof}

We now introduce the notion of Barron-Jensen or lower semicontinuous viscosity solutions, which is only defined when $H$ is a convex function. 

\begin{define} 
A lower semicontinuous function $u: \RR\times (0, \infty) \rightarrow\RR$ is an lsc viscosity solution of \eqref{e.ahj} at $(x_{0}, t_{0})$ if for every $\vp\in C^{1}(\RR\times (0, \infty))$ such that $u-\vp$ has a local minimum at $(x_{0}, t_{0})$, we have that 
\begin{equation*}
\vp_{t}(x_{0},t_{0})+H(\vp_{x}(x_{0}, t_{0}))=0.
\end{equation*}
We say that $u$ is a lsc solution of \eqref{e.achj} if $u$ is a lsc viscosity solution for all $(x_{0}, t_{0})\in \RR\times (0, \infty)$ and \begin{equation*}
\inf\left\{\liminf_{n\rightarrow \infty} u(x_{n}, t_{n}) \mid t_{n}\rightarrow 0, x_{n}\rightarrow x\right\}=u_{0}(x). 
\end{equation*}
\end{define}

In the case when $u$ is continuous, we have an equivalence between the two definitions:
\begin{thm}\cite[Theorem 16]{Bbook}\label{t.equiv}
Assume $H$ is convex. A continuous function is a viscosity solution of \eqref{e.ahj} if and only if it is a lsc viscosity solution \eqref{e.ahj}. 
\end{thm}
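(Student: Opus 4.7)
The plan is to prove the equivalence in both directions, using convexity of $H$ in an essential way. Since $u$ is continuous, its lower and upper semicontinuous envelopes both coincide with $u$, so the only structural difference between the two notions is: the Crandall--Lions (CL) definition tests at both upper contact points of $u-\varphi$ (with $\varphi_t+H(\varphi_x)\le 0$) and lower contact points (with $\ge 0$), whereas the Barron--Jensen (BJ) definition tests only at lower contact points but demands equality $\varphi_t+H(\varphi_x)=0$. Thus each direction requires converting information obtained from tests of one type into information about tests of the other type, and convexity of $H$ is what enables this conversion.

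For the direction BJ $\Rightarrow$ CL, the CL supersolution property is immediate from the BJ equality. What remains is to establish the CL subsolution property at an arbitrary upper contact point. Assuming toward contradiction that $\psi\in C^1$ and $u-\psi$ has a strict local max at $(x_0,t_0)$ with $\psi_t(x_0,t_0)+H(\psi_x(x_0,t_0))=\eta>0$, I would employ a sup-convolution
\[
u^\varepsilon(x,t)=\sup_{(y,s)}\left\{u(y,s)-\tfrac{1}{2\varepsilon}\bigl(|x-y|^2+|t-s|^2\bigr)\right\},
\]
which is locally semiconvex and hence differentiable almost everywhere. The BJ property of $u$ transfers through the sup-convolution to yield $u^\varepsilon_t+H(u^\varepsilon_x)\le 0$ a.e., and convexity of $H$ together with Jensen's inequality on the superdifferential of $u$ at $(x_0,t_0)$ would then produce a contradiction with $\eta>0$.

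For the direction CL $\Rightarrow$ BJ, the task is to upgrade the CL supersolution inequality $\varphi_t+H(\varphi_x)\ge 0$ at a lower contact point of $u-\varphi$ into an equality. Exploiting convexity via Legendre duality, pick $\alpha^*\in\partial H(\varphi_x(x_0,t_0))$, which is nonempty by convexity of $H$, so that $H(\varphi_x(x_0,t_0))=\alpha^*\varphi_x(x_0,t_0)-H^*(\alpha^*)$. The affine function $\ell(x,t)=u(x_0,t_0)+\alpha^*(x-x_0)-H(\varphi_x(x_0,t_0))(t-t_0)$ is then a classical solution of the PDE. If one assumes $\varphi_t+H(\varphi_x)=\eta>0$, then by translating $\ell$ vertically downward and using continuity of $u$, one constructs upper contact points for $u$ at locations slightly displaced along the characteristic direction $(\alpha^*,1)$; applying the CL subsolution property at those points forces an inequality contradicting $\eta>0$.

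The main obstacle will be the BJ $\Rightarrow$ CL direction: constructing upper contact points for a BJ-type solution is inherently delicate because the lsc framework is only naturally compatible with testing from below, so the sup-convolution regularization and transfer of the BJ equality through it require careful localization arguments. A cleaner alternative, which I would prefer in a final write-up, is to characterize continuous CL solutions via the Hopf--Lax semigroup identity $u(x,t)=\inf_y\{u(y,s)+(t-s)H^*((x-y)/(t-s))\}$ for $s<t$, show that the same identity characterizes continuous BJ solutions via the comparison principle of Theorem~\ref{t.bjhl}, and conclude by uniqueness.
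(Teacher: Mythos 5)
First, a point of reference: the paper does not prove this statement at all --- it is quoted verbatim from the cited source (\cite[Theorem 16]{Bbook}) --- so there is no internal proof to compare against; I am evaluating your argument on its own terms. As written, both directions contain concrete errors.

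In the BJ $\Rightarrow$ CL direction you have the regularization backwards. The Barron--Jensen condition constrains $u$ only at points where a smooth function touches $u$ \emph{from below}. In the sup-convolution $u^\varepsilon$, the point $(y_0,s_0)$ realizing the supremum is a point where the shifted paraboloid touches $u$ \emph{from above}, so the lsc-solution property gives no information there, and the claimed transfer ``$u^\varepsilon_t+H(u^\varepsilon_x)\le 0$ a.e.'' does not follow. The correct tool is the inf-convolution $u_\varepsilon(x,t)=\inf_{(y,s)}\{u(y,s)+\tfrac{1}{2\varepsilon}(|x-y|^2+|t-s|^2)\}$: its argmin is a lower contact point of $u$, so the BJ equality yields $\partial_t u_\varepsilon+H(\partial_x u_\varepsilon)=0$ at almost every point of the Lipschitz, semiconcave function $u_\varepsilon$; one then mollifies and applies Jensen's inequality, using the convexity of $H$, to see that the mollification is a classical subsolution up to a vanishing error, hence $u_\varepsilon$ is a viscosity subsolution, and finally lets $\varepsilon\to 0$ using the continuity of $u$. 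Convexity enters through Jensen applied to $H(\partial_x u_\varepsilon * \rho_\delta)\le H(\partial_x u_\varepsilon)*\rho_\delta$, not ``on the superdifferential of $u$ at $(x_0,t_0)$.''

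In the CL $\Rightarrow$ BJ direction, your comparison function $\ell(x,t)=u(x_0,t_0)+\alpha^*(x-x_0)-H(\varphi_x(x_0,t_0))(t-t_0)$ is not a classical solution of \eqref{e.ahj}: one computes $\ell_t+H(\ell_x)=H(\alpha^*)-H(\varphi_x(x_0,t_0))$, which vanishes only in degenerate cases. You have conflated the gradient variable $p=\varphi_x(x_0,t_0)$ with the characteristic velocity $\alpha^*\in\partial H(p)$; affine solutions have the form $c+p(x-x_0)-H(p)(t-t_0)$, whereas $\alpha^*$ is the argument of $H^*$ in the Hopf--Lax formula. Moreover, translating a function \emph{downward} cannot create upper contact points with $u$, and no mechanism is given for locating where contact would occur. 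Your closing ``cleaner alternative'' --- identify both notions with the Hopf--Lax semigroup on time slabs $[s,\infty)$ with data $u(\cdot,s)$, then invoke the uniqueness statements of Theorem~\ref{t.cvisc} and Theorem~\ref{t.bjhl} --- is in fact the most viable route and is the one I would encourage you to develop; but it is currently a one-sentence sketch, and you must verify that $u(\cdot,s)$ meets the hypotheses of both uniqueness theorems on each slab. As it stands, neither direction of the proposed argument goes through.
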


Finally, we recall that in the case when $H$ is convex, a natural candidate for a solution (from the point of view of optimal control) is the solution given by the Hopf-Lax formula. In the cases when $u_{0}$ is lower semicontinuous and bounded below, the Hopf-Lax formula gives rise to the unique lsc viscosity solution. 

\begin{thm}\cite[Theorem 5.2]{ABI}\label{t.bjhl}
Let $u_{0}: \RR\rightarrow \RR$ be lsc with 
\begin{equation*}
u_{0}(x)\geq -C(|x|+1).
\end{equation*}
 Let $H: \RR\rightarrow \RR$ be convex and Lipschitz. Then 
 \begin{equation*}
 u(x,t)=\inf_{y\in \RR} \left\{u_{0}(y)+tH^{*}\left(\frac{x-y}{t}\right)\right\}
 \end{equation*}
 is the unique lsc viscosity solution of \eqref{e.achj} bounded from below by a function of linear growth. 
\end{thm}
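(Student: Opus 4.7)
The plan is to address this result in two parts: first, that the Hopf--Lax function
\[
u(x,t) := \inf_{y \in \RR} \Bigl\{u_0(y) + t H^{*}\bigl((x-y)/t\bigr)\Bigr\}
\]
is an lsc viscosity solution of \eqref{e.achj}, and second, that it is the unique such solution bounded below by a function of linear growth. The crucial tool throughout is the dynamic programming principle, which asserts that for $0 < s < t$,
\[
u(x,t) = \inf_{y \in \RR}\Bigl\{u(y, t-s) + s H^{*}\bigl((x-y)/s\bigr)\Bigr\}.
\]
This follows from the identity $(s_1 H^{*}(\cdot/s_1)) \Box (s_2 H^{*}(\cdot/s_2)) = (s_1+s_2) H^{*}(\cdot/(s_1+s_2))$ (with $\Box$ denoting infimal convolution), which in turn is an immediate consequence of the convexity and positive homogeneity structure of $sH^*(\cdot/s)$.

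For existence, I would first verify the regularity and initial-trace properties of $u$. Since $H$ is Lipschitz with some constant $L$, the Legendre transform $H^{*}$ is finite on $[-L,L]$ and $+\infty$ elsewhere, so the infimum effectively ranges over $|y - x| \leq L t$. Combined with the linear lower bound on $u_0$, this compactness ensures that the infimum is attained, that $u$ is lsc, and that $u$ itself is bounded below by a function of linear growth. The initial trace is verified by choosing $y = x$ to get $\limsup u(x_n,t_n) \leq u_0(x)$ along any sequence $(x_n,t_n)\to(x,0)$, while the confinement $|y_n - x_n| \leq L t_n$ of near-minimizers combined with the lsc of $u_0$ yields the matching $\liminf u(x_n,t_n) \geq u_0(x)$. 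For the Barron--Jensen equation, let $\varphi \in C^1$ be such that $u - \varphi$ attains a local minimum at $(x_0,t_0)$. Substituting $y = x_0 - s v$ into the dynamic programming principle, using $u \geq \varphi$ near $(x_0,t_0)$ with equality there, and expanding in $s$ produces $\varphi_t(x_0,t_0) \leq H^{*}(v) - v \varphi_x(x_0,t_0)$ for every $v \in \RR$. Taking the infimum over $v$ yields $-H^{**}(\varphi_x) = -H(\varphi_x)$ by convexity, hence $\varphi_t + H(\varphi_x) \leq 0$. The matching inequality $\varphi_t + H(\varphi_x) \geq 0$ follows by inserting the attained minimizer $y^{*}$ into the dynamic programming principle, whose optimality along the affine segment from $(y^{*},0)$ to $(x_0,t_0)$ forces equality in the expansion.

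For uniqueness, let $v$ be any other lsc viscosity solution with the stated linear lower bound. I would show $v = u$ directly by establishing both inequalities. For $v(x,t) \leq u_0(y) + t H^{*}((x-y)/t)$ for every $y$, I consider the characteristic segment from $(y,0)$ to $(x,t)$ and use the Barron--Jensen equality $\varphi_s + H(\varphi_z) = 0$ at local minima of $v - \varphi$, applied to a test function linear in the direction $(x-y)/t$, to propagate the bound along the segment. The reverse inequality $v(x,t) \geq \inf_y\{u_0(y) + t H^{*}((x-y)/t)\}$ is obtained by showing that $v$ itself satisfies the same dynamic programming identity as $u$ (a consequence of the Barron--Jensen equality combined with the convexity of $H$, which forces the value function structure along characteristics), and then passing to the limit $s \uparrow t$ in $v(x,t) = \inf_y\{v(y, t-s) + s H^{*}((x-y)/s)\}$ while invoking the prescribed initial trace of $v$.

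The principal obstacle is the lsc-only regularity: the standard comparison principle (Theorem \ref{t.comparison}) applies only to continuous viscosity solutions, and since $u_0$ may genuinely jump, the same is true of $v$ at positive times, so one cannot simply quote it. The circumvention relies on the stronger Barron--Jensen equality (available because $H$ is convex), which provides enough rigidity along characteristic lines to propagate uniqueness through discontinuities. The most delicate aspect is justifying the limit $s \uparrow t$ in the dynamic programming identity for $v$ given that $v$ is only lsc and the trace is taken in the generalized sense, and this is where the linear lower bound is essential to rule out escape of near-minimizers to infinity.
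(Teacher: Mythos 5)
This theorem is quoted in the paper from \cite[Theorem 5.2]{ABI} without proof, so there is no in-paper argument to compare against; I am therefore assessing your proposal on its own terms. The existence half is essentially the standard and correct argument: the semigroup property of the infimal convolutions gives the dynamic programming principle, the Lipschitz bound on $H$ confines near-minimizers to $|y-x|\le Lt$ so that the infimum is attained and $u$ inherits the linear lower bound, and the two test-function inequalities (one from varying the velocity $v$, one from the optimal $y^{*}$) combine via $H^{**}=H$ to give the Barron--Jensen equality. (Two small imprecisions: $H^{*}$ need not be finite on all of $[-L,L]$, only $+\infty$ outside it, which is all you use; and choosing $y=x_n$ gives $\limsup_n u(x_n,t_n)\le\limsup_n u_0(x_n)$, which for merely lsc $u_0$ does not bound by $u_0(x)$ along \emph{any} sequence --- but the generalized initial condition only requires one sequence, e.g.\ the constant one, so the conclusion stands.)

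The uniqueness half, however, has a genuine gap. The Barron--Jensen condition gives you information \emph{only at points where $v-\vp$ attains a local minimum}; at an arbitrary point of the characteristic segment from $(y,0)$ to $(x,t)$ there is no reason for $v$ minus a linear test function to have a local minimum, so ``applying the equality to a test function linear in the direction $(x-y)/t$ to propagate the bound along the segment'' is not a licensed step. The rigorous substitute is a monotonicity-along-trajectories lemma: one must show that the lsc function $s\mapsto v(y+s\,(x-y)/t,\,s)-sH^{*}((x-y)/t)$ is monotone by a one-dimensional viscosity argument (or, as in \cite{ABI} and Barron--Jensen's original work, by approximating $v$ from below by inf-convolutions / continuous solutions and invoking comparison for those, or by Frankowska-type invariance arguments). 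That lemma is the actual content of the uniqueness theorem. Likewise, your reverse inequality rests on the assertion that ``$v$ itself satisfies the same dynamic programming identity,'' which is essentially equivalent to saying that $v$ is the value function --- i.e.\ it assumes the conclusion. As written, the uniqueness argument names the right objects but does not contain the mechanism that makes it work; the linear lower bound and the convexity of $H$ are necessary ingredients, but they do not by themselves force the value-function structure without the trajectory-monotonicity (or approximation) step.
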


\bibliographystyle{acm}
\bibliography{general_HRW}

\begin{thebibliography}{10}

\bibitem{Addario-Berry2019}
{\sc Addario-Berry, L., Cairns, H., Devroye, L., Kerriou, C., and Mitchell, R.}
\newblock Hipster random walks.
\newblock {\em arXiv 1909.07367v1\/} (2019).

\bibitem{ABI}
{\sc Alvarez, O., Barron, E.~N., and Ishii, H.}
\newblock Hopf-{L}ax formulas for semicontinuous data.
\newblock {\em Indiana Univ. Math. J. 48}, 3 (1999), 993--1035.

\bibitem{Bbook}
{\sc Barron, E.~N.}
\newblock Viscosity solutions and analysis in {$L^\infty$}.
\newblock In {\em Nonlinear analysis, differential equations and control
  ({M}ontreal, {QC}, 1998)}, vol.~528 of {\em NATO Sci. Ser. C Math. Phys.
  Sci.} Kluwer Acad. Publ., Dordrecht, 1999, pp.~1--60.

\bibitem{BJ}
{\sc Barron, E.~N., and Jensen, R.}
\newblock Semicontinuous viscosity solutions for {H}amilton-{J}acobi equations
  with convex {H}amiltonians.
\newblock {\em Comm. Partial Differential Equations 15}, 12 (1990), 1713--1742.

\bibitem{chensu}
{\sc Chen, G.-Q., and Su, B.}
\newblock Discontinuous solutions for {H}amilton-{J}acobi equations: uniqueness
  and regularity.
\newblock {\em Discrete Contin. Dyn. Syst. 9}, 1 (2003), 167--192.

\bibitem{users}
{\sc Crandall, M.~G., Ishii, H., and Lions, P.-L.}
\newblock User's guide to viscosity solutions of second order partial
  differential equations.
\newblock {\em Bull. Amer. Math. Soc. (N.S.) 27}, 1 (1992), 1--67.

\bibitem{CL}
{\sc Crandall, M.~G., and Lions, P.-L.}
\newblock Viscosity solutions of {H}amilton-{J}acobi equations.
\newblock {\em Trans. Amer. Math. Soc. 277}, 1 (1983), 1--42.

\bibitem{Crandall1984}
{\sc Crandall, M.~G., and Lions, P.-L.}
\newblock Two approximations of solutions of hamilton-jacobi equations.
\newblock {\em Mathematics of Computation 43}, 167 (sep 1984), 1--1.

\bibitem{evans}
{\sc Evans, L.~C.}
\newblock {\em Partial differential equations}, second~ed., vol.~19 of {\em
  Graduate Studies in Mathematics}.
\newblock American Mathematical Society, Providence, RI, 2010.

\bibitem{MR1707314}
{\sc Kipnis, C., and Landim, C.}
\newblock {\em Scaling limits of interacting particle systems}, vol.~320 of
  {\em Grundlehren der Mathematischen Wissenschaften [Fundamental Principles of
  Mathematical Sciences]}.
\newblock Springer-Verlag, Berlin, 1999.

\bibitem{lionsbook}
{\sc Lions, P.-L.}
\newblock {\em Generalized solutions of {H}amilton-{J}acobi equations}, vol.~69
  of {\em Research Notes in Mathematics}.
\newblock Pitman (Advanced Publishing Program), Boston, Mass.-London, 1982.

\bibitem{MR1081251}
{\sc Oelschl\"{a}ger, K.}
\newblock Large systems of interacting particles and the porous medium
  equation.
\newblock {\em J. Differential Equations 88}, 2 (1990), 294--346.

\end{thebibliography}

\end{document}